\theoremstyle{plain}
  \newtheorem{thm}{Theorem}[section]
  \newtheorem{lem}[thm]{Lemma}
  \newtheorem{prop}[thm]{Proposition}
  \newtheorem{cor}[thm]{Corollary}
\theoremstyle{definition}
  \newtheorem{defn}[thm]{Definition}
  \newtheorem{exmp}[thm]{Example}
  \newtheorem{rem}[thm]{Remark}
\newtheorem*{SA}{Standing Assumption}
\newtheorem*{Discus}{Discussion}
\DeclareMathAlphabet{\mathcal}{OMS}{cmsy}{m}{n}
\DeclareMathOperator{\id}{id}
\DeclareMathOperator{\ob}{ob}
\DeclareMathOperator{\Int}{Int}
\def\ps@pprintTitle{%
 \let\@oddhead\@empty
  \let\@evenhead\@empty
  \def\@oddfoot{\vbox{\hsize=\textwidth\footnotesize
  \vskip 8pt
  \copyright 2020. This manuscript version is made available under the CC-BY-NC-ND 4.0 license \url{https://creativecommons.org/licenses/by-nc-nd/4.0/}. The published version is available at \url{https://doi.org/10.1016/j.fss.2020.01.013}.\\
  }}%
  \let\@evenfoot\@oddfoot}
\newcommand{\da}{\downarrow}
\newcommand{\ua}{\uparrow}
\newcommand{\ra}{\rightarrow}
\newcommand{\lda}{\swarrow}
\newcommand{\rda}{\searrow}
\newcommand{\bs}{\backslash}
\newcommand{\bv}{\bigvee}
\newcommand{\bw}{\bigwedge}
\newcommand{\dv}{\dashv}
\newcommand{\rqa}{\rightsquigarrow}
\newcommand{\lar}{\looparrowright}
\renewcommand{\phi}{\varphi}
\newcommand{\al}{\alpha}
\newcommand{\be}{\beta}
\newcommand{\ga}{\gamma}
\newcommand{\Om}{\Omega}
\newcommand{\CC}{\mathcal{C}}
\newcommand{\CI}{\mathcal{I}}
\newcommand{\CO}{\mathcal{O}}
\newcommand{\CQ}{\mathcal{Q}}
\newcommand{\CR}{\mathcal{R}}
\newcommand{\sB}{{\sf B}}
\newcommand{\sQ}{{\sf Q}}
\newcommand{\sR}{{\sf R}}
\newcommand{\sj}{{\sf j}}
\newcommand{\BB}{{\bf B}}
\newcommand{\BD}{{\bf D}}
\newcommand{\Cat}{{\bf Cat}}
\newcommand{\Rel}{{\bf Rel}}
\newcommand{\Sup}{{\bf Sup}}
\newcommand{\QCat}{\CQ\text{-}\Cat}
\newcommand{\RCat}{\CR\text{-}\Cat}
\newcommand{\op}{{\rm op}}
\newcommand{\BsQ}{\BB(\sQ)}
\newcommand{\KsQ}{\BB_*(\sQ)}
\newcommand{\KC}{\BB_*(C_3)}
\newcommand{\DsQ}{\BD(\sQ)}
\newcommand{\HsQ}{\BD_*(\sQ)}
\newcommand{\HssQ}{\BD_*(\sQ)^{\circ}}
\newcommand{\HC}{\BD_*(C_3)}
\newcommand{\OX}{\CO(X)}
\newcommand{\PCX}{{\sf PC}(X)}
\newcommand{\ldd}{\mathrel{/}}
\newcommand{\rdd}{\mathrel{\bs}}
\newcommand{\with}{\mathrel{\&}}
\renewcommand{\leq}{\leqslant}
\renewcommand{\geq}{\geqslant}
\numberwithin{equation}{section}
\begin{document}

\begin{frontmatter}



\title{Quantale-valued dissimilarity}


\author[S]{Hongliang Lai}
\ead{hllai@scu.edu.cn}

\author[S]{Lili Shen\corref{cor}}
\ead{shenlili@scu.edu.cn}

\author[P]{Yuanye Tao}
\ead{tyymath@foxmail.com}

\author[S]{Dexue Zhang}
\ead{dxzhang@scu.edu.cn}

\cortext[cor]{Corresponding author.}
\address[S]{School of Mathematics, Sichuan University, Chengdu 610064, China}
\address[P]{Ping An Bank Co., Ltd., Shenzhen 518001, China}

\begin{abstract}
Inspired by the theory of apartness relations of Scott, we establish a positive theory of dissimilarity valued in an involutive quantale $\mathsf{Q}$ without the aid of negation. It is demonstrated that a set equipped with a $\mathsf{Q}$-valued dissimilarity is precisely a symmetric category enriched in a subquantaloid of the quantaloid of back diagonals of $\mathsf{Q}$. Interactions between $\mathsf{Q}$-valued dissimilarities and $\mathsf{Q}$-valued similarities (which are equivalent to $\mathsf{Q}$-valued equalities in the sense of H{\"o}hle--Kubiak) are investigated with the help of lax functors. In particular, it is shown that similarities and dissimilarities are interdefinable if $\mathsf{Q}$ is a Girard quantale with a hermitian and cyclic dualizing element.
\end{abstract}

\begin{keyword}
Dissimilarity \sep Similarity \sep Quantale \sep Back diagonal \sep Diagonal \sep Quantaloid \sep Girard quantale


\MSC[2010] 03E72 \sep 06F07 \sep 18D20 \sep 03B20
\end{keyword}

\end{frontmatter}




\section{Introduction}

In order to explain the motivation and the purpose of this paper, we start with a well-known example proposed by Fourman--Scott \cite{Fourman1979}. Given a topological space $X$, let $\OX$ be the frame of open sets of $X$, and let
\begin{equation} \label{PCX-def}
\PCX=\{f\mid f\ \text{is a real-valued continuous map on an open subset}\ D(f):=U\subseteq X\}.
\end{equation}
For any $f,g\in\PCX$, the value
\begin{equation} \label{sim-PCX}
\al(f,g):=\Int\{x\in D(f)\cap D(g)\mid f(x)=g(x)\},
\end{equation}
i.e., the interior of the set
\[\{x\in D(f)\cap D(g)\mid f(x)=g(x)\}\]
in $X$, may be treated as the truth-value, computed in the frame $\OX$, of the statement that \emph{$f$ is equal to $g$}. The pair $(\PCX,\al)$ is a prototype of \emph{frame-valued sets} in the sense of Fourman--Scott \cite{Fourman1979} and Higgs \cite{Higgs1970,Higgs1984}. Explicitly, considering a frame $\Om$ as the table of truth-values, an \emph{$\Om$-set} is a set $A$ that comes equipped with a map
\[\al:A\times A\to\Om\]
such that
\begin{itemize}
\item (symmetry)  $\al(x,y)=\al(y,x)$,
\item (transitivity)  $\al(y,z)\wedge\al(x,y)\leq\al(x,z)$
\end{itemize}
for all $x,y,z\in A$, where $\al(x,y)$ is interpreted as the truth-value that $x$ is \emph{similar} (or \emph{equal}, or \emph{equivalent}) to $y$. It is well known that the category of  $\Om$-sets is equivalent to the topos ${\sf Sh}(\Om)$ of sheaves over $\Om$ \cite{Fourman1979}.

As a dualization of \eqref{sim-PCX}, it is natural to consider the value
\begin{equation} \label{dissim-PCX}
\be(f,g):=\Int(X-\Int\{x\in D(f)\cap D(g)\mid f(x)=g(x)\})
\end{equation}
for any $f,g\in \PCX$, i.e., the interior of the complement of the interior of the set
\[\{x\in D(f)\cap D(g)\mid f(x)=g(x)\}\]
in $X$, as the truth-value of the statement that $f$ is \emph{dissimilar} (or \emph{unequal}, or \emph{inequivalent}) to $g$ (also computed in the frame $\OX$). In other words, $\be$ may be thought of as an $\OX$-valued \emph{dissimilarity} on the set $\PCX$.

In classical logic, with the law of double negation in our arsenal, a dissimilarity (or inequivalence) relation on a set may be postulated as the complement (or negation) of a similarity (or equivalence) relation; that is, similarity and dissimilarity are interdefinable in classical logic. However, in a non-classical logic, e.g., intuitionistic logic and many-valued logic, the law of double negation may fail, and thus similarity and dissimilarity may not be deduced from each other via negation. In the 1970s, Scott \cite{Scott1979} pointed out that an independent \emph{positive} theory of inequalities is required in intuitionistic logic. To achieve this, he established the theory of \emph{apartness relations}.

During the past decades, different approaches have been adopted in the search for a reasonable definition of dissimilarities in the many-valued setting; we refer to \cite{Cross2002,Dubois1980} for an overview. Some typical approaches are listed below, which are all defined through some variations of similarity:
\begin{itemize}
\item A dissimilarity relation is assumed to be the ``fuzzy complement'' (also called ``inverse'') of a similarity relation; see, e.g., \cite{Nakamura1982,Zwick1987}.
\item A dissimilarity between fuzzy sets is postulated as a similarity between their ``fuzzy complements'' \cite{Dubois1982c}.
\item A dissimilarity is defined as an analogue of the distance in a metric space \cite{Restle1959,Ruspini1973}.
\end{itemize}

The aim of this paper is to establish a \emph{positive} theory of dissimilarity valued in an \emph{involutive quantale} \cite{Mulvey1992}
\[\sQ=(\sQ,\with,k,{}^{\circ});\]
that is, our notion of dissimilarity will not be postulated as a ``negation'' or a ``complement'' of that of similarity. It should be noted that our notion of \emph{$\sQ$-valued dissimilarity}, in spite of being motivated by the theory of Scott \cite{Scott1979}, is conceptually different from his notion of apartness relation. In particular, the map $\be$ given by \eqref{dissim-PCX} is an $\OX$-valued dissimilarity on $\PCX$, but in general it is not an $\OX$-valued apartness relation on $\PCX$.

The paper is structured as follows. Section \ref{Quantales} reviews some basic notions about involutive quantales. Section \ref{Sim-Dissim-Def} presents the key notion of this paper, i.e., that of \emph{$\sQ$-valued dissimilarity}. $\sQ$-valued dissimilarity is a ``dualization'' of \emph{$\sQ$-valued similarity}, which originates from a series of works of H{\"o}hle and his collaborators \cite{Hoehle1992,Hoehle1995b,Hoehle1998,Hoehle2005,Hoehle1991}. Semantic meanings of the axioms of these two notions are analyzed in this section.

It is known from \cite{Hoehle2011a} that a set equipped with a $\sQ$-valued similarity is exactly a \emph{symmetric} category enriched in the quantaloid $\HsQ$, which is a subquantaloid of the quantaloid $\DsQ$ of \emph{diagonals} of $\sQ$ \cite{Hoehle2011a,Pu2012,Stubbe2014}. Section \ref{Sim-Dissim-Cat} shows that a dual conclusion holds for $\sQ$-valued similarities. Explicitly, it is demonstrated that a set equipped with a $\sQ$-valued dissimilarity is precisely a \emph{symmetric} category enriched in $\KsQ$, where $\KsQ$ is a subquantaloid of the quantaloid $\BsQ$ of \emph{back diagonals} of $\sQ$ \cite{Shen2016a}. Therefore, $\sQ$-valued similarity and $\sQ$-valued dissimilarity are both instances of the thesis of Lawvere \cite{Lawvere1973} that \emph{fundamental structures are themselves categories}.

Based on the categorical perspective, in Section \ref{Sim-vs-Dissim} we investigate the connections between $\sQ$-valued similarities and $\sQ$-valued dissimilarities by constructing \emph{lax functors} between the quantaloids $\HsQ$ and $\KsQ$, which are deeply affected by the structure of the quantale $\sQ$:
\begin{itemize}
\item If $\sQ$ is a divisible quantale with the bottom element being cyclic, then the negations of $\sQ$-valued dissimilarities are $\sQ$-valued similarities (Proposition \ref{divisible-neg-dissim}).
\item If $\sQ$ is a frame, then the negations of $\sQ$-valued dissimilarities are $\sQ$-valued similarities, and vice versa (Proposition \ref{frame-neg-sim-dissim}).
\end{itemize}
Furthermore, we confirm the intuition that similarity and dissimilarity are interdefinable when $\sQ$ satisfies the law of double negation. Explicitly, if $\sQ$ is a Girard quantale \cite{Rosenthal1990,Yetter1990}, then we have isomorphisms
\[\DsQ\cong\BsQ\quad\text{and}\quad\HsQ\cong\KsQ\]
of quantaloids (Theorem \ref{Girard-iso}); and moreover, if $\sQ$ is an involutive Girard quantale with a hermitian and cyclic dualizing element, then $\sQ$-valued similarity and $\sQ$-valued dissimilarity are fully decidable by each other (Theorem \ref{Girard-sim-dissim}). Conversely, for a commutative quantale $\sQ$, it is shown in Theorem \ref{DQ-BQ-Girard} that the existence of an isomorphism $\DsQ\cong\BsQ$ of quantaloids necessarily forces $\sQ$ to be a Girard quantale; hence, if $\sQ$ is commutative and integral, then
\[\HsQ\cong\KsQ\iff\sQ\ \text{is a Girard quantale},\]
which is recorded as Corollary \ref{DQ-BQ-Girard-integral}.

\section{Quantales} \label{Quantales}

A \emph{(unital) quantale} \cite{Mulvey1986,Rosenthal1990}
\[\sQ=(\sQ,\with,k)\]
is a monoid with $k$ being the unit, such that the underlying set $\sQ$ is a complete lattice (with a top element $\top$ and a bottom element $\bot$) and the multiplication $\with$ distributes over arbitrary suprema, i.e.,
\[p\with\Big(\bv_{i\in I}q_i\Big)=\bv_{i\in I}p\with q_i\quad\text{and}\quad\Big(\bv_{i\in I}p_i\Big)\with q=\bv_{i\in I}p_i\with q\]
for all $p,q,p_i,q_i\in\sQ$ $(i\in I)$. The induced right adjoints
\[(-\with q)\dv(-\ldd q):\ \sQ\to\sQ\quad\text{and}\quad(p\with -)\dv(p\rdd -):\ \sQ\to\sQ,\]
called \emph{left} and \emph{right implications} in $\sQ$, are given by
\begin{equation} \label{imp-def}
r\ldd q=\bv\{p'\in\sQ\mid p'\with q\leq r\}\quad\text{and}\quad p\rdd r=\bv\{q'\in\sQ\mid p\with q'\leq r\},
\end{equation}
respectively, which satisfy
\[p\with q\leq r\iff p\leq r\ldd q\iff q\leq p\rdd r\]
for all $p,q,r\in\sQ$. We say that
\begin{itemize}
\item $\sQ$ is \emph{commutative}, if $p\with q=q\with p$ for all $p,q\in\sQ$, in which case we write
    \[p\ra q:=q\ldd p=p\rdd q\]
    for all $p,q\in\sQ$;
\item $\sQ$ is \emph{integral}, if the unit $k=\top$, the top element of the complete lattice $\sQ$;
\item $\sQ$ is \emph{divisible}, if
    \begin{equation} \label{divisible-def}
    (u\ldd q)\with q=u=q\with(q\rdd u)
    \end{equation}
    whenever $u\leq q$ in $\sQ$, in which case $\sQ$ is necessarily integral. 
\item $\sQ$ is a \emph{complete MV-algebra} \cite{Chang1958}, if $\sQ$ is commutative and
    \begin{equation} \label{MV-def}
    (p\ra q)\ra q=p\vee q
    \end{equation}
    for all $p,q\in\sQ$, in which case $\sQ$ is necessarily divisible (cf. \cite[Lemma 2.5]{Galatos2005}).
\item $\sQ$ is \emph{involutive} \cite{Mulvey1992}, if there exists an \emph{involution} on $\sQ$; that is, a map $(-)^{\circ}:\sQ\to\sQ$ such that
    \[k^{\circ}=k,\quad q^{\circ\circ}=q,\quad (p\with q)^{\circ} = q^{\circ}\with p^{\circ}\quad\text{and}\quad \Big(\bv_{i\in I}q_i\Big)^{\circ}=\bv_{i\in I}q_i^{\circ}\]
    for all $p,q,q_i\in\sQ$ $(i\in I)$. In this case,
    \begin{itemize}
    \item it is easy to verify that
        \begin{equation} \label{invo-imp}
        (p\ldd q)^{\circ}= q^{\circ}\rdd p^{\circ}
        \end{equation}
        for all $p,q\in\sQ$;
    \item an element $q\in\sQ$ is called \emph{hermitian} (also \emph{self-adjoint}) if $q^{\circ}=q$, and $k$, $\top$, $\bot$ are clearly hermitian.
    \end{itemize}
\end{itemize}

\begin{exmp} \label{quantale-exmp}
We list here some quantales that are of concern in this paper:
\begin{enumerate}[label=(\arabic*)]
\item \label{quantale-exmp:Lawvere} Lawvere's quantale $[0,\infty]=([0,\infty],+,0)$ \cite{Lawvere1973} is commutative and divisible, where $[0,\infty]$ is the extended non-negative real line equipped with the order ``$\geq$'' (so that $0$ becomes the top element and $\infty$ the bottom element), and ``$+$'' is the usual addition extended via
    \[p+\infty=\infty+p=\infty\]
    to $[0,\infty]$, with $0$ being the unit and making $[0,\infty]$ a commutative and integral quantale. The implication in $[0,\infty]$ is given by
    \[p\ra q=\begin{cases}
    q-p & \text{if}\ p<q,\\
    0 & \text{else}
    \end{cases}\]
    for all $p,q\in[0,\infty]$, where the subtraction ``$-$'' is extended via
    \[\infty-p=\begin{cases}
    \infty & \text{if}\ p<\infty,\\
    0 & \text{if}\ p=\infty
    \end{cases}\]
    to $[0,\infty]$.
\item \label{quantale-exmp:frame} Every \emph{frame} $\Om=(\Om,\wedge,\top)$ is a commutative, divisible and idempotent quantale, and vice versa. In particular, the two-element Boolean algebra, denoted by ${\bf 2}$, is a frame. Moreover, each topological space $X$ gives rise to the frame $\OX=(\OX,\cap,X)$ of open sets of $X$.
\item \label{quantale-exmp:t-norm} Every \emph{complete BL-algebra} \cite{Hajek1998} is a commutative and divisible quantale. In particular, the unit interval $[0,1]$ equipped with a \emph{continuous t-norm} \cite{Klement2000} is a commutative and divisible quantale.
\item \label{quantale-exmp:nil-min} The unit interval $[0,1]$ equipped with the \emph{nilpotent minimum t-norm} \cite{Klement2000} is a commutative, integral and non-divisible quantale.
\item \label{quantale-exmp:C3} The three-chain $C_3=\{\bot,k,\top\}$ is equipped with a commutative and non-integral quantale structure $(C_3,\with,k)$, with
    \[\top\with\top=\top\ra\top=\top,\quad\top\ra\bot=\top\ra k=\bot\]
    and the other multiplications\,/\,implications being trivial.
\item \label{quantale-exmp:Rel} Let $\Rel(X)$ denote the set of (binary) relations on a non-empty set $X$. Then $(\Rel(X),\circ,\id_X)$ is an involutive quantale, where $\circ$ refers to the composition of relations, and
    \[\id_X=\{(x,x)\mid x\in X\}\]
    is the identity relation on $X$. It is obvious that the \emph{opposite} $R^{\circ}$ of relations $R\in\Rel(X)$, i.e.,
    \[R^{\circ}=\{(y,x)\in X\times X\mid(x,y)\in R\},\]
    defines an involution on $\Rel(X)$. Note that $\Rel(X)$ is non-commutative and non-integral as long as $X$ contains at least two elements.
\item \label{quantale-exmp:Sup} Let $\Sup[0,1]$ denote the set of $\sup$-preserving maps on the unit interval $[0,1]$. Then $(\Sup[0,1],\circ,1_{[0,1]})$ is a non-commutative, non-integral and unital quantale, where $\circ$ refers to the composition of maps, and $1_{[0,1]}$ is the identity map on $[0,1]$. An involution on $\Sup[0,1]$ is given by
    \[f^{\circ}:[0,1]\to[0,1],\quad f^{\circ}(x)=1-f^{\star}(1-x)\]
    for all $f\in\Sup[0,1]$, where $f^{\star}:[0,1]\to[0,1]$ is the right adjoint of $f$.
\item \label{quantale-exmp:c-inv} Every commutative quantale $\sQ$ is involutive, with a trivial involution given by the identity map on $\sQ$. In particular, all the commutative quantales mentioned in \ref{quantale-exmp:Lawvere}--\ref{quantale-exmp:C3} are involutive.
\end{enumerate}
\end{exmp}

\begin{SA}
Throughout this paper, we fix an involutive quantale
\[\sQ=(\sQ,\with,k,{}^{\circ})\]
as the table of truth-values, unless otherwise specified.
\end{SA}

\section{Quantale-valued similarity and dissimilarity: Definitions and examples} \label{Sim-Dissim-Def}

In order to throw light on the postulation of dissimilarity, let us recall the notion of \emph{$\sQ$-valued similarity}\footnote{A map $\al:X\times X\to\sQ$ is a $\sQ$-valued similarity in the sense of Definition \ref{sim-def} if, and only if,
\[\overline{\alpha}(x,y):=\al(y,x)\]
defines a $\sQ$-valued equality in the sense of H\"{o}hle-Kubiak (see \cite[Definition 2.1]{Hoehle2011a}). So, $\sQ$-valued similarity and $\sQ$-valued equality are equivalent concepts.} :

\begin{defn} \label{sim-def} (cf. \cite[Definition 2.1 and Lemma 2.3]{Hoehle2011a}.)
A \emph{$\sQ$-valued similarity} on a set $X$ is a map
\[\al:X\times X\to\sQ\]
such that
\begin{enumerate}[label=(S\arabic*)]
\item \label{sim-def:str} (strictness) \ $\al(x,y)\leq\al(x,x)\wedge\al(y,y)$,
\item \label{sim-def:sym} (symmetry) \ $\al(x,y)=\al(y,x)^{\circ}$,
\item \label{sim-def:div} (divisibility) \ $\al(x,y)=(\al(x,y)\ldd\al(x,x))\with\al(x,x)$,
\item \label{sim-def:tran} (transitivity) \ $(\al(y,z)\ldd\al(y,y))\with\al(x,y)\leq\al(x,z)$
\end{enumerate}
for all $x,y,z\in X$.
\end{defn}

Note that $\al(x,x)$ is hermitian for all $x\in X$ by \ref{sim-def:sym}. Moreover, in the presence of \ref{sim-def:sym}, the axiom \ref{sim-def:div} of divisibility implies that $\al(x,y)=\al(y,y)\with(\al(y,y)\rdd\al(x,y))$ (see \cite[Lemma 2.3]{Hoehle2011a}), and the axiom \ref{sim-def:tran} implies that $\al(y,z)\with(\al(y,y)\rdd\al(x,y))\leq\al(x,z)$.

An easy analysis of the axioms in Definition \ref{sim-def} tells us that \ref{sim-def:str} is implied by \ref{sim-def:div} if $\sQ$ is integral, and \ref{sim-def:str} is equivalent to \ref{sim-def:div} if $\sQ$ is divisible:

\begin{prop} \label{sim-divisible} (See \cite{Hoehle2011a}.)
If $\sQ$ is a divisible quantale, then the axiom of strictness is equivalent to the axiom of divisibility. Hence, a map $\al:X\times X\to\sQ$ defines a $\sQ$-valued similarity on a set $X$ if, and only if,
\begin{enumerate}[label={\rm(\arabic*)}]
\item \label{sim-divisible:str} $\al(x,y)\leq\al(x,x)\wedge\al(y,y)$,
\item \label{sim-divisible:sym} $\al(x,y)=\al(y,x)^{\circ}$,
\item \label{sim-divisible:tran} $(\al(y,z)\ldd\al(y,y))\with\al(x,y)\leq\al(x,z)$
\end{enumerate}
for all $x,y,z\in X$.
\end{prop}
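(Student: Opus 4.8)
The plan is to show that, under divisibility of $\sQ$, the axioms \ref{sim-def:str} and \ref{sim-def:div} are equivalent (symmetry \ref{sim-def:sym} being available from the definition), after which the ``hence'' clause follows by simply deleting \ref{sim-def:div} from the list. Two standing facts will drive the argument. First, in any quantale and for all $u,q\in\sQ$ one has the universal inequality $(u\ldd q)\with q\leq u$, obtained by distributing $\with q$ over the supremum defining $u\ldd q$ in \eqref{imp-def}; consequently \ref{sim-def:div} is equivalent to the reverse inequality $\al(x,y)\leq(\al(x,y)\ldd\al(x,x))\with\al(x,x)$. Second, divisibility forces $\sQ$ to be integral, so that $k=\top$ and hence $p\with q\leq p\wedge q$ for all $p,q\in\sQ$, by monotonicity of $\with$ together with $p,q\leq k$.

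For the implication \ref{sim-def:str}$\Rightarrow$\ref{sim-def:div} I would set $u:=\al(x,y)$ and $q:=\al(x,x)$. Strictness \ref{sim-def:str} gives $u\leq\al(x,x)\wedge\al(y,y)\leq q$, so the divisibility identity \eqref{divisible-def} applies and yields $(u\ldd q)\with q=u$, which is exactly \ref{sim-def:div}. This direction invokes neither symmetry nor transitivity, so it already settles the ``hence'': any map satisfying strictness, symmetry and transitivity automatically satisfies \ref{sim-def:div} and is therefore a $\sQ$-valued similarity, while the converse inclusion is trivial.

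For the converse \ref{sim-def:div}$\Rightarrow$\ref{sim-def:str} I would first apply integrality to \ref{sim-def:div} at the pair $(x,y)$, obtaining $\al(x,y)=(\al(x,y)\ldd\al(x,x))\with\al(x,x)\leq\al(x,x)$, and likewise at $(y,x)$, obtaining $\al(y,x)\leq\al(y,y)$. Taking involutions in the second inequality and invoking symmetry \ref{sim-def:sym}, which gives $\al(x,y)=\al(y,x)^{\circ}$ and renders $\al(y,y)$ hermitian, I get $\al(x,y)\leq\al(y,y)^{\circ}=\al(y,y)$. Meeting the two bounds produces $\al(x,y)\leq\al(x,x)\wedge\al(y,y)$, i.e. \ref{sim-def:str}.

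The only genuinely non-formal step is this last one: axiom \ref{sim-def:div} constrains $\al(x,y)$ only against the ``$x$-diagonal'' $\al(x,x)$, so the strictness bound against $\al(y,y)$ cannot be recovered from \ref{sim-def:div} in isolation and must be routed through symmetry \ref{sim-def:sym} and the resulting hermitian identity $\al(y,y)^{\circ}=\al(y,y)$. Everything else is a routine combination of the universal implication inequality with integrality, so I expect no further obstacle.
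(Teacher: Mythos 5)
Your proposal is correct and takes essentially the same route as the paper, which offers no separate proof beyond the remark following Definition \ref{sim-def}: under divisibility (hence integrality), axiom \ref{sim-def:div} gives $\al(x,y)\leq\al(x,x)$, symmetry \ref{sim-def:sym} transfers this bound to $\al(y,y)$ via the hermitian identity, and conversely strictness \ref{sim-def:str} lets one apply the divisibility identity \eqref{divisible-def} with $u=\al(x,y)$, $q=\al(x,x)$ to recover \ref{sim-def:div}. Your explicit flagging that symmetry is indispensable for the direction \ref{sim-def:div}$\Rightarrow$\ref{sim-def:str} is precisely the point left implicit in the paper's treatment.
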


Moreover, both \ref{sim-def:str} and \ref{sim-def:div} are subsumed by \ref{sim-def:sym} and \ref{sim-def:tran} when $\sQ$ is a frame, in which case a set with a $\sQ$-valued similarity is exactly a frame-valued set in the sense of Fourman--Scott \cite{Fourman1979} and Higgs \cite{Higgs1970,Higgs1984}:

\begin{prop} \label{sim-frame} (See \cite{Fourman1979}.)
If $\sQ$ is a frame, then a map $\al:A\times A\to\sQ$ defines a $\sQ$-valued similarity on a set $A$ if, and only if,
\begin{enumerate}[label={\rm(\arabic*)}]
\item \label{sim-frame:sym} $\al(x,y)=\al(y,x)$,
\item \label{sim-frame:tran} $\al(y,z)\wedge\al(x,y)\leq\al(x,z)$
\end{enumerate}
for all $x,y,z\in A$.
\end{prop}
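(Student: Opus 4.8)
The plan is to lean on Proposition~\ref{sim-divisible}: since every frame is a divisible quantale (Example~\ref{quantale-exmp}\ref{quantale-exmp:frame}), the four defining axioms \ref{sim-def:str}--\ref{sim-def:tran} already collapse to the three conditions \ref{sim-divisible:str}--\ref{sim-divisible:tran}, i.e.\ strictness \ref{sim-def:str}, symmetry \ref{sim-def:sym}, and the implication-form transitivity \ref{sim-def:tran} (divisibility \ref{sim-def:div} being automatic). It then suffices to show that, in a frame, these three are jointly equivalent to plain symmetry \ref{sim-frame:sym} and plain transitivity \ref{sim-frame:tran}. Throughout I would use the frame identities $\with=\wedge$, $k=\top$, and the trivial involution $q^{\circ}=q$ (Example~\ref{quantale-exmp}\ref{quantale-exmp:c-inv}), so that \ref{sim-def:sym} is literally the statement \ref{sim-frame:sym}.

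For the forward direction, assume \ref{sim-def:str}, \ref{sim-def:sym}, \ref{sim-def:tran}. Condition \ref{sim-frame:sym} is immediate. To obtain \ref{sim-frame:tran}, I would first note that strictness gives $\al(y,z)\le\al(y,y)$, so $\al(y,z)\wedge\al(y,y)=\al(y,z)$, whence the definition of $\ldd$ in \eqref{imp-def} yields $\al(y,z)\le\al(y,z)\ldd\al(y,y)$. Meeting with $\al(x,y)$ and invoking \ref{sim-def:tran} gives $\al(y,z)\wedge\al(x,y)\le(\al(y,z)\ldd\al(y,y))\wedge\al(x,y)\le\al(x,z)$, which is \ref{sim-frame:tran}.

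For the converse, assume only \ref{sim-frame:sym} and \ref{sim-frame:tran}. Strictness is recovered by specialising \ref{sim-frame:tran} at $z=x$: symmetry turns $\al(y,x)\wedge\al(x,y)\le\al(x,x)$ into $\al(x,y)\le\al(x,x)$, and the symmetric argument gives $\al(x,y)=\al(y,x)\le\al(y,y)$, so \ref{sim-def:str} holds. The step I expect to be the crux is deducing the implication-form transitivity \ref{sim-def:tran} from the weaker plain transitivity, since its left-hand side replaces $\al(y,z)$ by the larger $\al(y,z)\ldd\al(y,y)$. Here I would combine the just-proved absorption $\al(x,y)=\al(x,y)\wedge\al(y,y)$ with the counit inequality $(\al(y,z)\ldd\al(y,y))\wedge\al(y,y)\le\al(y,z)$, valid in any quantale, to compute
\[(\al(y,z)\ldd\al(y,y))\wedge\al(x,y)=\big((\al(y,z)\ldd\al(y,y))\wedge\al(y,y)\big)\wedge\al(x,y)\le\al(y,z)\wedge\al(x,y)\le\al(x,z),\]
the last inequality being \ref{sim-frame:tran}. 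Since \ref{sim-def:sym} coincides with \ref{sim-frame:sym}, all three reduced axioms hold.

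The only points worth double-checking are that the frame's divisibility genuinely licenses the appeal to Proposition~\ref{sim-divisible}, and that the trivial involution makes \ref{sim-def:sym} and \ref{sim-frame:sym} the same statement; with these in place the argument is a short chain of meet manipulations, the absorption trick $\al(x,y)=\al(x,y)\wedge\al(y,y)$ carrying the essential weight in both directions.
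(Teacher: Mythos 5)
Your proof is correct, but note that the paper itself offers no argument for this proposition: it is stated with a citation to Fourman--Scott \cite{Fourman1979}, the only hint of a route being the remark just before it that \ref{sim-def:str} and \ref{sim-def:div} are ``subsumed by'' \ref{sim-def:sym} and \ref{sim-def:tran} when $\sQ$ is a frame. Your argument is therefore a genuine reconstruction rather than a retracing of the paper. The reduction via Proposition~\ref{sim-divisible} is legitimate, since frames are divisible by Example~\ref{quantale-exmp}\ref{quantale-exmp:frame}, and you are right that the statement must be read with the trivial involution of Example~\ref{quantale-exmp}\ref{quantale-exmp:c-inv} (the unstarred symmetry condition \ref{sim-frame:sym} only matches \ref{sim-def:sym} under that convention). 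Two small observations. First, in the forward direction your appeal to strictness is superfluous: $p\wedge q\leq p$ already gives $p\leq p\ldd q$ in any quantale whose multiplication is $\wedge$, so plain transitivity \ref{sim-frame:tran} drops out of \ref{sim-def:tran} with no help from \ref{sim-def:str}. Second, the converse is where the real content lies, and you have it right: specialising \ref{sim-frame:tran} at $z=x$ together with symmetry recovers strictness, and then the absorption $\al(x,y)=\al(x,y)\wedge\al(y,y)$ combined with the counit inequality $(\al(y,z)\ldd\al(y,y))\wedge\al(y,y)\leq\al(y,z)$ upgrades plain transitivity to the implication-form axiom \ref{sim-def:tran}. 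This is exactly the verification that the paper's preceding remark asserts but never carries out, so your proposal fills that gap cleanly.
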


\begin{Discus}[of the axioms of $\sQ$-valued similarity]
Let $\al:X\times X\to\sQ$ be a $\sQ$-valued similarity.
\begin{itemize}
\item[\ref{sim-def:str}] The value
    \[\al(x,y)\]
     is understood as the truth-value of the statement that \emph{$x$ is similar to $y$}. Since each entity is supposed to be similar to itself as long as it exists (or, once it is defined), the value $\al(x,x)$ may be understood as the \emph{extent of existence} \cite{Fourman1979,Hoehle2011a} of $x$. The axiom of strictness then indicates that each entity is more similar to itself than to any other one.
\item[\ref{sim-def:sym}] Similarity is symmetric; that is, if $x$ is similar to $y$, then $y$ is similar to $x$.
\item[\ref{sim-def:div}] The value
    \[\al(x,y)\ldd\al(x,x)\]
    measures to what extent the existence of $x$ forces $x$ to be similar to $y$. The equation
    \[(\al(x,y)\ldd\al(x,x))\with\al(x,x)=\al(x,y)\]
    says that $x$ is similar to $y$ if, and only if, $x$ has been proved to exist and the existence of $x$ forces $x$ to be similar to $y$.
\item[\ref{sim-def:tran}] The inequality
    \[(\al(y,z)\ldd\al(y,y))\with\al(x,y) \leq\al(x,z)\]
    says that if $x$ is similar to $y$, and if the existence of $y$ forces $y$ to be similar to $z$, then $x$ is similar to $z$. So, this axiom refers to the transitivity of similarity.
\end{itemize}
\end{Discus}

\begin{rem} \label{Q-preorder}
A map $\al:X\times X\to\sQ$ satisfying \ref{sim-def:div} and \ref{sim-def:tran} actually defines a \emph{$\sQ$-preorder} on the \emph{$\sQ$-subset} $(X,\mu)$ with
\[\mu:X\to\sQ,\quad\mu(x)=\al(x,x);\]
see \cite{GutierrezGarcia2018,Pu2012}. Hence, $\sQ$-valued similarities are a special kind of $\sQ$-preordered $\sQ$-subsets. In particular, if
\[\al(x,x)=k\]
for all $x\in X$, then $(X,\al)$ reduces to a $\sQ$-preorder on the (crisp) set $X$; see, e.g., \cite{Bvelohlavek2004,Hoehle2015,Lai2006}.
\end{rem}

\begin{rem} \label{indistinguishability}
If $\sQ$ is an integral quantale, let $\al:X\times X\to\sQ$ be a map with
\[\al(x,x)=k=\top\]
for all $x\in X$. Then $\al$ is a $\sQ$-valued similarity on $X$ if, and only if,
\begin{enumerate}[label=(\arabic*)]
\item (symmetry) \ $\al(x,y)=\al(y,x)^{\circ}$,
\item (transitivity) \ $\al(y,z)\with\al(x,y)\leq\al(x,z)$
\end{enumerate}
for all $x,y,z\in X$. Hence, for an integral quantale $\sQ$, $\sQ$-valued similarities $\al$ with $\al(x,x)=k=\top$ for all $x\in X$ generalize \emph{probabilistic relations} in the sense of Menger \cite{Menger1951}, \emph{similarity relations} in the sense of Zadeh \cite{Zadeh1971}, \emph{likeness relations} in the sense of Ruspini \cite{Ruspini1982} and \emph{indistinguishability operators} in the sense of Trillas--Valverde \cite{Trillas1984,Valverde1985}.
\end{rem}

\begin{exmp} \label{sim-2}
For $\sQ={\bf 2}$, a ${\bf 2}$-valued similarity $\al$ on a set $X$ is just an equivalence relation on a subset of $X$. Explicitly,
\[\{(x,y)\in X\times X\mid \al(x,y)=1\}\]
is an equivalence relation on the subset $\{x\mid\al(x,x)=1\}$ consisting of elements that ``have been defined''.
\end{exmp}

\begin{exmp}[Guiding example] \label{sim-exmp-PCX}
Let $X$ be a topological space. Then it follows from Proposition \ref{sim-frame} that the map
\[\al: \PCX\times \PCX\to\OX\]
given by Equation \eqref{sim-PCX} is an $\OX$-valued similarity on the set $\PCX$ of partially defined real-valued continuous maps on $X$ (see Equation \eqref{PCX-def}).
\end{exmp}

\begin{exmp} \label{sim-exmp-real}
Every \emph{partial metric space} (cf. \cite[Definition 3.1]{Matthews1994} and \cite[Definition 2]{Bukatin2009}) is a $[0,\infty]$-valued similarity. A \emph{generalized} partial metric space (cf. \cite[Example 3.10]{Pu2012} and \cite[Example 2.14]{Stubbe2014}) becomes a $[0,\infty]$-valued similarity whenever it is symmetric. Explicitly, a (generalized) partial metric space is a set $X$ equipped with a map
\[\al:X\times X\to[0,\infty]\]
such that
\[\al(x,x)\vee \al(y,y)\leq \al(x,y)\quad\text{and}\quad\al(x,z)\leq\al(y,z)-\al(y,y)+\al(x,y)\]
for all $x,y,z\in X$. As a concrete instance of such examples, let
\[\CI=\{[a,b]\mid 0\leq a<b\leq\infty\}\]
be the set of closed intervals contained in $[0,\infty]$.  Then
\[\al([a,b],[c,d])=b\vee d-a\wedge c\]
defines a $[0,\infty]$-valued similarity on $\CI$; that is, $(\CI,\al)$ is a symmetric (generalized) partial metric space.
\end{exmp}

Now we are ready to present the key notion of this paper:

\begin{defn} \label{dissim-def}
A \emph{$\sQ$-valued dissimilarity} on a set $X$ is a map
\[\be:X\times X\to\sQ\]
such that
\begin{enumerate}[label=(D\arabic*)]
\item \label{dissim-def:str} (strictness) \ $\be(x,y)\geq\be(x,x)\vee\be(y,y)$,
\item \label{dissim-def:sym} (symmetry) \ $\be(x,y)=\be(y,x)^{\circ}$,
\item \label{dissim-def:reg} (regularity) \ $\be(x,y)=\be(x,x)\ldd(\be(x,y)\rdd \be(x,x))$,
\item \label{dissim-def:tran} (contrapositive transitivity) \ $\be(x,z)\leq\be(x,y)\ldd(\be(y,z)\rdd\be(y,y))$
\end{enumerate}
for all $x,y,z\in X$.
\end{defn}

Note that $\be(x,x)$ is hermitian for all $x\in X$ by \ref{dissim-def:sym}. Moreover, in the presence of \ref{dissim-def:sym}, the axiom \ref{dissim-def:reg} implies that $\be(x,y)=(\be(y,y)\ldd\be(x,y))\rdd \be(y,y)$, and the axiom \ref{dissim-def:tran} implies that $\be(x,z)\leq(\be(y,y)\ldd\be(x,y))\rdd\be(y,z)$. With a direct computation it is easy to see that \ref{dissim-def:str} is implied by \ref{dissim-def:reg} if $\sQ$ is integral, and \ref{dissim-def:str} is equivalent to \ref{dissim-def:reg} if $\sQ$ is a complete MV-algebra.

\begin{Discus}[of the axioms of $\sQ$-valued dissimilarity]
Let $\be:X\times X\to\sQ$ be a $\sQ$-valued dissimilarity.
\begin{itemize}
\item[\ref{dissim-def:str}] The value
    \[\be(x,y)\]
     is understood as the truth-value of the statement that \emph{$x$ is dissimilar to $y$}. The axiom of strictness dictates that each entity is less dissimilar to itself than to any other one, which is parallel to the assertion that each entity is more similar to itself than to any other one.

     Since each entity is supposed to be similar to itself unless it is still \emph{undefined}, the value $\be(x,x)$ may be understood as the \emph{extent of $x$ being undefined}, or the \emph{degree of non-existence} of $x$. Therefore, the underlying logical principle of the axiom of strictness is that \emph{non-existence implies dissimilarity}.
\item[\ref{dissim-def:sym}] Dissimilarity is symmetric; that is, if $x$ is dissimilar to $y$, then $y$ is dissimilar to $x$.
\item[\ref{dissim-def:reg}] The value
    \[\be(x,y)\rdd \be(x,x)\]
    measures the extent that the dissimilarity between $x$ and $y$ forces $x$ to be undefined; in other words, it is the truth-value of the \emph{contrapositive} of the assertion that ``if $x$ is defined, then $x$ similar to $y$". The equation
    \begin{equation} \label{regularity}
    \be(x,y)=\be(x,x)\ldd(\be(x,y)\rdd\be(x,x))
    \end{equation}
    then asserts that $x$ is dissimilar to $y$ if, and only if, $x$ being ``similar'' to $y$ would force $x$ to be undefined.

    In order to explain the name ``regularity'' of this axiom, let us recall that in a frame $\sQ$, an element $q\in\sQ$ is \emph{regular} \cite{Johnstone1986} if
    \begin{equation} \label{regular-def}
    \bot\ldd(q\rdd\bot)=q=(\bot\ldd q)\rdd\bot.
    \end{equation}
    The term ``regular'' stems from the fact that regular open sets in a topological space $X$ are exactly regular elements in the frame $\OX$. Analogously, in a quantale $\sQ$ we may call an element $q\in\sQ$ \emph{regular} if
    \[\bot\ldd(q\rdd\bot)=q=(\bot\ldd q)\rdd\bot.\]
    If $\sQ$ is integral and $r\in\sQ$, then it is easy to verify that the operation
    \[p\with_r q:=(p\with q)\vee r\]
    defines a quantale structure on $\ua\!r:=\{q\in\sQ\mid r\leq q\}$, and regular elements in this quantale are precisely those $q\in\ua\!r$ satisfying
    \[r\ldd(q\rdd r)=q=(r\ldd q)\rdd r.\]
    Hence, with a slight abuse of language, it makes sense to read \eqref{regularity} as ``$\be(x,y)$ is \emph{regular} with respect to $\be(x,x)$''.
\item[\ref{dissim-def:tran}] The inequality
    \[\be(x,z)\leq\be(x,y)\ldd(\be(y,z)\rdd\be(y,y))\]
    is equivalent to
    \[\be(x,z)\with(\be(y,z)\rdd\be(y,y))\leq\be(x,y),\]
    which claims that if $x$ is dissimilar to $z$, and if the dissimilarity between $y$ and $z$ forces $y$ to be undefined, then $x$ is dissimilar to $y$; in other words, if $x$ dissimilar to $z$ and $y$ is ``similar'' to $z$, then $x$ is dissimilar to $y$. So, this axiom is actually the \emph{contrapositive transitivity} of dissimilarity.
\end{itemize}
\end{Discus}

\begin{exmp} \label{dissim-2}
For $\sQ={\bf 2}$, a ${\bf 2}$-valued dissimilarity $\be$ on a set $X$ is the complement of an equivalence relation on a subset of $X$. Explicitly,
\[\{(x,y)\in X\times X\mid \be(x,y)=1\}\]
is the complement (in $X\times X$) of an equivalence relation on the subset $\{x\mid\be(x,x)=0\}$ consisting of elements that ``have been defined''. So, as one expects, in this case each dissimilarity relation is the negation of a similarity relation, and vice versa.
\end{exmp}

\begin{exmp}[Guiding example] \label{dissim-exmp-PCX}
Let $\PCX$ be given as in Example \ref{sim-exmp-PCX}, and define
\[\be(f,g):=\Int(X-\Int\{x\in D(f)\cap D(g)\mid f(x)=g(x)\})\]
for all $f,g\in\PCX$. Then, one can check, via a straightforward but quite lengthy verification, that $\be$ is an $\OX$-valued dissimilarity on $\PCX$. The conclusion is also an immediate consequence of Example \ref{sim-exmp-PCX} and Proposition \ref{frame-neg-sim-dissim} that will be explained later. It is clear that $\be(f,f)$ is the largest open set on which $f$ is \emph{undefined}. 
\end{exmp}

\begin{exmp} \label{dissim-exmp-real}
Let
\[\CI=\{[a,b]\mid 0\leq a<b\leq\infty\}\]
as in Example \ref{sim-exmp-real}, and define
\[\be([a,b],[c,d])=\begin{cases}
0 & \text{if}\ b\vee d=\infty,\\
\max\{0,b\wedge d-a\vee c\} & \text{else}.
\end{cases}\]
Then it is straightforward to check that $\be$ is a $[0,\infty]$-valued dissimilarity on $\CI$.
\end{exmp}

\begin{rem} \label{dissim-bot} (to be continued in Remark \ref{dissim-bot-2})
Let $\be:X\times X\to\sQ$ be a map with
\[\be(x,x)=\bot\]
for all $x\in X$. Then $\be$ is a $\sQ$-valued dissimilarity on $X$ if, and only if,
\begin{enumerate}[label=(\arabic*)]
\item (symmetry) \ $\be(x,y)=\be(y,x)^{\circ}$,
\item (regularity) \ $\be(x,y)$ is a regular element of $\sQ$ (see Equation \eqref{regular-def}),
\item (contrapositive transitivity) \ $\be(x,z)\leq\be(x,y)\ldd(\be(y,z)\rdd\bot)$
\end{enumerate}
for all $x,y,z\in X$. The semantic meaning of the inequality $\be(x,z)\leq\be(x,y)\ldd(\be(y,z)\rdd\bot)$ is that ``if $x$ is dissimilar to $z$ and if $y$ is not dissimilar to $z$, then $x$ is dissimilar to $y$.'' In particular, if $\sQ$ is a complete Boolean algebra, then, in the presence of the axiom of symmetry, the axiom of contrapositive transitivity is actually equivalent to \[\be(x,z)\leq\be(x,y)\vee\be(y,z),\]
which means that ``if $x$ is dissimilar to $z$, then for each $y$, either $x$ is dissimilar to $y$ or $y$ is dissimilar to $z$.''

In what follows, a $\sQ$-valued dissimilarity $\beta$ with
\[\beta(x,x)=\bot\]
for all $x$ will be called \emph{rigid}. In a rigid $\sQ$-valued dissimilarity, every entity is never dissimilar to itself no matter whether it has been ``fully defined''.
\end{rem}

\begin{rem} \label{dissim-vs-apart}
Although our notion of dissimilarity is inspired by that of \emph{apartness relation} of Scott (see \cite[Section 4]{Scott1979}), they are conceptually different. If $\sQ$ is a frame, then a $\sQ$-valued model of apartness relation consists of the following data:
\begin{itemize}
\item a set $X$;
\item a map $E:X\to\sQ$, where the value $E(x)$ is interpreted as the \emph{extent of existence} of $x$;
\item a map $\ga:X\times X\to\sQ$, where the value $\ga(x,y)$ is interpreted as the degree of $x$ being \emph{apart} from $y$.
\end{itemize}
These data are subject to the following requirements for all $x,y,z\in X$:
\begin{enumerate}[label=(\arabic*)]
\item  $\ga(x,y)\leq E(x)\wedge E(y)$,
\item  $\ga(x,x)=\bot$,
\item  $\ga(x,y)=\ga(y,x)$,
\item  $\ga(x,z)\wedge E(y)\leq \ga(x,y)\vee\ga(z,y)$.
\end{enumerate}
It is easy to see that the map $\be$ given in Example \ref{dissim-exmp-PCX} cannot be made into an $\OX$-valued apartness relation on $\PCX$, and thus $\sQ$-valued apartness relations are essentially different from $\sQ$-valued similarities. However, they are closely related if $\sQ$ is a complete Boolean algebra as we see below.

Let ${\sf B}$ be a complete Boolean algebra. Then a rigid ${\sf B}$-valued dissimilarity on a set $X$ is a map $\be:X\times X\to{\sf B}$ such that
\[\be(x,x)=\bot\quad\text{and}\quad\be(x,z)\leq\be(x,y)\vee\be(y,z)\]
for all $x,y,z\in X$. If $(X,E,\ga)$ is a $\sB$-valued apartness relation with $E(x)=\top$ for all $x\in X$, then $\ga$ is a rigid $\sB$-valued similarity on $X$. 
Conversely, if $\be$ is a rigid $\sB$-valued dissimilarity on $X$, then $(X,E,\be)$ is a $\sB$-valued apartness relation with $E(x)=\top$ for all $x\in X$.
Therefore, for a complete Boolean algebra $\sB$, a $\sB$-valued apartness relation on a set whose elements have all been ``proved to exist'' is precisely a rigid $\sB$-valued dissimilarity relation.

For connections between Boolean-valued similarities Boolean-valued apartness relations, see Remark \ref{sim-vs-apart-Boolean}.
\end{rem}

\section{Similarities and dissimilarities as enriched categories} \label{Sim-Dissim-Cat}

It is already known from \cite{Hoehle2011a} that sets equipped with a $\sQ$-valued similarity are \emph{symmetric} categories enriched in a subquantaloid of the quantaloid $\DsQ$ of \emph{diagonals} of $\sQ$ \cite{Hoehle2011a,Pu2012,Stubbe2014}. The aim of this section is to show that there is an analogous categorical interpretation for $\sQ$-valued dissimilarities; that is, a set equipped with a $\sQ$-dissimilarity can be made into a \emph{symmetric} category enriched in a subquantaloid $\KsQ$ of the quantaloid $\BsQ$ of \emph{back diagonals} of $\sQ$ introduced in \cite{Shen2016a}. Therefore, $\sQ$-valued similarities and $\sQ$-valued dissimilarities are both instances of enriched categories.

\subsection{Quantaloid-enriched categories}

A \emph{quantaloid} \cite{Rosenthal1996} $\CQ$ is a category in which every hom-set is a complete lattice, and the composition $\circ$ of $\CQ$-arrows preserves suprema on both sides, i.e.,
\[v\circ\Big(\bv_{i\in I} u_i\Big)=\bv_{i\in I}v\circ u_i\quad\text{and}\quad\Big(\bv_{i\in I} v_i\Big)\circ u=\bv_{i\in I}v_i\circ u\]
for all $\CQ$-arrows $u,u_i:p\to q$, $v,v_i:q\to r$ $(i\in I)$. The corresponding right adjoints induced by the composition maps \[-\circ u\dv -\lda u:\ \CQ(p,r)\to\CQ(q,r)\quad\text{and}\quad v\circ -\dv v\rda -:\ \CQ(p,r)\to\CQ(p,q)\] satisfy \[v\circ u\leq w\iff v\leq w\lda u\iff u\leq v\rda w\] for all $\CQ$-arrows $u:p\to q$, $v:q\to r$, $w:p\to r$, where the operations $\lda$ and $\rda$ are called \emph{left} and \emph{right implications} in $\CQ$, respectively.

A (unital) quantale $\sQ=(\sQ,\with,k)$ is exactly a one-object quantaloid. As we will construct several quantaloids out of a quantale $\sQ$ later, in order to eliminate ambiguity we denote implications in a quantale $\sQ$ by $\ldd$ and $\rdd$ as in \eqref{imp-def}, and reserve the notations $\lda$ and $\rda$ for the quantaloids constructed from $\sQ$.

Given a \emph{small} quantaloid $\CQ$ (i.e., $\ob\CQ$ is a set), a \emph{$\CQ$-category} (also \emph{category enriched in $\CQ$}) \cite{Stubbe2005} consists of a \emph{$\CQ$-typed set} $X$ (i.e., a set  $X$ equipped with a \emph{type} map $|\text{-}|:X\to\ob\CQ$) and a family of $\CQ$-arrows $\al(x,y)\in\CQ(|x|,|y|)$ $(x,y\in X)$, such that
\[1_{|x|}\leq\al(x,x)\quad\text{and}\quad\al(y,z)\circ\al(x,y)\leq\al(x,z)\]
for all $x,y,z\in X$.

A \emph{$\CQ$-functor} $f:(X,\al)\to(Y,\be)$ between $\CQ$-categories is a map $f:X\to Y$ such that
\[|x|=|fx|\quad\text{and}\quad\al(x,y)\leq\be(fx,fy)\]
for all $x,y\in X$. The category of $\CQ$-categories and $\CQ$-functors is denoted by
\[\QCat.\]

A \emph{homomorphism} $F:\CQ\to\CR$ of quantaloids is a functor of the underlying categories that preserves suprema of $\CQ$-arrows. By an \emph{involution} on a quantaloid $\CQ$ we mean a homomorphism
\[(-)^{\circ}:\CQ^{\op}\to\CQ\]
of quantaloids whose composition with itself outputs the identity homomorphism on $\CQ$. Explicitly, an involution on $\CQ$ is given by maps
\[(-)^{\circ}:\ob\CQ\to\ob\CQ\quad\text{and}\quad (-)^{\circ}:\CQ(p,q)\to\CQ(q^{\circ},p^{\circ})\]
for all $p,q\in\ob\CQ$, such that
\[q^{\circ\circ}=q,\quad (1_q)^{\circ}=1_{q^{\circ}},\quad u^{\circ\circ}=u,\quad (v\circ u)^{\circ}=u^{\circ}\circ v^{\circ}\quad\text{and}\quad\Big(\bv_{i\in I}u_i\Big)^{\circ}=\bv_{i\in I}u_i^{\circ}\]
for all $q\in\ob\CQ$ and $\CQ$-arrows $u,u_i:p\to q$ $(i\in I)$. Given a small \emph{involutive} quantaloid $\CQ$, i.e., a small quantaloid $\CQ$ equipped with an involution, we say that a $\CQ$-category $(X,\al)$ is \emph{symmetric} if
\begin{equation} \label{sym-Q-cat}
\al(x,y)=\al(y,x)^{\circ}
\end{equation}
for all $x,y\in X$.

\begin{rem} \label{sym-Q-cat-Stubbe}
Our definition of involutive quantaloids here slightly generalizes that of Rosenthal (see \cite[Definition 2.5.1]{Rosenthal1996}), which requires an involution to be the identity on objects. Indeed, as explained below, these definitions make no difference for the purpose of defining the symmetry of $\CQ$-categories.

Let $(X,\al)$ be a symmetric $\CQ$-category. Since $\al(x,y)\in\CQ(|x|,|y|)$ and $\al(y,x)^{\circ}\in\CQ(|x|^{\circ},|y|^{\circ})$, Equation \eqref{sym-Q-cat} actually forces
\[|x|=|x|^{\circ}\]
for all $x\in X$. Therefore, a symmetric $\CQ$-category is in fact a category enriched in the full subquantaloid $\CQ^{\circ}$ of $\CQ$ with
\[\ob\CQ^{\circ}=\{q\in\ob\CQ\mid q=q^{\circ}\},\]
which is equipped with the involution inherited from $\CQ$ that is clearly neutral on objects. Hence, symmetric $\CQ$-categories defined by \eqref{sym-Q-cat} are precisely symmetric $\CQ^{\circ}$-categories as postulated by \cite[Definition 2.3]{Heymans2011} and \cite[Definition 6.2]{Hoehle2011a}, whose prototype comes from \cite{Betti1982a}.
\end{rem}

\subsection{$\sQ$-valued similarities as enriched categories}

In this subsection we recall how $\sQ$-valued similarities are represented as symmetric categories enriched in a quantaloid $\HsQ$ (see \cite{Hoehle2011a}), which is a subquantaloid of the quantaloid $\DsQ$ of \emph{diagonals} of $\sQ$ \cite{Hoehle2011a,Stubbe2014}.

Let $p,q\in\sQ$. By a \emph{diagonal} \cite{Stubbe2014} from $p$ to $q$ we mean an element $d\in\sQ$ such that
\begin{equation} \label{diagonal-def}
(d\ldd p)\with p=d=q\with(q\rdd d).
\end{equation}

\begin{lem} \label{diagonal-property}
Let $p,q\in\sQ$.
\begin{enumerate}[label=\rm(\arabic*)]
\item \label{diagonal-property:bot} $\bot$ is a diagonal from $p$ to $q$.
\item \label{diagonal-property:id} $q$ is a diagonal from $q$ to $q$.
\item \label{diagonal-property:inf} $\bv\limits_{i\in I}d_i$ is a diagonal from $p$ to $q$ if so is each $d_i$ $(i\in I)$.
\item \label{diagonal-property:involution} If $d$ is a diagonal from $p$ to $q$, then $d^{\circ}$ is a diagonal from $q^{\circ}$ to $p^{\circ}$.
\end{enumerate}
\end{lem}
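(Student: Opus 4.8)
The plan is to derive all four statements directly from the adjunction structure of the implications, reading $(d\ldd p)\with p$ and $q\with(q\rdd d)$ through the counits of $(-\with p)\dv(-\ldd p)$ and $(q\with-)\dv(q\rdd-)$. The key universally valid inequalities I would invoke throughout are the counit estimates $(d\ldd p)\with p\leq d$ and $q\with(q\rdd d)\leq d$, holding for \emph{every} $d$; the assertion that $d$ is a diagonal from $p$ to $q$ is exactly that both of these become equalities. For \ref{diagonal-property:bot}, I would note that $\bot$ is the empty supremum, so distributivity of $\with$ over suprema gives $r\with\bot=\bot=\bot\with r$; the two counit inequalities then force $(\bot\ldd p)\with p\leq\bot$ and $q\with(q\rdd\bot)\leq\bot$, and both are therefore equal to $\bot$. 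For \ref{diagonal-property:id}, I would combine the counit with the unit inequality $k\leq q\ldd q$ (coming from $k\with q=q\leq q$ via the adjunction): this yields $(q\ldd q)\with q\geq k\with q=q$ on one side and $(q\ldd q)\with q\leq q$ on the other, hence equality, and symmetrically $q\with(q\rdd q)=q$.

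The one step that is not a one-line adjunction calculation is \ref{diagonal-property:inf}, and it is where I would be most careful. Writing $d=\bv_{i\in I}d_i$, the inequality $(d\ldd p)\with p\leq d$ is again the counit. For the reverse I would \emph{not} try to commute $-\ldd p$ past the supremum (it is a right adjoint in the wrong variable, so it need not preserve suprema there); instead I would use monotonicity: $d_i\leq d$ gives $d_i\ldd p\leq d\ldd p$, whence $d_i=(d_i\ldd p)\with p\leq(d\ldd p)\with p$ using that each $d_i$ is a diagonal. Taking the supremum over $i$ yields $d\leq(d\ldd p)\with p$, and the right-hand equation $q\with(q\rdd d)=d$ is obtained identically with $q\rdd-$ in place of $-\ldd p$.

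Finally, for \ref{diagonal-property:involution} I would apply $(-)^{\circ}$ to the two defining equations of $d$. Before doing so I would record the mirror of \eqref{invo-imp}, namely $(p\rdd r)^{\circ}=r^{\circ}\ldd p^{\circ}$, which follows from \eqref{invo-imp} by substituting $p\mapsto r^{\circ}$, $q\mapsto p^{\circ}$ and then applying $(-)^{\circ}$ to both sides. Then, using $(p\with q)^{\circ}=q^{\circ}\with p^{\circ}$, applying the involution to $d=q\with(q\rdd d)$ gives $d^{\circ}=(q\rdd d)^{\circ}\with q^{\circ}=(d^{\circ}\ldd q^{\circ})\with q^{\circ}$, and applying it to $d=(d\ldd p)\with p$ gives $d^{\circ}=p^{\circ}\with(d\ldd p)^{\circ}=p^{\circ}\with(p^{\circ}\rdd d^{\circ})$. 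These two equalities are precisely the condition that $d^{\circ}$ is a diagonal from $q^{\circ}$ to $p^{\circ}$, completing the proof.
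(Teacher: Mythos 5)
Your proof is correct, and it is precisely the routine adjunction-style verification that the paper intends: the paper states this lemma without proof, treating it as a straightforward consequence of the residuation (counit/unit) inequalities, monotonicity, and Equation \eqref{invo-imp}. In particular, your handling of the supremum in part \ref{diagonal-property:inf} via monotonicity (rather than trying to commute $-\ldd p$ past the join) and your derivation of the mirrored identity $(p\rdd r)^{\circ}=r^{\circ}\ldd p^{\circ}$ for part \ref{diagonal-property:involution} are exactly the right steps.
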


If $d$ is a diagonal from $p$ to $q$ and $e$ is a diagonal from $q$ to $r$, then it is not difficult to verify that
\[(e\ldd q)\with d=e\with(q\rdd d),\]
and thus we set
\begin{equation} \label{diagonal-comp-def}
e\diamond d:=(e\ldd q)\with d=e\with(q\rdd d).
\end{equation}

\begin{lem} \label{diagonal-comp}
Let $d$ be a diagonal from $p$ to $q$ and let $e$ be a diagonal from $q$ to $r$.
\begin{enumerate}[label=\rm(\arabic*)]
\item $e\diamond d$ is a diagonal from $p$ to $r$, called the \emph{composite} of $d$ and $e$.
\item $d\diamond p=d=q\diamond d$.
\item The composition of diagonals is associative. 
\item The composition of diagonals preserves suprema on both sides, i.e.,
    \[e\diamond\bv\limits_{i\in I}d_i=\bv_{i\in I}(e\diamond d_i)\quad\Big(\bv\limits_{i\in I} e_i\Big)\diamond d=\bv_{i\in I}(e_i\diamond d)\]
    for all diagonals $d_i$ from $p$ to $q$ and $e_i$ from $q$ to $r$ $(i\in I)$.
\end{enumerate}
\end{lem}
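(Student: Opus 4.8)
The backbone of the whole argument is the pair of equivalent expressions for the composite recorded in \eqref{diagonal-comp-def}, namely $e\diamond d=(e\ldd q)\with d=e\with(q\rdd d)$: for each of the four claims one picks whichever of the two forms exposes the factor to be manipulated, and then invokes the adjunction laws together with the defining equations \eqref{diagonal-def} of $d$ and $e$. I would also isolate two auxiliary identities, $((a\with p)\ldd p)\with p=a\with p$ and $r\with(r\rdd(r\with b))=r\with b$, each immediate from the adjunction: the counit $(x\ldd p)\with p\leq x$ (resp.\ $r\with(r\rdd y)\leq y$) gives ``$\leq$'', and the unit $a\leq(a\with p)\ldd p$ (resp.\ $b\leq r\rdd(r\with b)$) applied under $-\with p$ (resp.\ $r\with-$) gives ``$\geq$''.

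For claim (2) I would simply feed in the identity diagonals supplied by Lemma \ref{diagonal-property}\ref{diagonal-property:id}: taking the first form with middle object $p$ gives $d\diamond p=(d\ldd p)\with p=d$, and taking the second form with middle object $q$ gives $q\diamond d=q\with(q\rdd d)=d$, each being verbatim one half of \eqref{diagonal-def}.

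For claim (1) the point is that the two forms display the composite as a one-sided product. Substituting $d=(d\ldd p)\with p$ into the first form rewrites $e\diamond d=(e\ldd q)\with(d\ldd p)\with p$, i.e.\ as $a\with p$ with $a=(e\ldd q)\with(d\ldd p)$, whence the first auxiliary identity yields the left half $((e\diamond d)\ldd p)\with p=e\diamond d$ of the diagonal condition. Symmetrically, substituting $e=r\with(r\rdd e)$ into the second form writes $e\diamond d=r\with(r\rdd e)\with(q\rdd d)$ as $r\with b$, and the dual auxiliary identity yields the right half $e\diamond d=r\with(r\rdd(e\diamond d))$. Hence $e\diamond d$ is a diagonal from $p$ to $r$.

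For claim (3) associativity is almost formal once the two forms are mixed correctly: expanding the outer composite of $(f\diamond e)\diamond d$ by the second form and then $f\diamond e$ by the first form gives
\[(f\diamond e)\diamond d=(f\diamond e)\with(q\rdd d)=\big((f\ldd r)\with e\big)\with(q\rdd d)=(f\ldd r)\with\big(e\with(q\rdd d)\big)=(f\ldd r)\with(e\diamond d)=f\diamond(e\diamond d),\]
the central equality being associativity of $\with$ and the remaining ones merely re-reading \eqref{diagonal-comp-def}. For claim (4) I would first note that $\bv_i d_i$ and $\bv_i e_i$ are again diagonals by Lemma \ref{diagonal-property}\ref{diagonal-property:inf}, so the composites are defined; then in the first form $(e\ldd q)\with(-)$ the variable $d$ is a single $\with$-factor, and in the second form $(-)\with(q\rdd d)$ the variable $e$ is a single $\with$-factor, so distributivity of $\with$ over arbitrary suprema yields $e\diamond\bv_i d_i=\bv_i(e\diamond d_i)$ and $(\bv_i e_i)\diamond d=\bv_i(e_i\diamond d)$ respectively. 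The only genuinely computational step is (1)—verifying the composite is again a diagonal—since it is the sole place simultaneously using both defining equations of $d$ and $e$ together with the adjunction identities; everything else reduces to associativity and distributivity of $\with$.
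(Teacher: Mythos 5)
Your proof is correct and complete: the two equivalent forms of $\diamond$ in \eqref{diagonal-comp-def}, the unit/counit identities of the adjunctions $(-\with p)\dv(-\ldd p)$ and $(r\with-)\dv(r\rdd-)$, and distributivity of $\with$ over suprema are exactly what is needed, and each of your four verifications goes through as written. The paper itself gives no proof of this lemma (it is imported from the literature on diagonals, cf.\ \cite{Hoehle2011a,Stubbe2014}), and your argument is precisely the routine verification those sources leave to the reader, so there is nothing to reconcile.
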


Lemmas \ref{diagonal-property} and \ref{diagonal-comp} guarantee the existence of a quantaloid $\DsQ$ given by the following data, called the quantaloid of \emph{diagonals} of $\sQ$:
\begin{itemize}
\item objects of $\DsQ$ are elements $p,q,r,\dots$ of $\sQ$;
\item for $p,q\in\sQ$, morphisms from $p$ to $q$ in $\DsQ$ are diagonals from $p$ to $q$;
\item the composition of diagonals $d\in\BsQ(p,q)$ and $e\in\BsQ(q,r)$ is given by $e\diamond d$;
\item the identity diagonal on $q\in\sQ$ is $q$ itself;
\item each hom-set $\DsQ(p,q)$ is equipped with the order inherited from $\sQ$.
\end{itemize}
As pointed out in \cite[Example 2.14]{Stubbe2014}, the above construction makes sense not only for a general quantaloid $\CQ$, but also for a general category $\CC$ \cite{Grandis2000,Grandis2002}. It is easily seen that
\[\HsQ(p,q):=\{d\in\DsQ(p,q)\mid d\leq p\wedge q\}\]
for all $p,q\in\sQ$ defines a subquantaloid $\HsQ$ of $\DsQ$ (see \cite[Remark 4.4]{Hoehle2011a}), and we denote by
\[d:p\rqa q\]
a morphism $d\in\HsQ(p,q)$; that is,
\[d:p\rqa q\iff d\leq p\wedge q\quad\text{and}\quad(d\ldd p)\with p=d=q\with(q\rdd d).\]
In the case that $\sQ$ is integral, we have
\[\HsQ=\DsQ\]
since $d\leq p\wedge q$ would be a consequence of $(d\ldd p)\with p=d=q\with(q\rdd d)$. Moreover,
\[\HsQ(p,q)=\DsQ(p,q)=\{d\in\sQ\mid d\leq p\wedge q\}\]
if $\sQ$ is divisible.

\begin{exmp}
Since Lawvere's quantale $[0,\infty]$ (see Example \ref{quantale-exmp}\ref{quantale-exmp:Lawvere}) is divisible, it holds that
\[\BD_*[0,\infty](p,q)=[p\vee q,\infty]\]
for all $p,q\in[0,\infty]$, and
\[e\diamond d=e-q+d \]
for all $d:p\rqa q$ and $e:q\rqa r$.
\end{exmp}

Note that by Lemma \ref{diagonal-property}\ref{diagonal-property:involution}, $\HsQ$ is also an involutive quantaloid with the involution lifted from $\sQ$. From the definition we see that a $\HsQ$-category consists of a set $X$, a map $|\text{-}|:X\to\sQ$ and a map $\al:X\times X\to\sQ$ such that
\begin{enumerate}[label=(\arabic*)]
\item \label{HsQ-cat:str} $\al(x,y)\leq|x|\wedge|y|$,
\item \label{HsQ-cat:div} $(\al(x,y)\ldd |x|)\with |x|=\al(x,y)=|y|\with(|y|\rdd\al(x,y))$,
\item \label{HsQ-cat:ref} $|x|\leq\al(x,x)$,
\item \label{HsQ-cat:tran} $(\al(y,z)\ldd |y|)\with\al(x,y)=\al(y,z)\with(|y|\rdd\al(x,y))\leq\al(x,z)$
\end{enumerate}
for all $x,y,z\in X$, where \ref{HsQ-cat:str} and \ref{HsQ-cat:div} follows from $\al(x,y)\in\HsQ(|x|,|y|)$. Then, \ref{KsQ-cat:str} in conjunction with \ref{KsQ-cat:ref} leads to
\[\al(x,x)=|x|\]
for all $x\in X$, and thus a $\HsQ$-category is exactly given by a map $\al:X\times X\to\sQ$ such that (cf. Definition \ref{sim-def})
\begin{itemize}
\item $\al(x,y)\leq\al(x,x)\wedge\al(y,y)$,
\item $(\al(x,y)\ldd\al(x,x))\with\al(x,x)=\al(x,y)=\al(y,y)\with(\al(y,y)\rdd\al(x,y))$,
\item $(\al(y,z)\ldd\al(y,y))\with\al(x,y)=\al(y,z)\with(\al(y,y)\rdd\al(x,y))\leq\al(x,z)$
\end{itemize}
for all $x,y,z\in X$. Therefore, a $\sQ$-valued similarity $\al$ on a set $X$ is exactly a $\HsQ$-category satisfying
\begin{itemize}
\item $\al(x,y)=\al(y,x)^{\circ}$
\end{itemize}
for all $x,y\in X$; that is, a symmetric $\HsQ$-category:

\begin{thm} \label{sim-DsQ-Cat} (See \cite{Hoehle2011a}.)
A set equipped with a $\sQ$-valued similarity is precisely a symmetric $\HsQ$-category.
\end{thm}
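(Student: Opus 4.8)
The plan is to verify directly that the four axioms \ref{sim-def:str}--\ref{sim-def:tran} of a $\sQ$-valued similarity translate, term by term, into the defining data of a symmetric $\HsQ$-category, and conversely. The first step is to observe that the type map $|\text{-}|:X\to\sQ$ carried by a $\HsQ$-category is not free: membership $\al(x,y)\in\HsQ(|x|,|y|)$ forces $\al(x,x)\leq|x|\wedge|x|=|x|$, while the reflexivity axiom $1_{|x|}\leq\al(x,x)$ of a $\CQ$-category, together with the fact that the identity morphism on the object $|x|$ in $\HsQ$ is the diagonal $|x|$ itself (Lemma \ref{diagonal-comp}), gives $|x|\leq\al(x,x)$. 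Hence $|x|=\al(x,x)$, so a symmetric $\HsQ$-category is already determined by the single family $\al(x,y)$.

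Next I would match the remaining axioms under the identification $|x|=\al(x,x)$. The bound $\al(x,y)\leq|x|\wedge|y|$ attached to $\HsQ(|x|,|y|)$ is exactly the strictness axiom \ref{sim-def:str}; the symmetry condition \eqref{sym-Q-cat} defining a symmetric $\CQ$-category is exactly \ref{sim-def:sym}; and, recalling from \eqref{diagonal-comp-def} that the diagonal composite is $\al(y,z)\diamond\al(x,y)=(\al(y,z)\ldd\al(y,y))\with\al(x,y)$, the composition axiom $\al(y,z)\diamond\al(x,y)\leq\al(x,z)$ is precisely the transitivity axiom \ref{sim-def:tran}. What is left is to reconcile the two-sided diagonal equation \eqref{diagonal-def} with the one-sided divisibility axiom \ref{sim-def:div}.

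This reconciliation is the one genuine computation, and the place I expect any difficulty to sit. Being a diagonal from $\al(x,x)$ to $\al(y,y)$ requires both $(\al(x,y)\ldd\al(x,x))\with\al(x,x)=\al(x,y)$ and $\al(x,y)=\al(y,y)\with(\al(y,y)\rdd\al(x,y))$, whereas \ref{sim-def:div} supplies only the first. I would derive the second by $\circ$-symmetrisation of the first: applying $(-)^{\circ}$ to \ref{sim-def:div}, using that $\al(x,x)$ is hermitian, the involution identity \eqref{invo-imp} in the form $(p\ldd q)^{\circ}=q^{\circ}\rdd p^{\circ}$, and $\al(x,y)^{\circ}=\al(y,x)$ from \ref{sim-def:sym}, one converts the left-divisibility equation at $(x,y)$ into $\al(y,x)=\al(x,x)\with(\al(x,x)\rdd\al(y,x))$; relabelling $x\leftrightarrow y$ yields the missing right-divisibility identity. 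The identical manoeuvre turns \ref{sim-def:tran} into its alternative form $\al(y,z)\with(\al(y,y)\rdd\al(x,y))\leq\al(x,z)$, matching the second expression for the composite in \eqref{diagonal-comp-def}.

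With these correspondences assembled, both directions close at once. A $\sQ$-valued similarity satisfies \ref{sim-def:str}--\ref{sim-def:tran}, so, taking $|x|=\al(x,x)$, each $\al(x,y)$ lies in $\HsQ(|x|,|y|)$ and the reflexivity, composition and symmetry axioms hold, making $(X,\al)$ a symmetric $\HsQ$-category; conversely, once the type of a symmetric $\HsQ$-category is recognised as $\al(x,x)$, unpacking the diagonal, identity, composition and symmetry conditions returns exactly \ref{sim-def:str}--\ref{sim-def:tran}. I anticipate no obstacle beyond keeping the left and right implications and the direction of the involution straight in the divisibility step.
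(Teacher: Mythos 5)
Your proposal is correct and follows essentially the same route as the paper: unpack the $\HsQ$-category axioms, use reflexivity plus the bound $\al(x,x)\leq|x|$ to force $|x|=\al(x,x)$, and recover the missing right-hand diagonal equation and alternative transitivity form from \ref{sim-def:sym}, the hermitian character of $\al(x,x)$, and Equation \eqref{invo-imp}. The only cosmetic difference is that the paper delegates that last $\circ$-symmetrisation step to a cited lemma of H{\"o}hle--Kubiak, whereas you carry out the computation explicitly.
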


\begin{rem} \label{sym-DQ-cat}
As elaborated in Remark \ref{sym-Q-cat-Stubbe}, a symmetric $\HsQ$-category is exactly a symmetric $\HssQ$-category, where $\HssQ$ is the involutive quantaloid constructed from $\sQ$ by H{\"o}hle--Kubiak in \cite[Proposition 6.3]{Hoehle2011a}.
\end{rem}

\subsection{$\sQ$-valued dissimilarities as enriched categories}

In this subsection, we construct a quantaloid $\KsQ$ for each quantale $\sQ$ and reveal that sets equipped with a $\sQ$-valued dissimilarity are precisely \emph{symmetric} categories enriched in $\KsQ$.

Let $p,q\in\sQ$. By a \emph{back diagonal} \cite{Shen2016a} from $p$ to $q$ we mean an element $b\in\sQ$ such that
\begin{equation} \label{back-diagonal-def}
p\ldd(b\rdd p)=b=(q\ldd b)\rdd q.
\end{equation}
The verification of the following lemma is straightforward:

\begin{lem} \label{back-diagonal-property}
Let $p,q\in\sQ$.
\begin{enumerate}[label=\rm(\arabic*)]
\item \label{back-diagonal-property:top} $\top$ is a back diagonal from $p$ to $q$.
\item \label{back-diagonal-property:id} $q$ is a back diagonal from $q$ to $q$.
\item \label{back-diagonal-property:inf} $\bw\limits_{i\in I}b_i$ is a back diagonal from $p$ to $q$ if so is each $b_i$ $(i\in I)$.
\item \label{back-diagonal-property:involution} If $b$ is a back diagonal from $p$ to $q$, then $b^{\circ}$ is a back diagonal from $q^{\circ}$ to $p^{\circ}$.
\end{enumerate}
\end{lem}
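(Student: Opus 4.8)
The plan is to recognise that each of the two defining equations in \eqref{back-diagonal-def} exhibits $b$ as a fixed point of a monotone \emph{inflationary} operator on $\sQ$; once this is isolated, parts \ref{back-diagonal-property:top}--\ref{back-diagonal-property:inf} are formal, and part \ref{back-diagonal-property:involution} reduces to applying the involution identity \eqref{invo-imp} to the two defining equations. So the only substantive step is a short order-theoretic lemma about the operators involved.

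First I would fix $p,q\in\sQ$ and introduce
\[L_p(b)=p\ldd(b\rdd p)\quad\text{and}\quad R_q(b)=(q\ldd b)\rdd q,\]
so that $b$ is a back diagonal from $p$ to $q$ exactly when $L_p(b)=b=R_q(b)$. The key observation is that both $L_p$ and $R_q$ are \emph{monotone} (each is a composite of two order-reversing implication maps) and \emph{inflationary}, i.e. $b\leq L_p(b)$ and $b\leq R_q(b)$ for all $b$. For $L_p$, the adjunction $(-\with(b\rdd p))\dv(-\ldd(b\rdd p))$ turns $b\leq p\ldd(b\rdd p)$ into the equivalent inequality $b\with(b\rdd p)\leq p$, which holds because $b\rdd p=\bv\{q'\mid b\with q'\leq p\}$ and $\with$ preserves suprema; the inequality $b\leq R_q(b)$ is verified the same way using $(q\ldd b)\with b\leq q$.

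With this in hand the first three parts follow at once. For \ref{back-diagonal-property:top}, an inflationary operator necessarily fixes the top element, so $L_p(\top)=\top=R_q(\top)$. For \ref{back-diagonal-property:id}, inflationarity gives $q\leq L_q(q)$, while $k\leq q\rdd q$ (from $q\with k=q$) together with the antitonicity of $q\ldd(-)$ gives $L_q(q)=q\ldd(q\rdd q)\leq q\ldd k=q$, whence $L_q(q)=q$; the identity $R_q(q)=q$ is symmetric, using $k\leq q\ldd q$ and $k\rdd q=q$. For \ref{back-diagonal-property:inf}, if each $b_i$ is a back diagonal then monotonicity and fixedness yield $L_p\big(\bw_{i}b_i\big)\leq L_p(b_i)=b_i$ for every $i$, hence $L_p\big(\bw_{i}b_i\big)\leq\bw_{i}b_i$, and inflationarity supplies the reverse inequality; the same runs for $R_q$, so $\bw_{i}b_i$ is again a common fixed point.

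Finally, for \ref{back-diagonal-property:involution} I would apply $(-)^{\circ}$ to the two equations defining a back diagonal, using \eqref{invo-imp}, namely $(p\ldd q)^{\circ}=q^{\circ}\rdd p^{\circ}$, together with its mirror image $(p\rdd q)^{\circ}=q^{\circ}\ldd p^{\circ}$ (obtained by applying $(-)^{\circ}$ to \eqref{invo-imp} and using $x^{\circ\circ}=x$). Taking involutions of $b=p\ldd(b\rdd p)$ and $b=(q\ldd b)\rdd q$ rewrites them respectively as $b^{\circ}=(p^{\circ}\ldd b^{\circ})\rdd p^{\circ}$ and $b^{\circ}=q^{\circ}\ldd(b^{\circ}\rdd q^{\circ})$, which are precisely the two conditions for $b^{\circ}$ to be a back diagonal from $q^{\circ}$ to $p^{\circ}$. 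There is no genuine obstacle; the only point requiring a little care is \ref{back-diagonal-property:inf}, where one must notice that it is the \emph{inflationary} (closure-type) character of $L_p$ and $R_q$ that forces their common fixed points to be closed under arbitrary infima. This is exactly dual to the situation for ordinary diagonals in Lemma \ref{diagonal-property}, whose defining operators $(d\ldd p)\with p$ and $q\with(q\rdd d)$ are instead \emph{deflationary}, so that diagonals are closed under suprema rather than infima.
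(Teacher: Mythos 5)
Your proposal is correct in every detail: the monotonicity and inflationarity of $L_p(b)=p\ldd(b\rdd p)$ and $R_q(b)=(q\ldd b)\rdd q$ both follow from the adjunctions defining $\ldd$ and $\rdd$, and your fixed-point argument then yields parts (1)--(3), while part (4) is exactly the application of \eqref{invo-imp} and its mirror $(p\rdd q)^{\circ}=q^{\circ}\ldd p^{\circ}$ to the two defining equations. The paper offers no proof at all (it declares the verification ``straightforward''), so there is nothing to diverge from; your packaging of the argument --- back diagonals as common fixed points of a monotone \emph{inflationary} pair of operators, dual to the \emph{deflationary} pair governing diagonals --- is a clean and correct way to organize the intended elementwise verification, and it explains uniformly why back diagonals are closed under infima while diagonals are closed under suprema.
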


If $b$ is a back diagonal from $p$ to $q$ and $c$ is a back diagonal from $q$ to $r$, then
\[b\ldd(c\rdd q)\leq(q\ldd b)\rdd c\]
since
\[(q\ldd b)\with(b\ldd(c\rdd q))\leq q\ldd(c\rdd q)=c,\]
and similarly
\[(q\ldd b)\rdd c\leq b\ldd(c\rdd q).\]
Thus it makes sense to define
\begin{equation} \label{back-diagonal-comp-def}
c\bullet b:=b\ldd(c\rdd q)=(q\ldd b)\rdd c,
\end{equation}
which turns out to be a back diagonal from $p$ to $r$:

\begin{lem} \label{back-diagonal-comp}
Let $b$ be a back diagonal from $p$ to $q$ and let $c$ be a back diagonal from $q$ to $r$.
\begin{enumerate}[label=\rm(\arabic*)]
\item $c\bullet b$ is a back diagonal from $p$ to $r$, called the \emph{composite} of $b$ and $c$.
\item $b\bullet p=b=q\bullet b$.
\item The composition of back diagonals is associative. 
\item The composition of back diagonals preserves infima on both sides, i.e.,
    \[c\bullet\bw_{i\in I}b_i=\bw_{i\in I}(c\bullet b_i)\quad\Big(\bw_{i\in I} c_i\Big)\bullet b=\bw_{i\in I}(c_i\bullet b)\]
    for all back diagonals $b_i$ from $p$ to $q$ and $c_i$ from $q$ to $r$ $(i\in I)$.
\end{enumerate}
\end{lem}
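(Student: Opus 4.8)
The plan is to run everything off the two equivalent formulas \eqref{back-diagonal-comp-def} for $c\bullet b$ together with the elementary implication laws in $\sQ$: the adjunctions
\[x\with y\leq r\iff x\leq r\ldd y\iff y\leq x\rdd r,\]
the resulting counits $(r\ldd y)\with y\leq r$ and $x\with(x\rdd r)\leq r$, and the two ``currying'' identities
\[(r\ldd q)\ldd p=r\ldd(p\with q)\qquad\text{and}\qquad p\rdd(q\rdd r)=(q\with p)\rdd r,\]
all immediate from \eqref{imp-def}. For (1), using the back-diagonal equation $b=p\ldd(b\rdd p)$ and the first currying identity I rewrite
\[c\bullet b=b\ldd(c\rdd q)=\big(p\ldd(b\rdd p)\big)\ldd(c\rdd q)=p\ldd\big((c\rdd q)\with(b\rdd p)\big),\]
so $c\bullet b$ has the shape $p\ldd z$; a one-line consequence of the counits is that any element $p\ldd z$ is fixed by $w\mapsto p\ldd(w\rdd p)$, which is precisely the first defining equation of a back diagonal at $p$. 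Dually, from $c\bullet b=(q\ldd b)\rdd c$, the equation $c=(r\ldd c)\rdd r$ and the second currying identity I get $c\bullet b=\big((r\ldd c)\with(q\ldd b)\big)\rdd r$, an element of the shape $u\rdd r$, which satisfies the second defining equation at $r$. Hence $c\bullet b$ is a back diagonal from $p$ to $r$. For (2), substituting the identity back diagonals $p$ and $q$ into \eqref{back-diagonal-comp-def} gives $b\bullet p=p\ldd(b\rdd p)$ and $q\bullet b=(q\ldd b)\rdd q$, each equal to $b$ by \eqref{back-diagonal-def}.

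For (4) I will write each side with the convenient one of the two formulas: $\big(\bw_{i\in I}c_i\big)\bullet b=(q\ldd b)\rdd\big(\bw_{i\in I}c_i\big)$ and $c\bullet\big(\bw_{i\in I}b_i\big)=\big(\bw_{i\in I}b_i\big)\ldd(c\rdd q)$, and then invoke that $(q\ldd b)\rdd-$ and $-\ldd(c\rdd q)$ are right adjoints and so preserve arbitrary infima. Since infima of back diagonals are computed in $\sQ$ by Lemma \ref{back-diagonal-property}\ref{back-diagonal-property:inf}, these are exactly the infima in the hom-lattices, so (4) follows.

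The real work is (3). From $c\bullet b=b\ldd(c\rdd q)$ and the adjunction one has $x\leq c\bullet b\iff x\with(c\rdd q)\leq b$ for every $x\in\sQ$, and iterating this characterization twice yields
\[x\leq e\bullet(c\bullet b)\iff x\with(e\rdd r)\with(c\rdd q)\leq b,\qquad x\leq(e\bullet c)\bullet b\iff x\with\big((c\ldd(e\rdd r))\rdd q\big)\leq b,\]
where in the second I use the first formula for $e\bullet c$ to rewrite $(e\bullet c)\rdd q=(c\ldd(e\rdd r))\rdd q$. Setting $A=(e\rdd r)\with(c\rdd q)$ and $B=(c\ldd(e\rdd r))\rdd q$, associativity reduces to $b\ldd A=b\ldd B$. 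One inequality is cheap: $A\leq B$, since $\big(c\ldd(e\rdd r)\big)\with A\leq c\with(c\rdd q)\leq q$ by two counits, and antitonicity of $b\ldd-$ then gives $(e\bullet c)\bullet b\leq e\bullet(c\bullet b)$. For the reverse I start from $x\with A\leq b$ and argue $x\with B\leq b$: left-multiplying by $q\ldd b$ and using $(q\ldd b)\with b\leq q$ gives $(q\ldd b)\with x\with(e\rdd r)\with(c\rdd q)\leq q$, whence $(q\ldd b)\with x\with(e\rdd r)\leq q\ldd(c\rdd q)=c$ by the defining equation of the back diagonal $c$; transposing along $(-\with(e\rdd r))\dv(-\ldd(e\rdd r))$ yields $(q\ldd b)\with x\leq c\ldd(e\rdd r)$, so by the counit $(q\ldd b)\with x\with B\leq\big(c\ldd(e\rdd r)\big)\with B\leq q$, and finally $b=(q\ldd b)\rdd q$ gives $x\with B\leq b$. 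This produces the inclusion of the defining down-sets, hence $b\ldd A\leq b\ldd B$, and with it the desired equality.

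The crux, and the only genuinely non-formal point, is this last transposition. The two closed forms $A$ and $B$ of the triple composite are not equal as elements of $\sQ$ (only $A\leq B$ holds), so associativity cannot be read off from a bare currying manipulation; it is forced precisely by feeding the defining equations $b=(q\ldd b)\rdd q$ and $q\ldd(c\rdd q)=c$ of the back diagonals into the adjunction, mirroring the role the diagonal equations play in Lemma \ref{diagonal-comp}. I expect this step to be where the care is needed; items (1), (2) and (4) are formal consequences of the two formulas for $\bullet$ and the adjunction properties of the implications.
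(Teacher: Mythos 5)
Your proof is correct, and there is in fact no in-paper argument to compare it against: the paper states Lemma \ref{back-diagonal-comp} without proof, as part of the construction of $\BsQ$, deferring the general theory of back diagonals to the cited reference \cite{Shen2016a}. Your verification is complete. Parts (1), (2) and (4) do follow formally, as you say: the fixed-point claim in (1) is the standard identity $fgf=f$ for the antitone Galois connection $(p\ldd-,\,-\rdd p)$, and (4) is exactly the statement that $(q\ldd b)\rdd(-)$ and $(-)\ldd(c\rdd q)$ are right adjoints, combined with Lemma \ref{back-diagonal-property}\ref{back-diagonal-property:inf} to ensure the infima live in the hom-sets. Your diagnosis of (3) as the only non-formal point is also accurate: the two curried forms $A=(e\rdd r)\with(c\rdd q)$ and $B=(c\ldd(e\rdd r))\rdd q$ satisfy only $A\leq B$ in a general quantale, so associativity cannot be read off by currying alone; it genuinely requires feeding the back-diagonal equations $q\ldd(c\rdd q)=c$ and $(q\ldd b)\rdd q=b$ through the adjunctions, and each transposition in your chain $x\with A\leq b\implies x\with B\leq b$ checks out. (This is the mirror image of what happens for diagonals in Lemma \ref{diagonal-comp}, where the composite has two closed forms that are honestly equal, which is why that case feels cheaper.) The only stylistic alternative worth mentioning is the conceptual route of \cite{Shen2016a}, where back diagonals are organized so that composition is inherited from a composition of maps and associativity comes for free; your computation is the elementary, self-contained counterpart of that, and for the one-object (quantale) case it is arguably the more transparent argument.
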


From Lemmas \ref{back-diagonal-property} and \ref{back-diagonal-comp} we actually obtain a quantaloid $\BsQ$ from each quantale $\sQ$, called the quantaloid of \emph{back diagonals} of $\sQ$:
\begin{itemize}
\item objects of $\BsQ$ are elements $p,q,r,\dots$ of $\sQ$;
\item for $p,q\in\sQ$, morphisms from $p$ to $q$ in $\BsQ$ are back diagonals from $p$ to $q$;
\item the composition of back diagonals $b\in\BsQ(p,q)$ and $c\in\BsQ(q,r)$ is given by $c\bullet b$;
\item the identity back diagonal on $q\in\sQ$ is $q$ itself;
\item each hom-set $\BsQ(p,q)$ is equipped with the \emph{reversed} order inherited from $\sQ$.
\end{itemize}
It should be noted that the construction of $\BsQ$ makes sense not only for a quantale $\sQ$, but also for a general quantaloid $\CQ$; see \cite{Shen2016a}.

For each $p,q\in\sQ$, let
\[\KsQ(p,q):=\{b\in\BsQ\mid p\vee q\leq b\};\]
that is, $\KsQ(p,q)$ consists of back diagonals from $p$ to $q$ that are above both $p$ and $q$. Then, it is easy to see that $\KsQ$ is a subquantaloid of $\BsQ$, and we write
\[b:p\lar q\]
for a morphism $b\in\KsQ(p,q)$; that is,
\[b:p\lar q\iff p\vee q\leq b\quad\text{and}\quad p\ldd(b\rdd p)=b=(q\ldd b)\rdd q.\]
Note that if $\sQ$ is integral, then $p\vee q\leq b$ is implied by $p\ldd(b\rdd p)=b=(q\ldd b)\rdd q$, and thus
\[\KsQ=\BsQ\]
in this case.

\begin{exmp}
For Lawvere's quantale $[0,\infty]$ given in Example \ref{quantale-exmp}\ref{quantale-exmp:Lawvere},
\[\BB_*[0,\infty](p,q)=\begin{cases}
[0,p\wedge q] & \text{if}\ p,q<\infty,\\
\{0,\infty\} & \text{if}\ p=q=\infty,\\
\{0\}  & \text{else}
\end{cases}\]
for all $p,q\in[0,\infty]$, and
\[  c\bullet b=\begin{cases}
0 & \text{if}\ c\wedge b<\infty\ \text{and}\ q=\infty,\\
\max\{0,c-q+b\}  & \text{else}.
\end{cases}\]
for all $b:p\lar q$ and $c:q\lar r$.
\end{exmp}

Now let us look at categories enriched in the quantaloid $\KsQ$. As a direct consequence of Lemma \ref{back-diagonal-property}\ref{back-diagonal-property:involution}, $\KsQ$ is an involutive quantaloid with the involution lifted from $\sQ$; that is, the involution $(-)^{\circ}$ on $\sQ$ actually gives rise to an involution on $\KsQ$. By definition, a $\KsQ$-category consists of a set $X$, a map $|\text{-}|:X\to\sQ$ and a map $\be:X\times X\to\sQ$ such that
\begin{enumerate}[label=(\arabic*)]
\item \label{KsQ-cat:str} $|x|\vee |y|\leq\be(x,y)$,
\item \label{KsQ-cat:reg} $|x|\ldd(\be(x,y)\rdd|x|)=\be(x,y)=(|y|\ldd\be(x,y))\rdd |y|$,
\item \label{KsQ-cat:ref} $\be(x,x)\leq|x|$,
\item \label{KsQ-cat:tran} $\be(x,z)\leq\be(x,y)\ldd(\be(y,z)\rdd|y|)=(|y|\ldd\be(x,y))\rdd\be(y,z)$
\end{enumerate}
for all $x,y,z\in X$, where \ref{KsQ-cat:str} and \ref{KsQ-cat:reg} follows from $\be(x,y)\in\KsQ(|x|,|y|)$. Note that the combination of \ref{KsQ-cat:str} and \ref{KsQ-cat:ref} forces
\begin{equation} \label{bexx=x}
\be(x,x)=|x|
\end{equation}
for all $x\in X$, and thus a $\KsQ$-category is precisely given by a map $\be:X\times X\to\sQ$ such that (cf. Definition \ref{dissim-def})
\begin{itemize}
\item $\be(x,y)\geq\be(x,x)\vee\be(y,y)$,
\item $\be(x,x)\ldd(\be(x,y)\rdd \be(x,x))=\be(x,y)=(\be(y,y)\ldd\be(x,y))\rdd\be(y,y)$,
\item $\be(x,z)\leq\be(x,y)\ldd(\be(y,z)\rdd\be(y,y))=(\be(y,y)\ldd\be(x,y))\rdd\be(y,z)$
\end{itemize}
for all $x,y,z\in X$. Therefore, a $\sQ$-valued dissimilarity $\be$ on a set $X$ is exactly a $\KsQ$-category satisfying
\begin{itemize}
\item $\be(x,y)=\be(y,x)^{\circ}$
\end{itemize}
for all $x,y\in X$; that is, a symmetric $\KsQ$-category:

\begin{thm} \label{dissim-BsQ-Cat}
A set equipped with a $\sQ$-valued dissimilarity is precisely a symmetric $\KsQ$-category.
\end{thm}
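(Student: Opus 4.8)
The plan is to prove the two halves of the statement (``a set with a $\sQ$-valued dissimilarity \emph{is} a symmetric $\KsQ$-category'' and conversely) as mutually inverse translations between the defining data, rather than by transporting along some auxiliary functor. The translation is already prepared by the computation preceding the statement, whose upshot is that the two $\KsQ$-category conditions \ref{KsQ-cat:str} and \ref{KsQ-cat:ref} render the type map redundant, namely $\be(x,x)=|x|$ for all $x$ (Equation \eqref{bexx=x}). Once this identification is installed, a symmetric $\KsQ$-category is carried by the single map $\be:X\times X\to\sQ$, and the whole argument amounts to checking that the two axiom lists coincide under the substitution $|x|\mapsto\be(x,x)$.

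For the direction from dissimilarities to categories, I would start from a $\sQ$-valued dissimilarity $\be$ and set $|x|:=\be(x,x)$; by \ref{dissim-def:sym} each $\be(x,x)$ is hermitian, so the types land in the symmetric objects of $\KsQ$ (as required by Remark \ref{sym-Q-cat-Stubbe} for a $\KsQ$-category to be symmetric). The crux is to certify that each $\be(x,y)$ is a morphism $\be(x,y):|x|\lar|y|$ of $\KsQ$. Its membership in $\BsQ(|x|,|y|)$---the back-diagonal equation $|x|\ldd(\be(x,y)\rdd|x|)=\be(x,y)=(|y|\ldd\be(x,y))\rdd|y|$---splits into two halves: the left equality is verbatim the regularity axiom \ref{dissim-def:reg}, while the right equality is its involuted counterpart, recorded in the ``Moreover'' remark after Definition \ref{dissim-def} as a consequence of \ref{dissim-def:reg} and \ref{dissim-def:sym}. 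The further requirement $|x|\vee|y|\leq\be(x,y)$ that upgrades the morphism from $\BsQ$ to $\KsQ$ is exactly strictness \ref{dissim-def:str}. It then remains to read off the $\KsQ$-category axioms: reflexivity \ref{KsQ-cat:ref} holds with equality because $\be(x,x)=|x|$; the two equal expressions in the composition axiom \ref{KsQ-cat:tran} are precisely the two forms of the composite $\be(y,z)\bullet\be(x,y)$ from \eqref{back-diagonal-comp-def}; and the accompanying inequality is the contrapositive transitivity \ref{dissim-def:tran} (again with its involuted form from the ``Moreover'' remark). Symmetry of the $\KsQ$-category is \ref{dissim-def:sym} itself.

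The reverse direction runs the same dictionary backwards: starting from a symmetric $\KsQ$-category $(X,|\text{-}|,\be)$, I would substitute $\be(x,x)=|x|$ into \ref{KsQ-cat:str}--\ref{KsQ-cat:tran} to recover, respectively, strictness \ref{dissim-def:str}, regularity \ref{dissim-def:reg} (the left half of \ref{KsQ-cat:reg}), and contrapositive transitivity \ref{dissim-def:tran}, with symmetry supplied by the definition of a symmetric enriched category; this exhibits $\be$ as a $\sQ$-valued dissimilarity. Since neither assignment alters the map $\be$, they are mutually inverse, giving the claimed identification. The entire proof is the order-dual of Theorem \ref{sim-DsQ-Cat}, with the reversed order on the hom-lattices of $\BsQ$ systematically trading $\wedge,\with,\leq$ for $\vee$, the implications, and $\geq$.

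I expect no step to present a genuine obstacle, since the preceding discussion has already performed the substantive computations (in particular the well-definedness and composition of back diagonals in Lemmas \ref{back-diagonal-property} and \ref{back-diagonal-comp}). The single point demanding care is the bookkeeping of the involution: ensuring that the \emph{right-hand} halves of the regularity and transitivity identities---those written with $\rdd$ and the type on the right---are genuinely \emph{derived} from the symmetry axiom \ref{dissim-def:sym} rather than silently assumed. Invoking the two ``Moreover'' observations after Definition \ref{dissim-def} discharges exactly this obligation and closes the argument.
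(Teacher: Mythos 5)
Your proposal is correct and follows essentially the same route as the paper: the paper's own argument is precisely the dictionary you describe — axioms \ref{KsQ-cat:str} and \ref{KsQ-cat:ref} force $\be(x,x)=|x|$ (Equation \eqref{bexx=x}), membership in $\KsQ(|x|,|y|)$ unpacks to strictness plus regularity (with the right-hand half supplied by the ``Moreover'' remark via symmetry), the composition axiom unpacks to contrapositive transitivity via \eqref{back-diagonal-comp-def}, and symmetric enrichment matches \ref{dissim-def:sym}. Your attention to the involution bookkeeping (hermitian types, and deriving rather than assuming the involuted halves of \ref{dissim-def:reg} and \ref{dissim-def:tran}) is exactly the point the paper handles in the remark following Definition \ref{dissim-def}.
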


\begin{rem} \label{dissim-bot-2}
It is clear that $\KsQ(\bot,\bot)=\BsQ(\bot,\bot)$ for every quantale $\sQ$ and an element of $\KsQ(\bot,\bot)$ is exactly a regular element of $\sQ$ (see Equation \eqref{regular-def}). Furthermore, the quantale $\KsQ(\bot,\bot)$ is integral and a rigid $\sQ$-valued dissimilarity (see Remark \ref{dissim-bot}) on a set $X$ is precisely a symmetric category structure enriched in the involutive quantale $\KsQ(\bot,\bot)$.
\end{rem}

\section{Similarity vs. dissimilarity} \label{Sim-vs-Dissim}

In classical logic, the negation of a similarity relation is a dissimilarity relation, and vice versa. (cf. Examples \ref{sim-2} and \ref{dissim-2}). It is natural to ask whether it still holds in the quantale-valued setting; that is, whether the negation of a $\sQ$-valued dissimilarity is a $\sQ$-valued similarity, and vice versa. With the help of \emph{lax functors} between the quantaloids $\HsQ$ and $\KsQ$, in this section we provide some partial answers to this question in the case that $\sQ$ is a divisible quantale, a frame or a Girard quantale.

Before proceeding on, we would like to remind the readers of the fact that although being of unequivocal importance, inquiring what is really meant by \emph{negation} remains a sensitive question in fuzzy set theory, and it will not be discussed here. In what follows we just focus on two kinds of negations in a quantale, one of which is determined by the bottom element of the quantale, and the other is the \emph{linear negation} in a Girard quantale. Both of the negations under concern are of residuation-type; that is, they are determined by the operator $\with$ via adjoint property.

Recall that a \emph{lax functor} \cite{Hofmann2014,Street1972a} $F:\CQ\to\CR$ of quantaloids is given by maps
\[F:\ob\CQ\to\ob\CR\quad\text{and}\quad F_{p,q}:\CQ(p,q)\to\CR(Fp,Fq)\]
for all $p,q\in\ob\CQ$ (with $F_{p,q}$ usually written as $F$ for short), such that
\begin{enumerate}[label=(\arabic*)]
\item \label{lax-functor:mono} $F_{p,q}$ is monotone,
\item \label{lax-functor:mor} $Fv\circ Fu\leq F(v\circ u)$,
\item \label{lax-functor:unit} $1_{Fq}\leq F1_q$
\end{enumerate}
for all $p,q,r\in\ob\CQ$ and $\CQ$-arrows $u:p\to q$, $v:q\to r$. A lax functor $F:\CQ\to\CR$ becomes a homomorphism of quantaloids if it preserves suprema of $\CQ$-arrows and the inequalities ``$\leq$'' in \ref{lax-functor:mor} and \ref{lax-functor:unit} are replaced by ``$=$'', and a homomorphism of quantaloids becomes an \emph{isomorphism} of quantaloids if so is the underlying functor.

Every lax functor $F:\CQ\to\CR$ of quantaloids induces a functor
\[\QCat\to\RCat,\]
which assigns to each $\CQ$-category $(X,|\text{-}|,\al)$ an $\CR$-category $(X,F|\text{-}|,F\al)$,  and each $\CQ$-functor $f:(X,|\text{-}|,\al)\to(Y,|\text{-}|,\be)$ will be mapped to an $\CR$-functor $f:(X,F|\text{-}|,F\al)\to(Y,F|\text{-}|,F\be)$. Therefore, the existence of a lax functor
\[\HsQ\to\KsQ\]
would allow us to construct a $\KsQ$-category from a $\HsQ$-category, and vice versa. Furthermore, if a lax functor
\[F:\HsQ\to\KsQ\]
preserves the involution of $\sQ$ in the sense that
\[F(d:p\rqa q)^{\circ}=(Fd:Fp\lar Fq)^{\circ},\quad\text{i.e.,}\quad Fq^{\circ}=(Fq)^{\circ}\quad\text{and}\quad Fd^{\circ}=(Fd)^{\circ}\]
for all $p,q\in\sQ$, $d\in\HsQ(p,q)$, then each $\sQ$-valued similarity would generate a $\sQ$-valued dissimilarity, and vice versa.


\subsection{When $\sQ$ is a divisible quantale}

In each quantale $\sQ$, we may define
\begin{equation} \label{negation-def}
\neg_l q:=\bot\ldd q\quad\text{and}\quad \neg_r q:=q\rdd\bot
\end{equation}
as the \emph{left} and \emph{right negations} of $q$, respectively, which can be unified to
\begin{equation} \label{negation-commutative-def}
\neg q:=\neg_l q=\neg_r q=q\ra\bot
\end{equation}
if the bottom element $\bot$ is \emph{cyclic} in the sense that
\[\bot\ldd q=q\rdd\bot\]
for all $q\in\sQ$. It is clear that the negation operators on $\sQ$ admit pointwise extensions to maps $X\times X\to\sQ$. The main result of this subsection is:

\begin{prop} \label{divisible-neg-dissim}
If $\sQ$ is a divisible quantale with the bottom element $\bot$ being cyclic, then the negation $\neg\be$ of each $\sQ$-valued dissimilarity $\be$ is a $\sQ$-valued similarity.
\end{prop}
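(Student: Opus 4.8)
The plan is to exhibit the left/right negation $\neg$ (which, by cyclicity of $\bot$, is the single operator $\neg t=\bot\ldd t=t\rdd\bot$) as a \emph{lax functor} $F\colon\KsQ\to\HsQ$ and then read off the assertion from the categorical dictionary: a lax functor of involutive quantaloids that preserves the involution carries symmetric categories to symmetric categories, so it sends the symmetric $\KsQ$-category $(X,\be)$ (a $\sQ$-valued dissimilarity, by Theorem~\ref{dissim-BsQ-Cat}) to a symmetric $\HsQ$-category, i.e.\ a $\sQ$-valued similarity (Theorem~\ref{sim-DsQ-Cat}). Concretely I set $Fq:=\neg q$ on objects and $Fb:=\neg b$ on morphisms; since the types of $(X,\be)$ are $|x|=\be(x,x)$, the induced category is exactly $(X,\neg\be)$, which is what must be shown to be a similarity.

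Several of the defining properties of a lax functor are immediate. Because $\sQ$ is divisible it is integral, and a morphism $\neg p\rqa\neg q$ of $\HsQ$ is simply \emph{any} element below $\neg p\wedge\neg q$; as $p\vee q\le b$ forces $\neg b\le\neg p\wedge\neg q$ by antitonicity of $\neg$, the rule $b\mapsto\neg b$ indeed lands in $\HsQ(\neg p,\neg q)$. Monotonicity of $F$ holds because the order on $\KsQ$ is the reverse of that of $\sQ$ while $\neg$ reverses order, and the unit law holds with equality since $F1_q=\neg q=1_{\neg q}$. Finally $F$ preserves the involution: from \eqref{invo-imp}, $\bot^{\circ}=\bot$ and cyclicity one obtains $(\neg b)^{\circ}=b^{\circ}\rdd\bot=\neg(b^{\circ})$ (and likewise on objects), which is exactly what makes the induced functor preserve symmetry. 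With these in hand, the strictness axiom \ref{sim-def:str} and the symmetry axiom \ref{sim-def:sym} of $\neg\be$ are free.

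The one substantial step is \emph{lax functoriality}, $Fc\diamond Fb\le F(c\bullet b)$, i.e.\ $(\neg c\ldd\neg q)\with\neg b\le\neg(c\bullet b)$ for back diagonals $b\colon p\lar q$ and $c\colon q\lar r$; equivalently, in the direct formulation via Proposition~\ref{sim-divisible}, this is the transitivity axiom \ref{sim-def:tran} for $\al:=\neg\be$, namely $(\neg\be(y,z)\ldd\neg\be(y,y))\with\neg\be(x,y)\le\neg\be(x,z)$. I would attack it by rewriting $\neg(c\bullet b)$ through the cyclic-negation adjunction, reducing the goal to $(c\bullet b)\with(\neg c\ldd\neg q)\with\neg b\le\bot$, and then using the two presentations $c\bullet b=b\ldd(c\rdd q)=(q\ldd b)\rdd c$ of the back-diagonal composite together with the divisibility identities $q=c\with(c\rdd q)$ and $\be(y,y)=\be(y,z)\with(\be(y,z)\rdd\be(y,y))$ to peel off factors against $t\with\neg t\le\bot$. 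The contrapositive-transitivity axiom \ref{dissim-def:tran}, in its adjoint form $\be(x,z)\with(\be(y,z)\rdd\be(y,y))\le\be(x,y)$, is what should supply the crucial cancellation.

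The main obstacle is precisely this inequality. Since $\neg$ is order-reversing and, in the absence of a Girard structure, only \emph{laxly} involutive ($t\le\neg\neg t$ in general), one cannot obtain the similarity transitivity by naively dualizing the dissimilarity transitivity; the double negations that appear must be absorbed using divisibility, and the handedness of the left and right implications—which would otherwise obstruct the computation in the non-commutative case—is controlled exactly by the hypothesis that $\bot$ is cyclic. I therefore expect the verification of \ref{sim-def:str}, \ref{sim-def:sym} and the involution-compatibility to be routine, with essentially all of the work concentrated in the transitivity/lax-functoriality estimate.
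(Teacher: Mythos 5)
Your plan is exactly the paper's: it isolates your lax-functor claim as Lemma~\ref{neg-BQ-divisible} (proved there for $\neg_l$ and $\neg_r$ separately, with cyclicity of $\bot$ needed only afterwards, to merge the two negations and to get involution-compatibility via Equation~\eqref{invo-imp}), and then reads off Proposition~\ref{divisible-neg-dissim} just as you describe. Your handling of the routine parts --- that divisibility makes $\HsQ(\neg p,\neg q)$ the whole down-set of $\neg p\wedge\neg q$, monotonicity, units, preservation of the involution, and hence of \ref{sim-def:str} and \ref{sim-def:sym} --- is correct and matches the paper.

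However, the step that you yourself flag as carrying all the weight is never actually proved, and the ingredient list you give for it is not sufficient as stated, so there is a genuine gap. Peeling factors against $t\with\neg t\leq\bot$ cannot even start until the term $\neg c\ldd\neg q$ has been rewritten as a negation, and that rewriting is the real crux. The paper's computation \eqref{neg-c-diamond-neg-b} does this via $\neg_l c\ldd\neg_l q=\neg_l(\neg_l q\with c)=\neg_l\neg_l(c\rdd q)$, and the identity $\neg_l q\with c=\neg_l(c\rdd q)$ used there needs three things: the divisibility equation $q=c\with(c\rdd q)$ (which you list); the inequality $\neg_l(c\rdd q)\leq c$, which comes from the back-diagonal equation $q\ldd(c\rdd q)=c$ of \eqref{back-diagonal-def}, i.e.\ from the regularity axiom \ref{dissim-def:reg}, which your sketch never invokes; and then a \emph{second} application of divisibility, now to the pair $\neg_l(c\rdd q)\leq c$, giving $(\neg_l(c\rdd q)\ldd c)\with c=\neg_l(c\rdd q)$. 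Once this identity is available, the cancellations you describe do close the argument (your use of \ref{dissim-def:tran} in adjoint form plays the role of the paper's adjunction $(b\ldd(c\rdd q))\with(c\rdd q)\leq b$); without it, nothing in your list produces the term $\neg_l\neg_l(c\rdd q)$ that the final cancellation acts on. A smaller inaccuracy: lax functoriality for arbitrary $b:p\lar q$, $c:q\lar r$ and axiom \ref{sim-def:tran} for $\neg\be$ are not ``equivalent'' --- if you prove the former (the paper's route) you cannot invoke \ref{dissim-def:tran}, since $b$ and $c$ need not come from any dissimilarity; only if you prove the latter directly may you use it.
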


Proposition \ref{divisible-neg-dissim} follows immediately from Equation \eqref{invo-imp} and the following lemma:

\begin{lem} \label{neg-BQ-divisible}
If $\sQ$ is a divisible quantale, then both the assignments
\[(b:p\lar q)\mapsto(\neg_l b:\neg_l p\rqa\neg_l q)\quad\text{and}\quad(b:p\lar q)\mapsto(\neg_r b:\neg_r p\rqa\neg_r q)\]
define lax functors $\neg_l,\neg_r:\KsQ\to\HsQ$.
\end{lem}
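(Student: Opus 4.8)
I need to show that $\neg_l$ (and symmetrically $\neg_r$) sends a back diagonal $b:p\lar q$ to a diagonal $\neg_l b:\neg_l p\rqa\neg_l q$, and that this assignment is a lax functor from $\KsQ$ to $\HsQ$. There are two separate things to verify: first, that the objects and arrows land where claimed (i.e.\ $\neg_l b$ really is a \emph{genuine} morphism in $\HsQ$), and second, the three lax-functor axioms \ref{lax-functor:mono}--\ref{lax-functor:unit}. I will carry these out in that order, exploiting divisibility of $\sQ$ throughout.

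\emph{Arrows land in $\HsQ$.} Recall from \eqref{back-diagonal-def} that $b:p\lar q$ means $p\vee q\leq b$ together with $p\ldd(b\rdd p)=b=(q\ldd b)\rdd q$, and from the divisible case that a morphism $\neg_l p\rqa\neg_l q$ in $\HsQ$ is simply an element $d\leq\neg_l p\wedge\neg_l q$ (since when $\sQ$ is divisible, $\HsQ(p',q')=\{d\mid d\leq p'\wedge q'\}$). So I must check two facts. The domain/codomain condition $\neg_l b\leq\neg_l p\wedge\neg_l q$ is immediate from $p\vee q\leq b$ and antitonicity of $\neg_l=\bot\ldd(-)$. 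The genuine content is that no diagonal equation must be \emph{separately} verified here, because in the divisible case every element below $p'\wedge q'$ is automatically a diagonal; this is the key simplification that divisibility buys me. Thus the arrow condition reduces to the single inequality $\neg_l b\leq\neg_l p\wedge\neg_l q$.

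\emph{Lax-functor axioms.} Monotonicity \ref{lax-functor:mono} needs care with the variances: the order on $\KsQ(p,q)$ is the \emph{reversed} order of $\sQ$, while $\HsQ$ carries the inherited order, and $\neg_l$ is order-reversing on $\sQ$; these three reversals compose to make $\neg_l:\KsQ(p,q)\to\HsQ(\neg_l p,\neg_l q)$ monotone, which I will spell out. For the unit axiom \ref{lax-functor:unit} I must show $\neg_l q\leq\neg_l q$ viewed as the comparison $1_{\neg_l q}\leq\neg_l(1_q)$, i.e.\ $\neg_l q\leq\neg_l q$, which is trivial since the identity on $q$ in $\KsQ$ is $q$ itself and its image is $\neg_l q$, the identity on $\neg_l q$ in $\HsQ$. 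The substantive axiom is \ref{lax-functor:mor}: given $b:p\lar q$ and $c:q\lar r$, I must prove
\[\neg_l c\diamond\neg_l b\leq\neg_l(c\bullet b),\]
where $\diamond$ is diagonal composition \eqref{diagonal-comp-def}, $\bullet$ is back-diagonal composition \eqref{back-diagonal-comp-def}, and $\leq$ is again in $\HsQ$'s inherited order. Unwinding, this asks (using $c\bullet b=b\ldd(c\rdd q)$) that
\[(\neg_l c\ldd \neg_l q)\with\neg_l b\;\leq\;\neg_l\bigl(b\ldd(c\rdd q)\bigr).\]

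\emph{The main obstacle.} The inequality in \ref{lax-functor:mor} is where the real work lies. I expect to prove it by repeatedly applying the adjunction $p\with q\leq r\iff p\leq r\ldd q\iff q\leq p\rdd r$ together with the divisibility law \eqref{divisible-def}, rewriting negations $\neg_l(-)=\bot\ldd(-)$ and pushing implications past $\with$. A clean route is to show the transposed inequality $\bigl((\neg_l c\ldd\neg_l q)\with\neg_l b\bigr)\with\bigl(b\ldd(c\rdd q)\bigr)\leq\bot$, i.e.\ that the left side annihilates against $c\bullet b$, using that $\neg_l b=\bot\ldd b$ and $\neg_l c=\bot\ldd c$ are the largest elements whose $\with$-product with $b$, resp.\ $c$, falls below $\bot$. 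Divisibility lets me replace the composites $b\ldd(c\rdd q)$ by their divisible normal forms and cancel the $q$-factors cleanly. The $\neg_r$ case is entirely symmetric, obtained by interchanging the roles of the left and right implications $\ldd,\rdd$ and of $\bot\ldd(-),(-)\rdd\bot$; I will note this at the end rather than repeating the computation.
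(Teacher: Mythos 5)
Your setup is sound and, up to the last axiom, follows the same route as the paper: membership $\neg_l b\in\HsQ(\neg_l p,\neg_l q)$ comes from $p\vee q\leq b$ plus the divisible description of the hom-sets, monotonicity and the unit axiom are essentially trivial (though your ``three reversals'' count is off by one---only the reversed hom-order on $\KsQ$ and the antitonicity of $\neg_l$ are reversals---the conclusion is right), and the composition axiom is correctly transposed, via $x\with y\leq\bot\iff x\leq\bot\ldd y$, to the annihilation statement
\[
\bigl((\neg_l c\ldd\neg_l q)\with\neg_l b\bigr)\with\bigl(b\ldd(c\rdd q)\bigr)\leq\bot.
\]
This is exactly the transposition the paper performs. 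The genuine gap is that you stop here: the annihilation is never proved, and the route you sketch for it would not go through as described. You propose to use that $\neg_l c$ is the largest element whose product with $c$ falls below $\bot$; but in the displayed product $c$ never occurs as a factor adjacent to $\neg_l c$---it sits inside the implications $\neg_l c\ldd\neg_l q$ and $c\rdd q$---and since $\sQ$ need not be commutative you cannot rearrange factors to bring them together. Likewise, ``replacing $b\ldd(c\rdd q)$ by its divisible normal form and cancelling the $q$-factors'' is not an argument: divisibility supplies no such normal form for this composite.

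What is actually needed, and what the paper's proof supplies, is the identity $\neg_l q\with c=\neg_l(c\rdd q)$ (equivalently, $\neg_l c\ldd\neg_l q=\neg_l\neg_l(c\rdd q)$ after the general rewriting $\neg_l c\ldd\neg_l q=\neg_l(\neg_l q\with c)$). Its proof uses the hypotheses on $c$ in an essential way: from $c\in\KsQ(q,r)$ one has $q\leq c$ and the back-diagonal equation $q\ldd(c\rdd q)=c$, whence $\neg_l(c\rdd q)\leq q\ldd(c\rdd q)=c$; then two applications of the divisibility law \eqref{divisible-def} give
\[
\neg_l q\with c=\neg_l(c\with(c\rdd q))\with c=(\neg_l(c\rdd q)\ldd c)\with c=\neg_l(c\rdd q).
\]
With this in hand the annihilation follows from two genuinely easy facts: $\neg_l b\with(b\ldd(c\rdd q))\leq\neg_l(c\rdd q)$ (this is where annihilation against $b$ really is used), and then $\neg_l\neg_l(c\rdd q)\with\neg_l(c\rdd q)\leq\bot$. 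So the missing idea is precisely this divisibility computation; it is the only step where divisibility (beyond the hom-set description) and the $\KsQ$-conditions on $c$ enter, and your plan does not contain it.
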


\begin{proof}
We only verify that $\neg_l:\KsQ\to\HsQ$ is a lax functor, and the lax functoriality of $\neg_r$ can be obtained dually.

If $b\in\KsQ(p,q)$, then it follows from $b\geq p\vee q$ that
\[\neg_l b\leq\neg_l p\wedge\neg_l q.\]
Thus $\neg_l b\in\HsQ(\neg_l p,\neg_l q)$ by the divisibility of $\sQ$.

Since
$\neg_l$ is clearly monotone on hom-sets and preserves identities,  it remains to prove that
\[\neg_l c\diamond\neg_l b\leq\neg_l(c\bullet b)\]
for all morphisms $b:p\lar q$, $c:q\lar r$ in $\KsQ$. Indeed, since $q\leq c$ and $\neg_l(c\rdd q)\leq q\ldd(c\rdd q)=c$, we have
\begin{align*}
\neg_l q\with c&=\neg_l(c\with(c\rdd q))\with c&(q\leq c)\\
&=(\neg_l(c\rdd q)\ldd c)\with c\\
&=\neg_l(c\rdd q),&(\neg_l(c\rdd q)\leq c)
\end{align*}
and consequently
\begin{align}
\neg_l c\diamond\neg_l b&=(\neg_l c\ldd\neg_l q)\with\neg_l b \nonumber\\
&=\neg_l(\neg_l q\with c)\with\neg_l b \nonumber\\
&=\neg_l\neg_l(c\rdd q)\with\neg_l b \label{neg-c-diamond-neg-b}
\end{align}
by Equation \eqref{diagonal-comp-def}. It follows that
\[(\neg_l c\diamond\neg_l b)\with(b\ldd(c\rdd q))=\neg_l\neg_l(c\rdd q)\with\neg_l b\with(b\ldd(c\rdd q))\leq\bot,\]
and therefore
\[\neg_l c\diamond\neg_l b\leq\neg_l(b\ldd(c\rdd q))=\neg_l(c\bullet b)\]
by Equation \eqref{back-diagonal-comp-def}, as desired.
\end{proof}

 \begin{cor} \label{neg-BQCat-divisible}
If $\sQ$ is a divisible quantale, then both the assignments
\[(X,\be)\mapsto(X,\neg_l\be)\quad\text{and}\quad(X,\be)\mapsto(X,\neg_r\be)\]
define functors $\KsQ\text{-}\Cat\to\HsQ\text{-}\Cat$.
\end{cor}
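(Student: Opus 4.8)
The plan is to read this off from Lemma \ref{neg-BQ-divisible} by invoking the general principle, recalled at the start of this section, that every lax functor $F:\CQ\to\CR$ of quantaloids induces a functor $\QCat\to\RCat$: on objects it sends a $\CQ$-category $(X,|\text{-}|,\al)$ to the $\CR$-category $(X,F|\text{-}|,F\al)$, and on morphisms it leaves the underlying map of a $\CQ$-functor unchanged. Since Lemma \ref{neg-BQ-divisible} exhibits $\neg_l$ and $\neg_r$ as lax functors $\KsQ\to\HsQ$ whenever $\sQ$ is divisible, each of them induces a functor $\KsQ\text{-}\Cat\to\HsQ\text{-}\Cat$, and it remains only to match these induced functors with the assignments in the statement.

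For the object part I would apply the induced functor of $\neg_l$ to a $\KsQ$-category. By Theorem \ref{dissim-BsQ-Cat} and Equation \eqref{bexx=x}, a $\KsQ$-category is a map $\be:X\times X\to\sQ$ whose type map is recovered as $|x|=\be(x,x)$. Its image is the $\HsQ$-category with underlying set $X$, type map $x\mapsto\neg_l|x|=\neg_l\be(x,x)$ and hom-arrows $\neg_l\be(x,y)$; since the negations act pointwise, $\neg_l\be(x,x)=(\neg_l\be)(x,x)$, so this image is exactly $(X,\neg_l\be)$, as claimed. The same computation with $\neg_r$ gives $(X,\neg_r\be)$. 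On morphisms both induced functors act as the identity on the underlying map, so preservation of identities and composites is automatic.

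I expect no genuine obstacle here: all the substantive work---the monotonicity, the unit inequality, and the lax compatibility $\neg_l c\diamond\neg_l b\leq\neg_l(c\bullet b)$ with composition---has already been discharged in Lemma \ref{neg-BQ-divisible}. I would also note that, unlike Proposition \ref{divisible-neg-dissim}, the corollary requires no cyclicity of $\bot$, precisely because it treats $\neg_l$ and $\neg_r$ as two separate lax functors rather than unifying them into a single negation $\neg$ via Equation \eqref{invo-imp}.
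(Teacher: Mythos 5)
Your proposal is correct and matches the paper's own (implicit) argument exactly: the corollary is read off from Lemma \ref{neg-BQ-divisible} via the general principle, stated at the start of Section \ref{Sim-vs-Dissim}, that a lax functor $F:\CQ\to\CR$ induces a functor $\QCat\to\RCat$ sending $(X,|\text{-}|,\al)$ to $(X,F|\text{-}|,F\al)$, together with the identification of $\KsQ$-categories with maps $\be$ satisfying $|x|=\be(x,x)$ from Equation \eqref{bexx=x}. Your closing observation that no cyclicity of $\bot$ is needed here, precisely because $\neg_l$ and $\neg_r$ are kept separate rather than unified as in Proposition \ref{divisible-neg-dissim}, is also accurate.
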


\subsection{When $\sQ$ is a frame}

In the case that $\sQ$ is a frame, the negation of a $\sQ$-valued dissimilarity is a $\sQ$-valued similarity, and vice versa:

\begin{prop} \label{frame-neg-sim-dissim}
If $\sQ$ is a frame, then
\begin{enumerate}[label={\rm(\arabic*)}]
\item \label{frame-neg-sim-dissim:neg-dissim} the negation $\neg\be$ of each $\sQ$-valued dissimilarity $\be$ is a $\sQ$-valued similarity, and
\item \label{frame-neg-sim-dissim:neg-sim} the negation $\neg\al$ of each $\sQ$-valued similarity $\al$ is a $\sQ$-valued dissimilarity.
\end{enumerate}
\end{prop}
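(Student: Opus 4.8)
The first claim is not new. A frame is a commutative, divisible quantale (Example~\ref{quantale-exmp}\ref{quantale-exmp:frame}), and commutativity forces $\bot\ldd q=q\rdd\bot$, so the bottom element is cyclic. Hence part~\ref{frame-neg-sim-dissim:neg-dissim} is an immediate instance of Proposition~\ref{divisible-neg-dissim}, and only the converse, part~\ref{frame-neg-sim-dissim:neg-sim}, requires genuine work. Throughout I would exploit that in a frame one has $\with=\wedge$, $k=\top$, a trivial involution, $\ldd=\rdd={\ra}$ the Heyting implication, and $\neg q=q\ra\bot$ the pseudocomplement; the two residuation laws $w\wedge a\le c\iff w\le a\ra c$ and $w\le\neg a\iff w\wedge a=\bot$ will be essentially the only tools needed.

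Fix a $\sQ$-valued similarity $\al$ and set $\be:=\neg\al$. By Proposition~\ref{sim-frame}, $\al$ is symmetric, $\al(x,y)=\al(y,x)$, and transitive, $\al(y,z)\wedge\al(x,y)\le\al(x,z)$; specialising transitivity and using symmetry together with idempotency of $\wedge$ yields strictness $\al(x,y)\le\al(x,x)\wedge\al(y,y)$. The plan is to verify the four dissimilarity axioms of Definition~\ref{dissim-def} for $\be$. Axiom~\ref{dissim-def:sym} is immediate, since $\neg$ is applied pointwise and the involution is trivial, and axiom~\ref{dissim-def:str} follows from the strictness of $\al$ and the antitonicity of $\neg$. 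The real content lies in the regularity axiom~\ref{dissim-def:reg} and the contrapositive transitivity axiom~\ref{dissim-def:tran}.

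For regularity, writing $a:=\al(x,x)$ and $b:=\al(x,y)$ with $b\le a$ (strictness), axiom~\ref{dissim-def:reg} reduces in a frame to $\neg b=(\neg b\ra\neg a)\ra\neg a$. The inequality $\neg b\le(\neg b\ra\neg a)\ra\neg a$ is the general Heyting fact $s\le(s\ra t)\ra t$. For the reverse I would first observe $b\le\neg b\ra\neg a$, because $b\wedge\neg b=\bot\le\neg a$; then $((\neg b\ra\neg a)\ra\neg a)\wedge b\le((\neg b\ra\neg a)\ra\neg a)\wedge(\neg b\ra\neg a)\le\neg a$, and since this meet also lies below $b\le a$, it is bounded by $a\wedge\neg a=\bot$, whence $(\neg b\ra\neg a)\ra\neg a\le\neg b$.

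Finally, for contrapositive transitivity, set $p:=\al(x,y)$, $q:=\al(y,z)$, $r:=\al(x,z)$ and $m:=\al(y,y)$, so that $p,q\le m$ (strictness) and $p\wedge q\le r$ (transitivity). After residuation, axiom~\ref{dissim-def:tran} for $\be$ becomes the single statement $\neg r\wedge(\neg q\ra\neg m)\wedge p=\bot$. Transitivity gives $p\wedge\neg r\le\neg q$ (as $p\wedge q\wedge\neg r\le r\wedge\neg r=\bot$), so $\neg r\wedge(\neg q\ra\neg m)\wedge p\le\neg q\wedge(\neg q\ra\neg m)\le\neg m$; and since this meet is also below $p\le m$, it is bounded by $m\wedge\neg m=\bot$, as required. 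The main obstacle is precisely axioms~\ref{dissim-def:reg} and~\ref{dissim-def:tran}: both are nested pseudocomplement identities whose verification hinges on feeding in exactly the right instances of the strictness and transitivity of $\al$ and on the residuation law $w\le\neg a\iff w\wedge a=\bot$. Conceptually, regularity works because each $\neg\al(x,y)$ is a \emph{regular} element, a fixpoint of $\neg\neg$, and these form a Boolean algebra on which the Heyting arrow behaves classically; this is the structural feature of frames that a merely divisible quantale lacks, and it is what makes the converse direction available here but not in the previous subsection.
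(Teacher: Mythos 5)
Your proof is correct, but it takes a genuinely different route from the paper. The paper derives part \ref{frame-neg-sim-dissim:neg-sim} from a stronger, structural statement: Lemma \ref{frame-neg-DQ-BQ} shows that $(d:p\rqa q)\mapsto(\neg d:\neg p\lar\neg q)$ is a \emph{homomorphism} of quantaloids $\neg:\HsQ\to\KsQ$ (preserving composition on the nose, $\neg(e\diamond d)=\neg e\bullet\neg d$, which is the hard step and requires the frame law $\neg\neg(p\wedge q)=\neg\neg p\wedge\neg\neg q$); since this homomorphism preserves the (trivial) involution, it carries symmetric $\HsQ$-categories to symmetric $\KsQ$-categories, and the proposition then follows from the identifications in Theorems \ref{sim-DsQ-Cat} and \ref{dissim-BsQ-Cat}. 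You instead verify the four axioms of Definition \ref{dissim-def} pointwise for $\be=\neg\al$, using only Heyting-algebra inequalities and the residuation law $w\le\neg a\iff w\wedge a=\bot$; your reductions of \ref{dissim-def:reg} to $\neg b=(\neg b\ra\neg a)\ra\neg a$ (for $b\le a$) and of \ref{dissim-def:tran} to $\neg r\wedge(\neg q\ra\neg m)\wedge p=\bot$ are accurate, and the arguments given for each are sound. What each approach buys: yours is shorter and self-contained, never needing exact functoriality of $\neg$ nor the double-negation-distributes-over-meet identity, only one-sided estimates; the paper's pays that extra cost to obtain more, namely the quantaloid homomorphism itself and hence Corollary \ref{frame-neg-BQCat-DQCat}, which gives functors $\KsQ\text{-}\Cat\to\HsQ\text{-}\Cat$ and $\HsQ\text{-}\Cat\to\KsQ\text{-}\Cat$ on all (not necessarily symmetric) enriched categories. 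Your treatment of part \ref{frame-neg-sim-dissim:neg-dissim} as an instance of Proposition \ref{divisible-neg-dissim} coincides with the paper's.
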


Since each frame is a commutative and divisible quantale, Proposition \ref{divisible-neg-dissim} guarantees the validity of Proposition \ref{frame-neg-sim-dissim}\ref{frame-neg-sim-dissim:neg-dissim}. In fact, in this case Lemma \ref{neg-BQ-divisible} can be strengthened to the following:

\begin{lem} \label{frame-neg-BQ-DQ}
If $\sQ$ is a frame, then the assignment
\[(b:p\lar q)\mapsto(\neg b:\neg p \rqa\neg q)\]
defines a quantaloid homomorphism $\neg:\KsQ\to\HsQ$.
\end{lem}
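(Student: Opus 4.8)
The plan is to upgrade Lemma \ref{neg-BQ-divisible} from a lax functor to an honest homomorphism of quantaloids when $\sQ$ is a frame. Since a frame is commutative and divisible, we already have a lax functor $\neg:\KsQ\to\HsQ$ from Lemma \ref{neg-BQ-divisible} (the two negations $\neg_l,\neg_r$ coincide because $\bot$ is trivially cyclic in a commutative quantale). So the object assignment $p\mapsto\neg p$ and the arrow assignment $b\mapsto\neg b$ are already well-defined, monotone, and identity-preserving; what remains is to check the three \emph{equalities} that distinguish a homomorphism from a lax functor: that $\neg$ preserves suprema of arrows, that it preserves identities strictly (it does, trivially), and above all that it preserves composition strictly, i.e. $\neg c\diamond\neg b=\neg(c\bullet b)$ rather than merely $\leq$.

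First I would record the frame-specific simplifications. In a frame $\with=\wedge$, the residuation $\ldd=\rdd=\ra$ is Heyting implication, and the negation is $\neg q=(q\ra\bot)=\bv\{p\mid p\wedge q=\bot\}$, the pseudocomplement. The crucial algebraic facts I would exploit are the frame identities for pseudocomplements: negation turns finite meets into joins in the appropriate range, $\neg(a\wedge b)$ relates to $\neg a\vee\neg b$, and the infinite distributive law lets negation convert the arbitrary joins appearing in the implication formula \eqref{imp-def} into meets. Since the order on $\KsQ$ is \emph{reversed} relative to $\sQ$ while the order on $\HsQ$ is inherited directly, and negation is order-reversing on $\sQ$, the composite map $\neg$ is order-\emph{preserving} on the hom-lattices; checking that it preserves the reversed suprema of $\KsQ$ (which are infima in $\sQ$) amounts to the frame law that negation sends the relevant infima to suprema, which I would verify using that every back diagonal lies above $p\vee q$ together with regularity.

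The heart of the argument is strict preservation of composition. Recall from \eqref{back-diagonal-comp-def} that $c\bullet b=b\ra(c\ra q)$ and from \eqref{diagonal-comp-def} that $\neg c\diamond\neg b=(\neg c\ra\neg q)\wedge\neg b$. In the proof of Lemma \ref{neg-BQ-divisible} the computation \eqref{neg-c-diamond-neg-b} already gives $\neg c\diamond\neg b=\neg\neg(c\ra q)\wedge\neg b$, and the $\leq$ direction of the desired equality was established there. So the plan reduces to proving the reverse inequality $\neg(c\bullet b)\leq\neg\neg(c\ra q)\wedge\neg b=\neg c\diamond\neg b$. Here I would use the frame identity $\neg(b\ra(c\ra q))=\neg\neg(c\ra q)\wedge\neg b$, which holds because in a frame $\neg(x\ra y)=\neg\neg x\wedge\neg y$ for all $x,y$ (a standard Heyting-algebra fact, with $b$ playing the role of $x$ and $c\ra q$ the role of $y$). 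Substituting $x=b$, $y=c\ra q$ gives exactly $\neg(c\bullet b)=\neg\neg(c\ra q)\wedge\neg b$, upgrading the lax inequality to an equality in one stroke.

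I expect the main obstacle to be verifying that the target of $\neg$ really lands in $\HsQ$ with strict matching of the implication formulas, and in particular confirming the Heyting identity $\neg(x\ra y)=\neg\neg x\wedge\neg y$ in the precise form needed, since care is required about where pseudocomplements versus double-pseudocomplements appear. A secondary subtlety is the order-reversal bookkeeping: because $\KsQ$ carries the reversed order, I must be vigilant that ``suprema in $\KsQ$'' means ``infima in $\sQ$'' and confirm $\neg$ converts these correctly, for which the infinite frame distributivity (equivalently, that $\neg$ is a right adjoint $(-)\wedge q\dashv q\ra(-)$ evaluated at $\bot$, hence preserves the relevant meets as joins) is the clean tool. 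Once the single Heyting identity is in hand, the equality of composites is immediate and the remaining homomorphism axioms are routine, so I would present the Heyting identity as the one genuine lemma and let everything else follow formally from the frame structure.
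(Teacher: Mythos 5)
Your treatment of the heart of the lemma --- strict preservation of composition --- is essentially the paper's own argument in different packaging. Both proofs start from Lemma \ref{neg-BQ-divisible} and the computation \eqref{neg-c-diamond-neg-b}, and both finish with a Heyting-algebra manipulation: the paper proves the remaining inequality $\neg(c\bullet b)\leq\neg\neg(c\ra q)\wedge\neg b$ by rewriting the right-hand side as $\neg(\neg(c\ra q)\vee b)$ and using $\neg(c\ra q)\vee b\leq(c\ra q)\ra b$ together with antitonicity of $\neg$, while your identity $\neg(x\ra y)=\neg\neg x\wedge\neg y$ (valid in every frame) encapsulates exactly the same computation and even gives the equality in one stroke. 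However, as written your key step contains two compensating errors. From \eqref{back-diagonal-comp-def}, in a frame $c\bullet b=b\ldd(c\rdd q)=(c\ra q)\ra b$ (equivalently $(b\ra q)\ra c$), \emph{not} $b\ra(c\ra q)$ as you state; and with your announced roles $x=b$, $y=c\ra q$, your identity yields $\neg\neg b\wedge\neg(c\ra q)$, which is not the claimed $\neg\neg(c\ra q)\wedge\neg b$. The two slips cancel: taking the correct composite $(c\ra q)\ra b$ and the roles $x=c\ra q$, $y=b$ gives precisely $\neg(c\bullet b)=\neg\neg(c\ra q)\wedge\neg b=\neg c\diamond\neg b$. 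So your intended argument is sound, but the bookkeeping must be fixed before the derivation parses.

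A more serious caution concerns the step you call routine. Your justification of sup-preservation --- ``the frame law that negation sends the relevant infima to suprema'' --- is not a frame law: $(-)\ra\bot$ turns \emph{joins} into meets (it is one leg of an antitone Galois connection), and there is no dual law converting meets into joins; regularity of back diagonals does not rescue it. Concretely, in $\sQ=\CO(\bbR)$ the sets $b_n=(-1/n,1/n)$ are back diagonals from $\bot$ to $\bot$, their supremum in $\KsQ(\bot,\bot)$ (i.e., their $\sQ$-infimum) is $\Int\bigcap_n b_n=\varnothing$, and $\neg\varnothing=\bbR$, whereas $\bv_n\neg b_n=\bbR\setminus\{0\}$. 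So in this direction negation genuinely fails to preserve local suprema; note that the paper's own proof also dismisses this point as clear. What the composition argument actually delivers is a strict functor $\neg:\KsQ\to\HsQ$ that is monotone on hom-sets, and this is all that the downstream results (Proposition \ref{frame-neg-sim-dissim} and Corollary \ref{frame-neg-BQCat-DQCat}) use. Contrast this with the other direction, Lemma \ref{frame-neg-DQ-BQ}, where sup-preservation reduces to the genuine frame identity $\neg\bigl(\bv_i d_i\bigr)=\bw_i\neg d_i$ and is unproblematic. If you intend to claim the full homomorphism statement, this is the step where your proposal (and not only yours) has a real gap.
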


\begin{proof}
It is clear that the assignment
\[(b:p\lar q)\mapsto(\neg b:\neg p \rqa\neg q)\]
preserves identities and local suprema. With Lemma \ref{neg-BQ-divisible} in hand, it remains to show that
\[\neg(c\bullet b)\leq\neg c\diamond\neg b=\neg\neg(c\ra q)\wedge\neg b\]
by Equation \eqref{neg-c-diamond-neg-b}; that is,
\[\neg((c\ra q)\ra b)\leq\neg\neg(c\ra q)\wedge\neg b=\neg(\neg(c\ra q)\vee b).\]
This is easy since
\[\neg(c\ra q)\leq(c\ra q)\ra b\quad\text{and}\quad b\leq(c\ra q)\ra b\]
are both obvious.
\end{proof}

Moreover, Proposition \ref{frame-neg-sim-dissim}\ref{frame-neg-sim-dissim:neg-sim} is a direct consequence of Lemma \ref{frame-neg-DQ-BQ} below. Before proceeding to prove this lemma, we point out that the open set $\be(f,g)$ given by Example \ref{dissim-exmp-PCX} is precisely the negation of the open set $\al(f,g)$ given by Example \ref{sim-exmp-PCX} in the frame $\OX$, i.e.,
\[\be(f,g)=\neg\al(f,g)=\al(f,g)\ra\varnothing.\]
So, by applying Proposition \ref{frame-neg-sim-dissim} to the $\OX$-valued similarity $\al$, the $\OX$-valued dissimilarity $\be$ on $\PCX$ is soon obtained.

\begin{lem}\label{frame-neg-DQ-BQ}
If $\sQ$ is a frame, then the assignment
\[(d:p\rqa q)\mapsto(\neg d:\neg p \lar\neg q)\]
defines a quantaloid homomorphism $\neg:\HsQ\to\KsQ$.
\end{lem}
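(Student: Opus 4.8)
The plan is to verify directly that the order-reversing map $\neg=(-)\ra\bot$ carries diagonals below to back diagonals above and preserves the quantaloid structure, exploiting that $\sQ$ is commutative, that $\HsQ(p,q)=\{d\mid d\leq p\wedge q\}$ (a frame being divisible), and the Heyting identities $a\ra\neg b=\neg(a\wedge b)$, $\neg\neg\neg a=\neg a$, $\neg\neg(a\wedge b)=\neg\neg a\wedge\neg\neg b$ and $\neg\neg(a\ra b)=a\ra\neg\neg b$.

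\textbf{Well-definedness.} First I would fix $d:p\rqa q$, so $d\leq p\wedge q$, and check $\neg d\in\KsQ(\neg p,\neg q)$. The order requirement $\neg p\vee\neg q\leq\neg d$ is immediate since $d\leq p$ and $d\leq q$. For the back-diagonal identity \eqref{back-diagonal-def} it suffices, by symmetry, to treat the half involving $\neg p$; using $a\ldd b=b\ra a$, $a\rdd b=a\ra b$ and $a\ra\neg b=\neg(a\wedge b)$, this half unfolds to the claim $\neg(\neg(\neg d\wedge p)\wedge p)=\neg d$. Writing $c=\neg d\wedge p$, I would sandwich $\neg c\wedge p$ between $d$ and $\neg\neg d$: from $d\wedge c\leq d\wedge\neg d=\bot$ one gets $d\leq\neg c$, hence $d\leq\neg c\wedge p$; and from $\neg c\wedge p\wedge\neg d=\neg c\wedge c=\bot$ one gets $\neg c\wedge p\leq\neg\neg d$. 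Applying $\neg$ and using $\neg\neg\neg d=\neg d$ then yields $\neg(\neg c\wedge p)=\neg d$, as wanted.

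\textbf{Identities and local suprema.} The identity diagonal on $p$ is $p$, which $\neg$ sends to $\neg p$, the identity back diagonal on $\neg p$; so identities are preserved. Because the hom-orders of $\KsQ$ are those of $\sQ$ reversed, suprema in $\KsQ$ are infima in $\sQ$; and in a frame $\neg(\bv_{i}d_i)=\bw_{i}\neg d_i$, so $\neg$ sends local suprema in $\HsQ$ to local suprema in $\KsQ$.

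\textbf{Composition.} This is the step I expect to be the main obstacle, because the join-form de Morgan law $\neg(a\wedge b)=\neg a\vee\neg b$ fails in a general frame, so $\neg$ cannot be pushed through $\diamond$ termwise. Given $d:p\rqa q$ and $e:q\rqa r$, commutativity together with $d,e\leq q$ collapse \eqref{diagonal-comp-def} to $e\diamond d=(q\ra e)\wedge d=d\wedge e$, while \eqref{back-diagonal-comp-def} and $a\ra\neg b=\neg(a\wedge b)$ give $\neg e\bullet\neg d=\neg(\neg(\neg e\wedge q)\wedge d)$. To prove $\neg(e\diamond d)=\neg e\bullet\neg d$ I would show the two arguments have the same double negation and invoke $\neg=\neg\neg\neg$. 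Indeed $\neg\neg(d\wedge e)=\neg\neg d\wedge\neg\neg e$, whereas $\neg\neg(\neg(\neg e\wedge q)\wedge d)=\neg(\neg e\wedge q)\wedge\neg\neg d=(q\ra\neg\neg e)\wedge\neg\neg d$; and using $e\leq q$, $d\leq q$ (so $\neg\neg e\leq\neg\neg q$ and $\neg\neg d\leq\neg\neg q$) together with $q\ra\neg\neg e=\neg\neg q\ra\neg\neg e$, this last term equals $\neg\neg e\wedge\neg\neg d$. Hence $\neg(e\diamond d)=\neg e\bullet\neg d$, and combining the three parts shows $\neg:\HsQ\to\KsQ$ is a quantaloid homomorphism.
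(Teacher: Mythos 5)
Your proposal is correct: the sandwich $d\leq\neg(\neg d\wedge p)\wedge p\leq\neg\neg d$ does give $(\neg d\ra\neg p)\ra\neg p=\neg d$ after negating, and in the composition step both $\neg\neg(d\wedge e)$ and $\neg\neg\bigl(\neg(\neg e\wedge q)\wedge d\bigr)$ indeed reduce to $\neg\neg d\wedge\neg\neg e$. The paper's proof has the same skeleton (well-definedness, identities, local suprema, composition, with divisibility collapsing $e\diamond d$ to $d\wedge e$) but different tactics at the two nontrivial points. For well-definedness the paper argues purely equationally: from $d=p\wedge(p\ra d)$ it gets $\neg d=(p\ra d)\ra\neg p$, and the Heyting identity $((x\ra y)\ra y)\ra y=x\ra y$ then yields $\neg d=(\neg d\ra\neg p)\ra\neg p$ in one line. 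For composition the paper proves the two inequalities separately: $\neg(e\wedge d)=d\ra\neg e\geq(\neg d\ra\neg q)\ra\neg e$ follows from $d\leq\neg d\ra\neg q$ by antitonicity, while the reverse inequality is obtained by writing $\neg(e\wedge d)=\neg\neg(\neg e\vee\neg d)$, bounding $\neg e\vee\neg d\leq(\neg d\ra\neg q)\ra\neg e$, and using that this right-hand side is regular (being a negation). Your single device --- elements with equal double negations have equal negations --- replaces both of these and treats the two sides symmetrically, at the cost of the extra identity $q\ra\neg\neg e=\neg\neg q\ra\neg\neg e$ (the nucleus property of $\neg\neg$), which is correct but deserves its own justification. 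Both arguments ultimately rest on the same two frame facts, $\neg x=\neg\neg\neg x$ and $\neg\neg(a\wedge b)=\neg\neg a\wedge\neg\neg b$, so the difference is organizational rather than conceptual; yours is more systematic, the paper's slightly more economical.
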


\begin{proof}
First, if $d\in\HsQ(p,q)$, then $d=p\wedge(p\ra d)=q\wedge(q\ra d)$ by Equation \eqref{diagonal-def}. It follows that
\[\neg d=\neg(p\wedge(p\ra d))=(p\ra d)\ra\neg p=(((p\ra d)\ra\neg p)\ra\neg p)\ra\neg p=(\neg d\ra\neg p)\ra\neg p,\]
and similarly $\neg d=(\neg d\ra\neg q)\ra\neg q$. Hence $\neg d\in\BsQ(\neg p,\neg q)=\KsQ(\neg p,\neg q)$ by Equation \eqref{back-diagonal-def}.

Second, since $\neg:\HsQ\to\KsQ$ obviously preserves identities and local suprema, it remains to verify that
\[\neg e\bullet\neg d=\neg(e\diamond d)\]
for all $d:p\rqa q$ and $e:q\rqa r$.

Since frames are divisible, it follows that
\[\HsQ(p,q)=\DsQ(p,q)=\{d\in\sQ\mid d\leq p\wedge q\}\]
for all $p,q\in\sQ$, and the composite of $d:p\rqa q$ and $e:q\rqa r$ is given by
\[e\diamond d=e\wedge(q\ra d)= e\wedge q\wedge(q\ra d)=e\wedge d.\]
Thus we only need to show that
\[(d\leq p\wedge q\ \text{and}\ e\leq q\wedge r)\implies\neg(e\wedge d)=(\neg d\ra\neg q)\ra\neg e\]
because,  by definition, $\neg e\bullet\neg d=(\neg d\ra\neg q)\ra\neg e$. On one hand, $d\leq\neg d\ra\neg q$ implies that
\[\neg(e\wedge d)=d\ra\neg e\geq(\neg d\ra\neg q)\ra\neg e.\]
On the other hand,
\[\neg e\wedge(\neg d\ra\neg q)\leq\neg e\quad\text{and}\quad\neg d\wedge(\neg d\ra\neg q)\leq\neg q\leq\neg e\]
implies that
\begin{equation} \label{neg-e-neg-d-leq}
\neg e\vee\neg d\leq(\neg d\ra\neg q)\ra\neg e,
\end{equation}
and consequently
\begin{align*}
\neg(e\wedge d)&=\neg\neg\neg(e\wedge d)\\
&=\neg(\neg\neg e\wedge\neg\neg d) \\
&=\neg\neg(\neg e\vee\neg d)\\
&\leq\neg\neg((\neg d\ra\neg q)\ra\neg e)&(\text{Inequality \eqref{neg-e-neg-d-leq}})\\
&=\neg\neg\neg(e\wedge(\neg d\ra\neg q))\\
&=\neg(e\wedge(\neg d\ra\neg q))\\
&=(\neg d\ra\neg q)\ra\neg e,
\end{align*}
where the second equality holds since
\[\neg\neg(p\wedge q)=\neg\neg p\wedge\neg\neg q\]
for all elements $p,q$ in a frame (see, e.g., \cite[Exercise I.1.11(ii)]{Johnstone1986}).
\end{proof}

\begin{exmp} \label{C3-neg}
The assumptions on $\sQ$ in Propositions \ref{divisible-neg-dissim} and \ref{frame-neg-sim-dissim} are not indispensable. Note that the commutative quantale $C_3$ (see Example \ref{quantale-exmp}\ref{quantale-exmp:C3}) is not integral, and thus not divisible, and it holds that
\begin{align*}
&\neg\bot=\top,\quad\neg k=\neg\top=\bot;\\
&\HC(\bot,q)=\HC(q,\bot)=\HC(k,\top)=\HC(\top,k)=\{\bot\}\quad(q\in C_3),\\
&\HC(\top,\top)=\{\bot,\top\},\quad\HC(k,k)=\{\bot,k\};\\
&\KC(\top,q)=\KC(q,\top)=\KC(k,\bot)=\KC(\bot,k)=\{\top\}\quad (q\in C_3),\\
&\KC(\bot,\bot)=\{\bot,\top\},\quad\KC(k,k)=\{k,\top\}.
\end{align*}
With a direct computation we deduce that $\neg$ yields homomorphisms of quantaloids
\[\neg:\HC\to\KC\quad\text{and}\quad\neg:\KC\to\HC.\]
Therefore, the negation $\neg\al$ of each $C_3$-valued similarity $\al$ is also a $C_3$-valued dissimilarity, and vice versa.
\end{exmp}

 \begin{cor} \label{frame-neg-BQCat-DQCat}
If $\sQ$ is a frame, then the assignments
\[(X,\be)\mapsto(X,\neg\be)\quad\text{and}\quad(X,\al)\mapsto(X,\neg\al)\]
define functors $\KsQ\text{-}\Cat\to\HsQ\text{-}\Cat$ and $\HsQ\text{-}\Cat\to\KsQ\text{-}\Cat$, respectively.
\end{cor}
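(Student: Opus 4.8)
The plan is to obtain the corollary directly from the two quantaloid homomorphisms already established in Lemmas \ref{frame-neg-BQ-DQ} and \ref{frame-neg-DQ-BQ}, combined with the general principle recorded at the opening of Section \ref{Sim-vs-Dissim}: every lax functor $F:\CQ\to\CR$ of quantaloids induces a functor $\QCat\to\RCat$ that sends a $\CQ$-category $(X,|\text{-}|,\al)$ to the $\CR$-category $(X,F|\text{-}|,F\al)$ and acts as the identity on underlying maps of $\CQ$-functors. Since a homomorphism of quantaloids is in particular a lax functor, each of $\neg:\KsQ\to\HsQ$ and $\neg:\HsQ\to\KsQ$ yields such an induced functor, and all that remains is to recognise these induced functors as the assignments displayed in the statement.

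First I would apply the induction to $\neg:\KsQ\to\HsQ$ from Lemma \ref{frame-neg-BQ-DQ}: a $\KsQ$-category $(X,|\text{-}|,\be)$ is carried to the $\HsQ$-category $(X,\neg|\text{-}|,\neg\be)$, and a $\KsQ$-functor $f$ is carried to the same underlying map regarded as an $\HsQ$-functor. Well-definedness on objects and morphisms, as well as preservation of identities and composites, is inherited verbatim from the induced functor, so nothing has to be re-proved. The one point meriting an explicit remark is that the object assignment is faithfully captured by the shorthand $(X,\be)\mapsto(X,\neg\be)$, with the type map suppressed: by Equation \eqref{bexx=x} a $\KsQ$-category satisfies $|x|=\be(x,x)$, whence the type of the image is $\neg|x|=\neg\be(x,x)=(\neg\be)(x,x)$, which is precisely the diagonal that the $\HsQ$-category determined by $\neg\be$ must carry. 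Thus source and target are each determined by their hom-data alone, and writing $(X,\be)$ and $(X,\neg\be)$ is unambiguous.

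The second functor is obtained symmetrically: applying the induction to $\neg:\HsQ\to\KsQ$ from Lemma \ref{frame-neg-DQ-BQ} and using the dual identity $\al(x,x)=|x|$ for $\HsQ$-categories gives the functor $\HsQ\text{-}\Cat\to\KsQ\text{-}\Cat$, $(X,\al)\mapsto(X,\neg\al)$. There is essentially no hard step here, since the substantive work—that $\neg$ respects objects, local suprema, identities, and composition in the respective quantaloids—has already been discharged in the two lemmas; the corollary is the routine passage from homomorphisms of the enriching quantaloids to functors between the associated categories of enriched categories. Accordingly, the only item I would take care to state, rather than leave implicit, is the type-map bookkeeping above, which confirms that the induced functors are genuinely described by $\be\mapsto\neg\be$ and $\al\mapsto\neg\al$ without reference to additional data.
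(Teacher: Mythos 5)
Your proof is correct and follows exactly the route the paper intends: the corollary is stated without proof precisely because it is the immediate consequence of Lemmas \ref{frame-neg-BQ-DQ} and \ref{frame-neg-DQ-BQ} together with the general fact, recorded at the start of Section \ref{Sim-vs-Dissim}, that any lax functor (in particular any homomorphism) of quantaloids induces a functor between the corresponding categories of enriched categories. Your explicit check that the suppressed type maps are recovered from the diagonals via $|x|=\be(x,x)$ and $|x|=\al(x,x)$ is a sensible bookkeeping remark that the paper leaves implicit.
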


\subsection{When $\sQ$ is a Girard quantale}

Let $m\in\sQ$. We say that
\begin{itemize}
\item $m$ is \emph{cyclic}, if $m\ldd q=q\rdd m$ for all $q\in\sQ$;
\item $m$ is \emph{dualizing}, if $(m\ldd q)\rdd m=q=m\ldd(q\rdd m)$ for all $q\in\sQ$.
\end{itemize}
It is easy to observe the following facts:
\begin{itemize}
\item If $\sQ$ is commutative, then every element of $\sQ$ is cyclic.
\item If $\sQ$ is integral, then a dualizing element of $\sQ$, whenever it exists, has to be the bottom element $\bot$ of $\sQ$.
\end{itemize}
$\sQ$ is said to be a \emph{Girard quantale} \cite{Rosenthal1990,Yetter1990} if it has a cyclic dualizing element.

\begin{exmp} \label{Girard-quantale-exmp}
For the quantales listed in Example \ref{quantale-exmp}:
\begin{enumerate}[label={\rm(\arabic*)}]
\item \label{Girard-quantale-exmp:Lawvere} Lawvere's quantale $[0,\infty]$ is not Girard.
\item \label{Girard-quantale-exmp:frame} A frame is Girard if, and only if, it is a complete Boolean algebra.
\item \label{Girard-quantale-exmp:BL} A complete BL-algebra is Girard if, and only if, it is a complete MV-algebra. In particular, the unit interval $[0,1]$ equipped with a continuous t-norm becomes a Girard quantale if, and only if, it is isomorphic to $[0,1]$ equipped with the {\L}ukasiewicz t-norm.
\item \label{Girard-quantale-exmp:nil-min} The unit interval $[0,1]$ equipped with the nilpotent minimum t-norm is Girard, in which the bottom $0$ is the only cyclic dualizing element.
\item \label{Girard-quantale-exmp:C3} $C_3$ is Girard, in which the unit $k$ is the only cyclic dualizing element (see \cite[Exercise 2.6.1]{Eklund2018}).
\item \label{Girard-quantale-exmp:Rel} The involutive quantale $\Rel(X)$ is Girard, with a cyclic dualizing element given by $X\times X-\id_X$.
\item \label{Girard-quantale-exmp:Sup} The involutive quantale $\Sup[0,1]$ is Girard, with a cyclic dualizing element given by its unit $1_{[0,1]}$, i.e., the identity map on $[0,1]$  (see \cite[Example 2.6.17(a)]{Eklund2018}).
\end{enumerate}
\end{exmp}

In a Girard quantale $\sQ$ with a cyclic dualizing element $m$, following the notation of \cite{Rosenthal1990}, we define the \emph{linear negation} of $q\in\sQ$ as
\begin{equation} \label{linear-negation-def}
q^{\perp}:=m\ldd q=q\rdd m,
\end{equation}
which clearly satisfies
\begin{equation} \label{q-bot-bot=q}
q^{\perp\perp}=q.
\end{equation}
Hence, a Girard quantale may be considered as a table of truth-values in which the law of double negation is satisfied.

\begin{rem} \label{linear-negation-vs-negation}
If a Girard quantale $\sQ$ is integral, then the linear negation coincides with the negation, i.e.,
\begin{equation} \label{integral-bot=neg}
q^{\perp}=\neg q
\end{equation}
for all $q\in\sQ$. However, Equation \eqref{integral-bot=neg} may fail in a Girard quantale whose bottom $\bot$ fails to be a cyclic dualizing element, e.g., the Girard quantales $C_3$, $\Rel(X)$ and $\Sup[0,1]$ listed in Example \ref{Girard-quantale-exmp}.
\end{rem}

\begin{lem} \label{neg-BQ-Girard}
If $\sQ$ is a Girard quantale, then the assignment
\[(b:p\lar q)\mapsto(b^{\perp}:p^{\perp}\rqa q^{\perp})\]
defines a homomorphism of quantaloids $(-)^{\perp}:\KsQ\to\HsQ$.
\end{lem}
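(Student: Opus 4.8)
The plan is to reduce everything to a short ``dictionary'' recording how the linear negation $(-)^{\perp}$ interchanges the three operations $\with$, $\ldd$ and $\rdd$, and then to read off well-definedness, functoriality and supremum-preservation by direct substitution. First I would establish the conversion formulas. From the residuation adjunctions $(-\with y)\dv(-\ldd y)$, $(x\with -)\dv(x\rdd -)$ together with $q^{\perp}=m\ldd q=q\rdd m$ and $q^{\perp\perp}=q$ (Equation \eqref{q-bot-bot=q}), a one-line computation gives
\begin{equation*}
(x\with y)^{\perp}=y^{\perp}\ldd x=y\rdd x^{\perp};
\end{equation*}
substituting $y=a^{\perp}$, respectively $x=a^{\perp}$, and applying the involution then yields the dual identities
\begin{equation*}
(a\ldd b)^{\perp}=b\with a^{\perp}\quad\text{and}\quad(a\rdd b)^{\perp}=b^{\perp}\with a.
\end{equation*}
These three identities are the only computational input needed below; in particular they encode $k^{\perp}=m$ and $m^{\perp}=k$.

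Next I would check that $b^{\perp}$ is a genuine $\HsQ$-morphism $p^{\perp}\rqa q^{\perp}$. The size condition $b^{\perp}\leq p^{\perp}\wedge q^{\perp}$ is immediate from $p\vee q\leq b$, since $(-)^{\perp}$ reverses order. For the diagonal equation \eqref{diagonal-def}, I would apply $(-)^{\perp}$ to the back-diagonal equation \eqref{back-diagonal-def}: using $(a\ldd b)^{\perp}=b\with a^{\perp}$ and $(a\rdd b)^{\perp}=b^{\perp}\with a$, the identity $p\ldd(b\rdd p)=b$ transforms into $(b^{\perp}\ldd p^{\perp})\with p^{\perp}=b^{\perp}$ and $(q\ldd b)\rdd q=b$ transforms into $q^{\perp}\with(q^{\perp}\rdd b^{\perp})=b^{\perp}$, which are exactly the two halves of \eqref{diagonal-def} for $b^{\perp}$ at the objects $p^{\perp},q^{\perp}$. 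Hence the assignment lands in $\HsQ$.

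Finally I would dispatch the structural axioms. Preservation of identities is trivial, since the identity on $q$ is $q$ itself in both quantaloids and $(q)^{\perp}=q^{\perp}$. For suprema, the $\KsQ$-order is the reversed $\sQ$-order, so a supremum in $\KsQ$ is the $\sQ$-infimum $\bw_{i}b_{i}$ (Lemma \ref{back-diagonal-property}); being an order-reversing involution, $(-)^{\perp}$ sends this to $\bv_{i}b_{i}^{\perp}$, which is the $\HsQ$-supremum (Lemma \ref{diagonal-property}). For composition, I would apply $(-)^{\perp}$ to $c\bullet b=b\ldd(c\rdd q)$ (Equation \eqref{back-diagonal-comp-def}) via the dictionary, obtaining $(c\bullet b)^{\perp}=(c\rdd q)\with b^{\perp}$; expanding $c^{\perp}\diamond b^{\perp}=(c^{\perp}\ldd q^{\perp})\with b^{\perp}$ (Equation \eqref{diagonal-comp-def}) and rewriting $c^{\perp}\ldd q^{\perp}=(q^{\perp}\with c)^{\perp}=c\rdd q$ produces the same element. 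Thus composition is strictly preserved and $(-)^{\perp}$ is a homomorphism of quantaloids.

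The main obstacle is the non-commutative bookkeeping underlying the dictionary. Because $\ldd$ and $\rdd$ are genuinely distinct, one must track the side on which each factor sits, and the two halves of the diagonal condition, as well as the two presentations $e\diamond d=(e\ldd q)\with d=e\with(q\rdd d)$ of the composite, must each be matched with the correct one of $(a\ldd b)^{\perp}$ and $(a\rdd b)^{\perp}$. Once the three formulas are in place, every remaining step is a single substitution.
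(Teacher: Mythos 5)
Your proposal is correct and follows essentially the same route as the paper's proof: both rest on the Girard-quantale interchange identities for $(-)^{\perp}$ (in the paper's notation, $b\rdd p=b^{\perp}\ldd p^{\perp}$ and $(x\with y)^{\perp}=y^{\perp}\ldd x$ together with $q^{\perp\perp}=q$), applied to convert the back-diagonal equations into the diagonal equations and $\bullet$ into $\diamond$. The only difference is organizational — you isolate the three conversion formulas as a ``dictionary'' up front, whereas the paper performs the same manipulations inline — so nothing substantive separates the two arguments.
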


\begin{proof}
First, $b^{\perp}\in\HsQ(p^{\perp},q^{\perp})$ if $b\in\KsQ(p,q)$. Since $\sQ$ is Girard,
\begin{align*}
b^{\perp}&=(p\ldd(b\rdd p))^{\perp} & (\text{Equation \eqref{back-diagonal-def}})\\
&=(p^{\perp\perp}\ldd(b^{\perp}\ldd p^{\perp}))^{\perp} & (b\rdd p=b^{\perp}\ldd p^{\perp})\\
&=((b^{\perp}\ldd p^{\perp})\with p^{\perp})^{\perp\perp}\\
&=(b^{\perp}\ldd p^{\perp})\with p^{\perp},
\end{align*}
and similarly $b^{\perp}=q^{\perp}\with(q^{\perp}\rdd b^{\perp})$. Hence, $b^{\perp}:p^{\perp}\rqa q^{\perp}$ is a morphism in $\HsQ$ as $b^{\perp}\leq p^{\perp}\wedge q^{\perp}$ is obvious.

Second, since $(-)^{\perp}$ preserves identities and local suprema, it remains to show that
\[c^{\perp}\diamond b^{\perp}=(c\bullet b)^{\perp}\]
for all $b:p\lar q$ and $c:q\lar r$. Indeed,
\begin{align*}
c^{\perp}\diamond b^{\perp}&=(c^{\perp}\ldd q^{\perp})\with b^{\perp} & (\text{Equation \eqref{diagonal-comp-def}})\\
&=(c\rdd q)\with b^{\perp} & (c\rdd q=c^{\perp}\ldd q^{\perp})\\
&=((c\rdd q)\with b^{\perp})^{\perp\perp}\\
&=(b^{\perp\perp}\ldd(c\rdd q))^{\perp}\\
&=(b\ldd(c\rdd q))^{\perp}\\
&=(c\bullet b)^{\perp}, & (\text{Equation \eqref{back-diagonal-comp-def}})
\end{align*}
which completes the proof.
\end{proof}

\begin{lem} \label{neg-DQ-Girard}
If $\sQ$ is a Girard quantale, then the assignment
\[(d:p\rqa q)\mapsto(d^{\perp}:p^{\perp}\lar q^{\perp})\]
defines a homomorphism of quantaloids  $(-)^{\perp}:\HsQ\to\KsQ$.
\end{lem}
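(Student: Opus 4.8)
The plan is to mirror the proof of Lemma \ref{neg-BQ-Girard}, exploiting that the linear negation $(-)^{\perp}$ of \eqref{linear-negation-def} is an order-reversing involution on $\sQ$ with $q^{\perp\perp}=q$ (Equation \eqref{q-bot-bot=q}). Before touching the diagonal machinery I would record the three negation identities that do all the work: for all $a,b\in\sQ$,
\[(a\with b)^{\perp}=b^{\perp}\ldd a=b\rdd a^{\perp},\qquad a^{\perp}\ldd b^{\perp}=a\rdd b,\qquad a^{\perp}\rdd b^{\perp}=a\ldd b.\]
Each follows from a one-line adjunction argument together with $q^{\perp}=m\ldd q=q\rdd m$ and $q^{\perp\perp}=q$; for instance $x\leq a^{\perp}\rdd b^{\perp}\iff(a^{\perp}\with x)\with b\leq m\iff x\with b\leq a^{\perp}\rdd m=a\iff x\leq a\ldd b$.

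Second I would check that the assignment is well defined on hom-sets, i.e. $d\in\HsQ(p,q)$ forces $d^{\perp}\in\KsQ(p^{\perp},q^{\perp})$. Since $d\leq p\wedge q$ and $(-)^{\perp}$ is antitone, $p^{\perp}\vee q^{\perp}\leq d^{\perp}$ is immediate. For the back-diagonal equations \eqref{back-diagonal-def} I would negate the two diagonal equations \eqref{diagonal-def} defining $d$: from $d=(d\ldd p)\with p$ the first identity gives $d^{\perp}=p^{\perp}\ldd(d\ldd p)$, while the third gives $d^{\perp}\rdd p^{\perp}=d\ldd p$, so that $d^{\perp}=p^{\perp}\ldd(d^{\perp}\rdd p^{\perp})$; negating $d=q\with(q\rdd d)$ symmetrically yields $d^{\perp}=(q^{\perp}\ldd d^{\perp})\rdd q^{\perp}$. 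Hence $d^{\perp}$ is a back diagonal from $p^{\perp}$ to $q^{\perp}$ lying above $p^{\perp}\vee q^{\perp}$.

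Finally I would verify the structural axioms of a quantaloid homomorphism. Preservation of identities is immediate, and local suprema are preserved because $(-)^{\perp}$ sends a supremum in $\sQ$ to the infimum of the negations, which is exactly a supremum for the \emph{reversed} order carried by the hom-sets of $\KsQ$. For composition I would compute, for $d:p\rqa q$ and $e:q\rqa r$,
\[(e\diamond d)^{\perp}=\big((e\ldd q)\with d\big)^{\perp}=d^{\perp}\ldd(e\ldd q)=d^{\perp}\ldd(e^{\perp}\rdd q^{\perp})=e^{\perp}\bullet d^{\perp},\]
using \eqref{diagonal-comp-def}, the first negation identity, the third identity $e\ldd q=e^{\perp}\rdd q^{\perp}$, and \eqref{back-diagonal-comp-def}. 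The only real obstacle is bookkeeping: $\KsQ$ carries the reversed order and its composition $\bullet$ is built from implications rather than $\with$, so the risk is mismatching a left and right implication or dropping a $\perp$; once the three identities are in hand the computation is short. Alternatively, since Lemma \ref{neg-BQ-Girard} already shows $(-)^{\perp}\colon\KsQ\to\HsQ$ is a homomorphism and $q^{\perp\perp}=q$, the well-definedness established above exhibits $(-)^{\perp}\colon\HsQ\to\KsQ$ as a two-sided inverse of it, whence it is automatically a homomorphism of quantaloids.
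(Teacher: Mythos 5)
Your proof is correct and takes essentially the same approach as the paper's: the well-definedness step negates the two diagonal equations of \eqref{diagonal-def} using the same Girard residuation identities (the paper writes $d^{\perp}=p^{\perp}\ldd(d\ldd p)=p^{\perp}\ldd(d^{\perp}\rdd p^{\perp})$, which is your substitution $d\ldd p=d^{\perp}\rdd p^{\perp}$), and your composition computation $(e\diamond d)^{\perp}=d^{\perp}\ldd(e\ldd q)=d^{\perp}\ldd(e^{\perp}\rdd q^{\perp})=e^{\perp}\bullet d^{\perp}$ is exactly the paper's chain of equalities read in the reverse direction. Your closing alternative---deducing the homomorphism property from Lemma \ref{neg-BQ-Girard} and $q^{\perp\perp}=q$ by the two-sided-inverse argument---is also sound, and is in effect how the paper afterwards assembles Theorem \ref{Girard-iso}, though the paper proves the present lemma directly.
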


\begin{proof}
First, if $d\in\HsQ(p,q)$, then $d\leq p\wedge q$ and $d=(d\ldd p)\with p=q\with(q\rdd d)$ by Equation \eqref{diagonal-def}. It follows that $d^{\perp}\geq p^{\perp}\vee q^{\perp}$ and
\[d^{\perp}=((d\ldd p)\with p)^{\perp}=p^{\perp}\ldd(d\ldd p)=p^{\perp}\ldd((p^{\perp}\ldd(d\ldd p))\rdd p^{\perp})=p^{\perp}\ldd(d^{\perp}\rdd p^{\perp}),\]
and similarly $d^{\perp}=(q^{\perp}\ldd d^{\perp})\rdd q^{\perp}$. Hence $d^{\perp}\in\KsQ(p^{\perp},q^{\perp})$ by Equation \eqref{back-diagonal-def}.

Second, since $(-)^{\perp}:\HsQ\to\KsQ$ obviously preserves identities and local suprema, it remains to check that
\[e^{\perp}\bullet d^{\perp}=(e\diamond d)^{\perp}\]
for all $d:p\rqa q$ and $e:q\rqa r$. Indeed,
\begin{align*}
e^{\perp}\bullet d^{\perp}&=d^{\perp}\ldd(e^{\perp}\rdd q^{\perp})&(\text{Equation \eqref{back-diagonal-comp-def}})\\
&=d^{\perp}\ldd((q\with e^{\perp})^{\perp})\\
&=d^{\perp}\ldd(e^{\perp\perp}\ldd q)\\
&=d^{\perp}\ldd(e\ldd q)\\
&=((e\ldd q)\with d)^{\perp}\\
&=(e\diamond d)^{\perp}, &(\text{Equation \eqref{diagonal-comp-def}})
\end{align*}
which completes the proof.
\end{proof}

The homomorphisms of quantaloids given in Lemmas \ref{neg-BQ-Girard} and \ref{neg-DQ-Girard} are obviously inverse to each other by Equation \eqref{q-bot-bot=q}, and thus they are both isomorphisms between the quantaloids $\HsQ$ and $\KsQ$. Moreover, it is clear that both of them can be extended to isomorphisms of quantaloids between $\DsQ$ and $\BsQ$, and therefore:

\begin{thm} \label{Girard-iso}
If $\sQ$ is a Girard quantale, then there are isomorphisms
\[\DsQ\cong\BsQ\quad\text{and}\quad\HsQ\cong\KsQ\]
of quantaloids, and consequently,  the assignment $(X,\al)\mapsto(X,\al^{\perp})$ defines an isomorphism
\[\HsQ\text{-}\Cat\cong\KsQ\text{-}\Cat\]
of categories. 
\end{thm}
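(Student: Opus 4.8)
The plan is to assemble the theorem directly from the two homomorphisms of quantaloids already established, namely $(-)^{\perp}\colon\KsQ\to\HsQ$ from Lemma \ref{neg-BQ-Girard} and $(-)^{\perp}\colon\HsQ\to\KsQ$ from Lemma \ref{neg-DQ-Girard}, and then to transport the resulting quantaloid isomorphism to enriched categories through functoriality of the assignment $\CQ\mapsto\QCat$.

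First I would verify that these two homomorphisms are mutually inverse. On objects this is immediate from Equation \eqref{q-bot-bot=q}, since $p^{\perp\perp}=p$; on hom-sets it is equally immediate, because a morphism $d\colon p\rqa q$ is sent to $d^{\perp}\colon p^{\perp}\lar q^{\perp}$ and thence to $d^{\perp\perp}\colon p^{\perp\perp}\rqa q^{\perp\perp}$, which is again $d\colon p\rqa q$ by \eqref{q-bot-bot=q}, and symmetrically for back diagonals. As each composite is an identity homomorphism, both maps are isomorphisms, whence $\HsQ\cong\KsQ$.

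Next I would observe that the very same assignment already yields the wider isomorphism $\DsQ\cong\BsQ$. The key point is that the computations in the proofs of Lemmas \ref{neg-BQ-Girard} and \ref{neg-DQ-Girard}, showing that $b^{\perp}$ satisfies the diagonal equation \eqref{diagonal-def} (resp.\ that $d^{\perp}$ satisfies the back-diagonal equation \eqref{back-diagonal-def}) and that $(-)^{\perp}$ carries $\bullet$ to $\diamond$ and vice versa, use only the defining equations of back diagonals and diagonals together with \eqref{q-bot-bot=q}; the inequality constraints $p\vee q\leq b$ and $d\leq p\wedge q$ that cut $\KsQ$ out of $\BsQ$ and $\HsQ$ out of $\DsQ$ are invoked \emph{solely} to place $b^{\perp}$ (resp.\ $d^{\perp}$) inside the appropriate subquantaloid. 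Dropping those constraints, the identical formulas define mutually inverse homomorphisms between all diagonals and all back diagonals, so $\DsQ\cong\BsQ$, with $\HsQ\cong\KsQ$ recovered as the restriction to the respective subquantaloids.

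Finally I would deduce the category isomorphism. As recalled before Lemma \ref{neg-BQ-divisible}, every homomorphism of quantaloids induces a functor on enriched categories; applying this to the isomorphism $(-)^{\perp}\colon\HsQ\to\KsQ$ and to its inverse produces functors $\HsQ\text{-}\Cat\to\KsQ\text{-}\Cat$ and back, sending $(X,|\text{-}|,\al)$ to $(X,|\text{-}|^{\perp},\al^{\perp})$ and conversely. These are mutually inverse precisely because the underlying quantaloid homomorphisms are, so $(X,\al)\mapsto(X,\al^{\perp})$ is the asserted isomorphism $\HsQ\text{-}\Cat\cong\KsQ\text{-}\Cat$. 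I do not anticipate a genuine obstacle, since the two lemmas already carry all the analytic content; the only place demanding care is the middle step, where one must confirm that no line of those proofs secretly relied on the subquantaloid inequalities, so that the enlargement to $\DsQ\cong\BsQ$ is legitimate.
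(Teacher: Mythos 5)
Your proposal is correct and follows essentially the same route as the paper: the paper likewise observes that the homomorphisms of Lemmas \ref{neg-BQ-Girard} and \ref{neg-DQ-Girard} are mutually inverse by Equation \eqref{q-bot-bot=q}, extends them to an isomorphism $\DsQ\cong\BsQ$ (your careful check that the subquantaloid inequalities enter only to place $b^{\perp}$, resp.\ $d^{\perp}$, in the subquantaloid is exactly the content behind the paper's ``it is clear that both of them can be extended''), and obtains the category isomorphism from the induced functors on enriched categories.
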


Note that for each of the involutive Girard quantales listed in Example \ref{Girard-quantale-exmp}, the cyclic dualizing element given there is hermitian. Actually, whenever $\sQ$ is an involutive Girard quantale with a hermitian and cyclic dualizing element, it is easy to verify that the homomorphisms of quantaloids given in Lemmas \ref{neg-BQ-Girard} and \ref{neg-DQ-Girard} both preserve the involution of $\sQ$, and in this case:

\begin{thm} \label{Girard-sim-dissim} \footnote{The authors are grateful to an anonymous referee for helpful remarks on this theorem.}
If $\sQ$ is an involutive Girard quantale with a hermitian and cyclic dualizing element, then $\sQ$-valued similarities and $\sQ$-valued dissimilarities are interdefinable by the aid of linear negation; that is, the linear negation $\al^{\perp}$ of each $\sQ$-valued similarity $\al$ is a $\sQ$-valued dissimilarity, and conversely, the linear negation $\be^{\perp}$ of each $\sQ$-valued dissimilarity $\be$ is a $\sQ$-valued similarity.
\end{thm}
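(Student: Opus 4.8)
The plan is to deduce the theorem directly from the isomorphism $\HsQ\cong\KsQ$ of quantaloids already furnished by Theorem \ref{Girard-iso}, checking only that it interacts correctly with symmetry. By Theorems \ref{sim-DsQ-Cat} and \ref{dissim-BsQ-Cat}, a $\sQ$-valued similarity is exactly a symmetric $\HsQ$-category and a $\sQ$-valued dissimilarity is exactly a symmetric $\KsQ$-category. Lemmas \ref{neg-BQ-Girard} and \ref{neg-DQ-Girard} already exhibit the linear negation as mutually inverse homomorphisms $(-)^{\perp}\colon\HsQ\to\KsQ$ and $(-)^{\perp}\colon\KsQ\to\HsQ$, and by the general discussion of lax functors these induce the assignment $(X,\al)\mapsto(X,\al^{\perp})$ on enriched categories. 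Hence the entire theorem reduces to showing that this assignment carries symmetric categories to symmetric categories, which, as noted in that discussion, amounts to verifying that linear negation preserves the involution of $\sQ$.

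Concretely, the one identity I would establish is
\[(q^{\perp})^{\circ}=(q^{\circ})^{\perp}\qquad(q\in\sQ).\]
Starting from $q^{\perp}=m\ldd q$ and applying Equation \eqref{invo-imp} yields $(q^{\perp})^{\circ}=(m\ldd q)^{\circ}=q^{\circ}\rdd m^{\circ}$; since the dualizing element $m$ is assumed hermitian, $m^{\circ}=m$, and the right-hand side is precisely $q^{\circ}\rdd m=(q^{\circ})^{\perp}$. Cyclicity of $m$ guarantees that the two presentations $m\ldd(-)$ and $(-)\rdd m$ of the linear negation agree, so no ambiguity arises, and the same identity governs both the object and the morphism parts of $(-)^{\perp}$ since these are all merely elements of $\sQ$.

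Granting this, symmetry transfers at once: from $\al(x,y)=\al(y,x)^{\circ}$ I obtain
\[\al^{\perp}(x,y)=(\al(y,x)^{\circ})^{\perp}=(\al(y,x)^{\perp})^{\circ}=\al^{\perp}(y,x)^{\circ},\]
so $(X,\al^{\perp})$ is a symmetric $\KsQ$-category, i.e.\ a $\sQ$-valued dissimilarity; the converse direction is identical, using the inverse homomorphism from Lemma \ref{neg-BQ-Girard} together with $q^{\perp\perp}=q$ from Equation \eqref{q-bot-bot=q}. I do not anticipate a genuine obstacle here: everything beyond Theorem \ref{Girard-iso} rests on the commutation of $(-)^{\perp}$ with $(-)^{\circ}$, and that in turn depends precisely on the hypothesis that the dualizing element be hermitian—without it, Equation \eqref{invo-imp} would leave a stray $m^{\circ}$ that could not be reabsorbed into a linear negation, and the correspondence would fail to respect symmetry.
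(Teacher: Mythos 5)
Your proposal is correct and follows essentially the same route as the paper: the paper also derives the theorem from the isomorphisms of Lemmas \ref{neg-BQ-Girard} and \ref{neg-DQ-Girard} (Theorem \ref{Girard-iso}) together with the observation---stated there as ``it is easy to verify''---that these homomorphisms preserve the involution when the cyclic dualizing element is hermitian, which then yields the symmetry transfer via the general discussion of involution-preserving lax functors. Your explicit computation $(q^{\perp})^{\circ}=(m\ldd q)^{\circ}=q^{\circ}\rdd m^{\circ}=q^{\circ}\rdd m=(q^{\circ})^{\perp}$ is precisely the verification the paper leaves implicit.
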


\begin{rem} \label{sim-vs-apart-Boolean}
If $\sB$ is a complete Boolean algebra, from Theorem \ref{Girard-iso} we know that $\sB$-valued similarities and $\sB$-valued dissimilarities are interdefinable by passing to complements. Note that
\[\al(p,q):=p\wedge q\]
for all $p,q\in\sB$ defines a $\sB$-valued similarity $\al$ on $\sB$ itself, but its negation cannot be made into a $\sB$-valued apartness relation on $\sB$ (see Remark \ref{dissim-vs-apart}), because
\[\neg\al(q,q)=\neg q\neq\bot\]
as long as $q\neq\top$. So, even in the Boolean-valued case, similarities and apartness relations are not interdefinable by passing to complements.  However, as pointed out to us by an anonymous referee, there is also a natural way to switch between similarities and apartness relations in the Boolean-valued case.

If $(X,E,\gamma)$ is a $\sB$-valued apartness relation, then
\[\al(x,y):=E(x)\wedge E(y)\wedge \neg\gamma(x,y)\]
defines a $\sB$-valued similarity on $X$.  Conversely, if $\al$ is a $\sB$-valued similarity on $X$, then $(X,E,\gamma)$ is a $\sB$-valued apartness relation, where $E(x):=\al(x,x)$ and \[\gamma(x,y):=E(x)\wedge E(y)\wedge \neg\al(x,y)\]
for all $x,y\in X$. We note en passe that the principal lower set $\da\!(E(x)\wedge E(y))$ of $\sB$ is itself a complete Boolean algebra and $\al(x,y)$ is the complement of $\gamma(x,y)$ in this Boolean algebra ($\gamma(x,y)$ belongs to the Boolean algebra because $\gamma(x,y)\leq E(x)\wedge E(y)$).
\end{rem}

\begin{exmp} \label{nil-min-neg}
As an immediate consequence of Theorem \ref{Girard-iso}, we may find another example for the non-necessity of the assumptions on $\sQ$ in Propositions \ref{divisible-neg-dissim} and \ref{frame-neg-sim-dissim}. Let $\sQ$ be the unit interval $[0,1]$ equipped with the nilpotent minimum t-norm (see Example \ref{quantale-exmp}\ref{quantale-exmp:nil-min}). Then $\sQ$ is not a divisible quantale, hence not a frame, but the negation operator
\[\neg:\HsQ\to\KsQ\]
is an isomorphism of quantaloids since $\sQ$ is a Girard quantale with the bottom $0$ being the cyclic dualizing element (see Example \ref{Girard-quantale-exmp}\ref{Girard-quantale-exmp:nil-min}).
\end{exmp}

\begin{rem}
Since the quantale $C_3$ is Girard (see Example \ref{Girard-quantale-exmp}\ref{Girard-quantale-exmp:C3}), by applying the linear negation \eqref{linear-negation-def} we are able to switch between $C_3$-valued similarities and $C_3$-valued dissimilarities. It is interesting that for this quantale, as Example \ref{C3-neg} shows, the negation \eqref{negation-commutative-def} also makes sense while considering the interactions between similarities and dissimilarities.
\end{rem}

In Theorem \ref{Girard-iso}, the interdefinability of $\sQ$-valued similarities and $\sQ$-valued dissimilarities follows from the isomorphism
\[\HsQ\cong\KsQ\]
when $\sQ$ is Girard. It is now natural to ask whether $\sQ$ being Girard is essential for establishing the isomorphism $\HsQ\cong\KsQ$. In what follows we are able to provide an affirmative answer for a commutative and integral quantale $\sQ$ (see Corollary \ref{DQ-BQ-Girard-integral}). Actually, we have the following:

\begin{thm} \label{DQ-BQ-Girard}
Let $\sQ$ be a commutative quantale. Then there is an isomorphism
\[\DsQ\cong\BsQ\]
of quantaloids if, and only if, $\sQ$ is a Girard quantale.
\end{thm}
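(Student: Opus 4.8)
The forward implication is already in hand: if $\sQ$ is a Girard quantale, then Theorem \ref{Girard-iso} provides the isomorphism $\DsQ\cong\BsQ$ (realized explicitly by linear negation), so only the converse requires work. Fix an isomorphism $F:\DsQ\to\BsQ$ of quantaloids; since $\sQ$ is commutative, every element is cyclic, so it suffices to produce a \emph{dualizing} element, and the natural candidate is $m:=F(k)$, the $F$-image of the unit object. As an orienting remark, $\bot$ is a zero object of $\DsQ$ and $\top$ is a zero object of $\BsQ$ (all hom-sets into or out of them are singletons, by Lemmas \ref{diagonal-property} and \ref{back-diagonal-property}), so necessarily $F(\bot)=\top$.

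The first concrete step is to compute the endo-hom quantales that $F$ must match. A direct calculation from \eqref{diagonal-def} and \eqref{diagonal-comp-def} shows that the diagonals from $k$ to $k$ are \emph{all} of $\sQ$, with $e\diamond d=e\with d$; that is, $\DsQ(k,k)=(\sQ,\with,k)$ recovers $\sQ$ itself. Dually, from \eqref{back-diagonal-def} and \eqref{back-diagonal-comp-def} the back diagonals from $m$ to $m$ are exactly the elements fixed by double negation, $b=(b\ra m)\ra m$, with composite $c\bullet b=(c\ra m)\ra b$; writing $q^{\ast}:=q\ra m$, the quantale $\BsQ(m,m)$ is thus the set $R:=\{q\in\sQ\mid q^{\ast\ast}=q\}$ of $m$-regular elements. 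Restricting $F$ yields a quantale isomorphism $(\sQ,\with,k)\cong(R,\bullet,m)$ which, because the hom-order of $\BsQ$ is the \emph{reversed} order of $\sQ$, is order-reversing on underlying lattices; transporting $\bullet$ through the order-reversing involution $(-)^{\ast}$ of $R$ identifies $(R,\bullet,m)$ with the quotient quantale $\sQ_m=(R,\,(a\with b)^{\ast\ast},\,k^{\ast\ast})$ of $\sQ$ by the nucleus $j:=(-)^{\ast\ast}$ (a nucleus since $\sQ$ is commutative). Hence $F$ furnishes a quantale isomorphism $\sQ\cong\sQ_m$, and the whole theorem reduces to showing $R=\sQ$, equivalently that $j$ is the identity, equivalently that $q\mapsto q\ra m$ is a bijection of $\sQ$ (its image is always $R$, so injectivity is exactly $R=\sQ$, and then $m$ is dualizing and $\sQ$ is Girard).

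The plan to finish is to pin down the action of $F$ on objects. Since $F$ sends the identity morphism $1_q=q$ to $1_{Fq}=Fq$, the value $Fq$ is literally $F_{qq}(q)$, and I would compute it by transporting along the canonical connecting morphisms $q\in\DsQ(k,q)$ and $q\in\DsQ(q,k)$ (which, by Lemma \ref{diagonal-comp}, compose back to $q$ in $\DsQ(k,k)$), using the representability of $\DsQ(k,-)$ (corresponding under $F$ to $\BsQ(m,-)$) together with the fact that $q$ is a cyclic generator of the $\sQ$-module $\DsQ(k,q)=q\with\sQ$. The anticipated outcome, matching the Girard isomorphism of Theorem \ref{Girard-iso}, is $Fq=q\ra m$ for every object $q$; as $F$ is a bijection on objects, this makes $q\mapsto q\ra m$ a bijection of $\sQ$, forcing $m$ to be dualizing. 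The main obstacle is precisely this global step: the single endo-hom at $k$ only gives an abstract isomorphism $\sQ\cong\sQ_m$ onto a nucleus-quotient of itself, which for infinite $\sQ$ does not by itself force the nucleus to be trivial (for finite $\sQ$ it does immediately, via the cardinality identity $|\sQ|=|R|$ with $R\subseteq\sQ$). Establishing triviality in general is where the full \emph{cross-object} functoriality of $F$, rather than one hom-quantale in isolation, must be exploited.
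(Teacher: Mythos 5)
Your treatment of the ``if'' direction and your opening computations are correct and in fact retrace the paper's own route: you identify $\sQ=\DsQ(k,k)$ with $e\diamond d=e\with d$, and you correctly compute that for $m:=F(k)$ the hom-quantale $\BsQ(m,m)$ is the quantale $(R,\bullet,m)$ of $m$-regular elements, isomorphic (via the order-reversing involution $(-)\ra m$) to the nucleus-quotient $\sQ_m$. But there is a genuine gap at the decisive step, and it comes from a wrong reduction: you insist that the dualizing element of $\sQ$ must be $m=F(k)$ itself, i.e.\ that the theorem ``reduces to showing $R=\sQ$.'' That reduction is unnecessary, you give no proof of it, and your proposed route to it cannot work. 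Being a Girard quantale is a property invariant under isomorphism of quantales, so it suffices that $\sQ$ be isomorphic to \emph{some} Girard quantale; the cyclic dualizing element of $\sQ$ is then the preimage under the hom-isomorphism of the dualizing element on the other side, and need not be $m$. This is exactly how the paper finishes: by Proposition \ref{BQq-Girard}, for any cyclic $q$ (and in a commutative quantale every element is cyclic) the quantale $\BsQ(q,q)$ is Girard, with cyclic dualizing element $m_q=q\ldd q$; hence $\sQ=\DsQ(k,k)\cong\BsQ(F(k),F(k))$ is Girard, and the proof ends there. You had this within reach, since you identified $\BsQ(m,m)$ with the double-negation quotient $\sQ_m$, and such quotients are always Girard (Rosenthal's theorem, recalled in the remark following Proposition \ref{BQq-Girard}; concretely, $m$ is dualizing \emph{inside} $\sQ_m$ because $(a\ra m)\ra m=a$ for every $a\in R$). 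What you set out to prove instead---that the nucleus $((-)\ra m)\ra m$ is the identity on $\sQ$---is simply not needed.

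Moreover, the strategy you sketch for closing the gap is doomed: the formula $Fq=q\ra m$ on objects cannot hold for an arbitrary isomorphism $F$. Indeed, any quantale automorphism $\sigma$ of $\sQ$ induces a quantaloid automorphism of $\BsQ$ (acting by $\sigma$ on objects and on hom-sets), and composing the Girard isomorphism of Theorem \ref{Girard-iso} with it yields an isomorphism $F$ with $Fq=\sigma(q\ra m)$; for example, take $\sQ=2^X$ a complete Boolean algebra and $\sigma$ induced by a nontrivial permutation of $X$, where $Fq=\sigma(\neg q)$ agrees with $q\ra m'$ for a fixed $m'$ only if $\sigma$ is the identity. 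So ``cross-object functoriality'' does not pin down $F$ on objects, and the only robust datum is the single hom-quantale isomorphism you already extracted---which, combined with Proposition \ref{BQq-Girard}, completes the proof.
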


As a preparation, let us investigate properties of the quantale
\[\BsQ(q,q)\]
for a given quantale $\sQ=(\sQ,\with,k)$ and a \emph{cyclic} element $q\in\sQ$. Since $\BsQ(q,q)$ is equipped with the \emph{reverse} order inherited from $\sQ$, in order to eliminate ambiguity we use they symbol ``$\preceq$'' for the order in $\BsQ(q,q)$; that is,
\[b\preceq b'\ \text{in}\ \BsQ(q,q)\iff b'\leq b\ \text{in}\ \sQ.\]
Moreover, we denote by $\lda$, $\rda$ the implications in $\BsQ$, and reserve $\ldd$, $\rdd$ for implications in $\sQ$.

\begin{lem} \label{BQq-cyclic}
If $q\in\sQ$ is a cyclic element, then
\[b'\lda b=q\ldd(b'\rdd b)\quad\text{and}\quad b\rda b'=(b\ldd b')\rdd q\]
for all $b,b'\in\BsQ(q,q)$.
\end{lem}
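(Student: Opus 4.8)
The plan is to verify the two proposed formulas for the implications $\lda$ and $\rda$ in the hom-quantale $\BsQ(q,q)$ directly from the adjunction defining those implications, exploiting the fact that the order on $\BsQ(q,q)$ is \emph{reversed} relative to $\sQ$. Recall from \eqref{back-diagonal-comp-def} that the composition in $\BsQ(q,q)$ is $c\bullet b=b\ldd(c\rdd q)=(q\ldd b)\rdd c$, with identity $q$. The implications are characterised by the universal property
\[
c\bullet b\preceq b'\iff c\preceq b'\lda b\quad\text{and}\quad c\bullet b\preceq b'\iff b\preceq c\rda b';
\]
unwinding the reversed order, $c\bullet b\preceq b'$ means $b'\leq c\bullet b$ in $\sQ$, and $c\preceq b'\lda b$ means $b'\lda b\leq c$ in $\sQ$.

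First I would establish the formula for $\lda$. I claim $b'\lda b=q\ldd(b'\rdd b)$. Using $c\bullet b=b\ldd(c\rdd q)$, the condition $b'\leq b\ldd(c\rdd q)$ is by \eqref{imp-def} equivalent to $b'\with b\leq c\rdd q$, hence (again by residuation) to $c\with(b'\with b)\leq q$, i.e. $(c\with b')\with b\leq q$, which is $c\with b'\leq q\ldd b$. Now I would bring in cyclicity of $q$: since $q$ is cyclic, $q\ldd b=b\rdd q$, so the condition reads $c\with b'\leq b\rdd q$, equivalently $b'\with c\leq\ldots$ — here I must be careful, as $\sQ$ is not assumed commutative. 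The cleaner route is to massage $c\with b'\leq q\ldd b$ into a bound on $c$ alone. Using cyclicity to rewrite $q\ldd b$ and residuating to isolate $c$, one reaches $c\geq q\ldd(b'\rdd b)$ in $\sQ$, i.e. $c\preceq q\ldd(b'\rdd b)$ in $\BsQ(q,q)$; matching against $c\preceq b'\lda b$ identifies $b'\lda b=q\ldd(b'\rdd b)$.

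The formula for $\rda$ I would obtain by the dual (left/right-symmetric) argument, starting instead from the presentation $c\bullet b=(q\ldd b)\rdd c$ and isolating $b$. The computation $b'\leq(q\ldd b)\rdd c\iff(q\ldd b)\with b'\leq c$, combined with cyclicity to convert $q\ldd b$ into $b\rdd q$ and residuation to peel off the dependence on $b$, should yield the stated $b\rda b'=(b\ldd b')\rdd q$.

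\textbf{The main obstacle} I anticipate is the bookkeeping of left versus right implications in the \emph{non-commutative} setting: each residuation step forces a choice of side, and one must invoke the hypothesis ``$q$ cyclic'' ($q\ldd r=r\rdd q$) at precisely the right moment to convert between $\ldd$ and $\rdd$ so that the remaining quantity can be expressed purely in terms of $q$ and the data $b,b'$. It is \emph{only} cyclicity of $q$ (not full commutativity) that makes both composite formulas $b\ldd(c\rdd q)=(q\ldd b)\rdd c$ available and makes the implications collapse to the clean symmetric expressions stated. I would therefore carry out each chain of equivalences keeping the two sides rigorously distinct and flag each use of cyclicity explicitly, which is where the verification is delicate but not deep.
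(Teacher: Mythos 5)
Your high-level plan (pin down $b'\lda b$ and $b\rda b'$ via the defining adjunction, unwinding the reversed order $\preceq$) is exactly the paper's, but the computation you outline for $\lda$ contains a genuine error, and the step where all the content lies is asserted rather than proved. By \eqref{imp-def}, the condition $b'\leq b\ldd(c\rdd q)$ is equivalent to $b'\with(c\rdd q)\leq b$, \emph{not} to $b'\with b\leq c\rdd q$; you have read $b\ldd(c\rdd q)$ as if it were $(c\rdd q)\ldd b$. Moreover, even granting your intermediate condition $c\with b'\with b\leq q$, residuating it isolates $c$ as an \emph{upper} bound in $\sQ$, namely $c\leq q\ldd(b'\with b)$, whereas the adjunction $c\bullet b\preceq b'\iff c\preceq b'\lda b$ demands, after unwinding $\preceq$, a \emph{lower} bound of the form $q\ldd(b'\rdd b)\leq c$ in $\sQ$. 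The two can never match: your condition becomes easier as $c$ decreases in $\sQ$, while the correct condition $b'\leq c\bullet b$ becomes easier as $c$ increases. So the claim that ``one reaches $c\geq q\ldd(b'\rdd b)$'' cannot be extracted from your intermediate condition; this is precisely the point at which you defer to an unspecified computation. (Also, cyclicity is not what makes the two presentations of the composite available: Equation \eqref{back-diagonal-comp-def} holds for arbitrary back diagonals in any quantale.)

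Beyond this, two ingredients of the actual proof are missing from your sketch, and they are not mere bookkeeping. (i) The paper works with the presentation $c\bullet b=(q\ldd b)\rdd c$, so that $b'\leq(q\ldd b)\rdd c\iff(q\ldd b)\with b'\leq c$ puts the variable $c$ on the large side from the outset; but since $(q\ldd b)\with b'$ need not be a back diagonal, residuation alone does not finish. The key step is a closure argument that uses the back-diagonal equations of the elements themselves: for $c\in\BsQ(q,q)$,
\[(q\ldd b)\with b'\leq c\iff q\ldd(((q\ldd b)\with b')\rdd q)\leq c\]
because $c=q\ldd(c\rdd q)$, and then
\[((q\ldd b)\with b')\rdd q=b'\rdd((q\ldd b)\rdd q)=b'\rdd b\]
because $b=(q\ldd b)\rdd q$. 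No step of this kind appears in your proposal. (ii) The equivalence $c\bullet b\preceq b'\iff q\ldd(b'\rdd b)\leq c$ identifies $b'\lda b$ with $q\ldd(b'\rdd b)$ only \emph{provided} the latter lies in $\BsQ(q,q)$; otherwise it is merely a lower bound of the relevant set rather than its minimum. This membership check is exactly where cyclicity of $q$ enters: it gives $q\ldd(b'\rdd b)=(b'\rdd b)\rdd q$, and an element that is simultaneously of the form $q\ldd x$ and of the form $y\rdd q$ satisfies both equations of \eqref{back-diagonal-def}. Your proposal never verifies membership and locates the role of cyclicity elsewhere. The same two gaps recur in your $\rda$ sketch.
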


\begin{proof}
It is clear that $q\ldd(b'\rdd b)\in\BsQ(q,q)$ by Equation \eqref{back-diagonal-def}, and
\begin{align*}
b''\bullet b\preceq b'&\iff b'\leq b''\bullet b=(q\ldd b)\rdd b''&(\text{Equation \eqref{back-diagonal-comp-def}})\\
&\iff (q\ldd b)\with b'\leq b''\\
&\iff q\ldd(((q\ldd b)\with b')\rdd q)\leq b''&(b''=q\ldd(b''\rdd q))\\
&\iff q\ldd(b'\rdd((q\ldd b)\rdd q))\leq b''\\
&\iff q\ldd(b'\rdd b)\leq b''&(b=(q\ldd b)\rdd q)\\
&\iff b''\preceq q\ldd(b'\rdd b)
\end{align*}
for all $b''\in\BsQ(q,q)$. Thus $b'\lda b=q\ldd(b'\rdd b)$. Similarly we obtain $b\rda b'=(b\ldd b')\rdd q$.
\end{proof}

\begin{prop} \label{BQq-Girard}
If $q\in\sQ$ is a cyclic element, then $\BsQ(q,q)$ is a Girard quantale. In particular, if $\sQ$ is a commutative quantale, then $\BsQ(q,q)$ is a Girard quantale for all $q\in\sQ$.
\end{prop}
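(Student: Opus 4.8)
The plan is to exhibit an explicit cyclic dualizing element of the quantale $\BsQ(q,q)$. Guided by the fact that in a Girard quantale the dualizing element is the linear negation of the unit, and since the unit of $\BsQ(q,q)$ is $q$ itself (Lemma~\ref{back-diagonal-comp}), I take
\[
m:=q\ldd q=q\rdd q,
\]
the two expressions agreeing because $q$ is cyclic. Writing $\nu(b):=q\ldd b=b\rdd q$, I first record that for a cyclic $q$ the defining condition \eqref{back-diagonal-def} at $p=q$ collapses to $b=\nu^{2}(b)$, so $\nu$ restricts to an involution of $\BsQ(q,q)$ (in particular $m=\nu(q)\in\BsQ(q,q)$). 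Reading the two equalities $b=q\ldd\nu(b)$ and $b=\nu(b)\rdd q$ off $b=\nu^{2}(b)$ yields the two absorption inequalities $b\with\nu(b)\leq q$ and $\nu(b)\with b\leq q$, which will drive everything.

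The key claim is that the negation attached to $m$ is exactly $\nu$; that is, $m\lda b=\nu(b)=b\rda m$ for every $b\in\BsQ(q,q)$. Granting this, $m$ is cyclic (its left and right divisions coincide with $\nu$), and it is dualizing because $N(b):=m\lda b=\nu(b)$ satisfies $N^{2}=\nu^{2}=\id$ on $\BsQ(q,q)$, whence $(m\lda b)\rda m=\nu(\nu(b))=b=m\lda(b\rda m)$. By Lemma~\ref{BQq-cyclic} one has $m\lda b=q\ldd(m\rdd b)$ and $b\rda m=(b\ldd m)\rdd q$, so, since $\nu(b)=q\ldd b=b\rdd q$, the whole claim reduces to the two identities
\[
m\rdd b=b\qquad\text{and}\qquad b\ldd m=b\qquad(b\in\BsQ(q,q)).
\]

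The inequalities $m\rdd b\leq b$ and $b\ldd m\leq b$ are immediate from $k\leq m$. The reverse inequalities are the heart of the matter and amount, by adjunction, to the two-sided absorption $m\with b\leq b$ and $b\with m\leq b$. For the first I use that $b=q\ldd\nu(b)$ is the largest $p$ with $p\with\nu(b)\leq q$, so it suffices to verify $(m\with b)\with\nu(b)\leq q$; associating and invoking $b\with\nu(b)\leq q$ reduces this to $m\with q=(q\ldd q)\with q\leq q$, which holds by the definition of $q\ldd q$. Dually, $b\with m\leq b$ follows from $b=\nu(b)\rdd q$ being the largest $y$ with $\nu(b)\with y\leq q$, together with $(\nu(b)\with b)\with m\leq q\with m$ and $q\with m=q\with(q\rdd q)\leq q$. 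The genuinely delicate point is precisely this: in the non-commutative case the two absorptions sit on opposite sides, and closing the gap forces me to pair the \emph{left} defining equality of a back diagonal with the \emph{right} absorption $q\with(q\rdd q)\leq q$ and vice versa; it is cyclicity of $q$ — letting me present $m$ as either $q\ldd q$ or $q\rdd q$ and supply whichever handedness is required — that makes the two halves fit. With both identities established, $m$ is a cyclic dualizing element and $\BsQ(q,q)$ is a Girard quantale. The final assertion is then immediate, since in a commutative $\sQ$ every element is cyclic, so the construction applies to every $q\in\sQ$.
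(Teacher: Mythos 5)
Your proof is correct and follows essentially the same route as the paper's: both exhibit $m_q:=q\ldd q=q\rdd q$ as the cyclic dualizing element, reduce everything via Lemma~\ref{BQq-cyclic} to the identities $m_q\rdd b=b=b\ldd m_q$ in $\sQ$ (so that the induced negation on $\BsQ(q,q)$ is $b\mapsto q\ldd b=b\rdd q$), and then read off cyclicity and dualizability from the fact that this negation is involutive on back diagonals. The only difference is cosmetic: you establish the two key identities by adjunction and the absorption inequalities $b\with\nu(b)\leq q$, $\nu(b)\with b\leq q$, whereas the paper derives them by a direct chain of residuation identities.
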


\begin{proof}
We show that $m_q:=q\ldd q=q\rdd q$ is a cyclic dualizing element of the quantale $\BsQ(q,q)$.

First, $m_q$ is cyclic. For any $b\in\BsQ(q,q)$, the conjunction of
\begin{align*}
b\ldd m_q&=b\ldd(q\rdd q)\\
&=((q\ldd b)\rdd q)\ldd(q\rdd q)&(b=(q\ldd b)\rdd q) \\
&=(q\ldd b)\rdd(q\ldd(q\rdd q))\\
&=(q\ldd b)\rdd q\\
&=b \\
&=q\ldd(b\rdd q) \\
&=((q\ldd q)\rdd q)\ldd(b\rdd q)\\
&=(q\ldd q)\rdd(q\ldd(b\rdd q))\\
&=(q\ldd q)\rdd b &(b=q\ldd(b\rdd q))\\
&=m_q\rdd b
\end{align*}
and Lemma \ref{BQq-cyclic} yields that
\begin{equation} \label{b-imp-mq}
m_q\lda b=q\ldd(m_q\rdd b)=q\ldd b=b\rdd q=(b\ldd m_q)\rdd q=b\rda m_q.
\end{equation}

Second, $m_q$ is dualizing. Since for any $b\in\BsQ(q,q)$,
\begin{align*}
m_q\lda(b\rda m_q)&=q\ldd(b\rdd q)&(\text{Equation \eqref{b-imp-mq}})\\
&=(q\ldd b)\rdd q \\
&=(m_q\lda b)\rda m_q,&(\text{Equation \eqref{b-imp-mq}})
\end{align*}
the conclusion thus follows.
\end{proof}

\begin{rem}
For a cyclic element $q$ of a quantale $\sQ$, it is shown in \cite{Rosenthal1990a} that
\[\sj=((-)\rdd q)\rdd q:\sQ\to\sQ\]
is a \emph{nucleus} \cite{Rosenthal1990} on $\sQ$, and the resulting quotient quantale
\[\sQ_{\sj}=(\sQ_{\sj},\with_{\sj},q\rdd q)\]
is a Girard quantale (see \cite[Theorem 3.1.1]{Rosenthal1990} for the construction of $\sQ_{\sj}$). As we will see below, the Girard quantale $\BsQ(q,q)$ obtained in Proposition \ref{BQq-Girard} is isomorphic to $\sQ_{\sj}$.

Let $\sR=(\sR,\with,k)$ be a Girard quantale with a cyclic dualizing element $m$. Then from $\sR$ we may construct another Girard quantale $\sR^d=(\sR^d,\with^d,m)$, which is isomorphic to $\sR$ with the correspondence $q\mapsto q^{\perp}$ being an isomorphism of quantales:
\begin{itemize}
\item elements of $\sR^d$ are the same as those of $\sR$, and $\sR^d$ is equipped with the \emph{reversed} order of $\sR$;
\item the multiplication on $\sR^d$ is defined by
    \begin{equation} \label{Girard-dual-comp}
    p\with^d q=(p^{\perp}\with q^{\perp})^{\perp}=q\ldd p^{\perp}=q^{\perp}\rdd p
    \end{equation}
    for all $p,q\in\sR^d$;
\item the unit of $\sR^d$ is the cyclic dualizing element $m$ of $\sR$, and the unit $k$ of $\sR$ is a cyclic dualizing element of $\sR^d$.
\end{itemize}

Now we show that $\sQ_{\sj}$ is isomorphic to $(\BsQ(q,q))^d$, and hence to $\BsQ(q,q)$. To see this, just note that the underlying sets of $\sQ_{\sj}$ and $\BsQ(q,q)$ are the same, and
\begin{align*}
b\with_{\sj}c&=\sj(b\with c)&\text{(see \cite[Theorem 3.1.1]{Rosenthal1990})}\\
&=((b\with c)\rdd q)\rdd q\\
&=(c\rdd(b\rdd q))\rdd q\\
&=q\ldd(c\rdd(b\rdd q))&(q\ \text{is cyclic})\\
&=c\lda(b\rdd q)&(\text{Lemma \ref{BQq-cyclic}})\\
&=c\lda(m_q\lda b)&(\text{Equation \eqref{b-imp-mq}})\\
&=b\bullet^d c&(\text{Equation \eqref{Girard-dual-comp}})
\end{align*}
for all $b,c\in\sQ_{\sj}$.
\end{rem}

Note that every $q\in\sQ$ satisfies
\[(q\ldd k)\with k=q=k\with(k\rdd q);\]
that is, $q\in\DsQ(k,k)$ for all $q\in\sQ$. Moreover,
\[p\diamond q=(p\ldd k)\with q=p\with(k\rdd q)=p\with q\]
for all $p,q\in\DsQ(k,k)$. Hence, $\sQ$ and $\DsQ(k,k)$ are the same quantales, upon which the proof of Theorem \ref{DQ-BQ-Girard} is obtained:

\begin{proof}[Proof of Theorem \ref{DQ-BQ-Girard}]
The ``if'' part is already obtained in Theorem \ref{Girard-iso}. For the ``only if'' part, note that
\[\sQ=\DsQ(k,k).\]
Hence, the isomorphism $\BsQ\cong\DsQ$ guarantees that $\sQ\cong\BsQ(q,q)$ for some $q\in\sQ$. Since $\sQ$ is commutative, Proposition \ref{BQq-Girard} ensures that $\BsQ(q,q)$ is a Girard quantale, and therefore so is $\sQ$.
\end{proof}

Since $\DsQ\cong\HsQ$ and $\BsQ\cong\KsQ$ when $\sQ$ is integral, the following corollary is an immediate consequence of Theorem \ref{DQ-BQ-Girard}:

\begin{cor} \label{DQ-BQ-Girard-integral}
Let $\sQ$ be a commutative and integral quantale. Then there is an isomorphism
\[\HsQ\cong\KsQ\]
of quantaloids if, and only if, $\sQ$ is a Girard quantale.
\end{cor}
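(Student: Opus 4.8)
The plan is to deduce Corollary \ref{DQ-BQ-Girard-integral} directly from Theorem \ref{DQ-BQ-Girard} by reducing the isomorphism $\HsQ\cong\KsQ$ to the isomorphism $\DsQ\cong\BsQ$. The key observation is the one already recorded in the excerpt: when $\sQ$ is integral, the strictness-type constraints defining the subquantaloids become automatic. Specifically, for integral $\sQ$ the condition $d\leq p\wedge q$ is a consequence of the diagonal equations $(d\ldd p)\with p=d=q\with(q\rdd d)$, so that $\HsQ=\DsQ$; dually, $p\vee q\leq b$ follows from the back-diagonal equations $p\ldd(b\rdd p)=b=(q\ldd b)\rdd q$, so that $\KsQ=\BsQ$. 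Both facts are stated explicitly earlier in the text, immediately after the definitions of $\HsQ$ and $\KsQ$.

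Given these identifications, the proof is essentially a one-line argument. First I would invoke integrality to rewrite the two subquantaloids as the full quantaloids of diagonals and back diagonals, namely $\HsQ=\DsQ$ and $\KsQ=\BsQ$. Then the desired equivalence
\[\HsQ\cong\KsQ\iff\sQ\ \text{is a Girard quantale}\]
becomes literally the equivalence
\[\DsQ\cong\BsQ\iff\sQ\ \text{is a Girard quantale},\]
which is precisely the content of Theorem \ref{DQ-BQ-Girard} under the standing hypothesis that $\sQ$ is commutative. Since the corollary's hypotheses are that $\sQ$ is both commutative and integral, Theorem \ref{DQ-BQ-Girard} applies verbatim, and the conclusion follows.

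There is essentially no obstacle here, as the corollary is a specialization rather than a new result; the only point requiring care is to confirm that the identifications $\HsQ=\DsQ$ and $\KsQ=\BsQ$ hold as \emph{involutive quantaloids} and not merely as ordinary quantaloids, so that the isomorphisms transport correctly. Both identifications are equalities of quantaloids on the nose (same objects, same hom-sets, same composition, same involution inherited from $\sQ$), so this is immediate. I would therefore simply write that the corollary is an immediate consequence of Theorem \ref{DQ-BQ-Girard} together with the equalities $\DsQ=\HsQ$ and $\BsQ=\KsQ$ that hold whenever $\sQ$ is integral, without any further computation.
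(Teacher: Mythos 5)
Your proposal is correct and follows exactly the paper's own argument: the paper likewise notes that integrality gives $\HsQ=\DsQ$ and $\KsQ=\BsQ$ (both facts recorded right after the definitions of these subquantaloids) and then cites Theorem \ref{DQ-BQ-Girard} for commutative $\sQ$. Your extra remark about the identifications being equalities on the nose is a fine (if unnecessary) precaution, since the corollary only concerns isomorphism of plain quantaloids.
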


\section*{Acknowledgement}

The first, the second and the fourth named authors acknowledge the support of National Natural Science Foundation of China (No. 11771310, No. 11701396 and No. 11871358).

The authors thank the referees gratefully for their valuable comments and suggestions which help improve the paper significantly.





\end{document}